\newtheorem{theorem}{Theorem}[section]
\newtheorem{lemma}[theorem]{Lemma}
\newtheorem{proposition}[theorem]{Proposition}
\newtheorem{corollary}[theorem]{Corollary}
\newcommand{\parder}[3][Default]{
	\frac{\partial \ifthenelse{\equal{#1}{Default}}{}{^{#1}}#2}{
              \partial #3 \ifthenelse{\equal{#1}{Default}}{}{^{#1}}}}
\numberwithin{equation}{section}
\newcommand{\nolisttopbreak}{\vspace{\topsep}\nobreak\@afterheading}
\newenvironment{listproof}[1][\proofname]{\begin{proof}[#1]\mbox{}\nolisttopbreak}{\end{proof}}
\newcommand{\oth}{\textsuperscript{th}\ }
\newcommand{\jac}{{\mathcal{J}}}
\newcommand{\Mat}{\operatorname{Mat}}
\newcommand{\ie}{\operatorname{IE}}
\newcommand{\pe}{\operatorname{PE}}
\newcommand{\GL}{\operatorname{GL}}
\newcommand{\rk}{\operatorname{rk}}
\newcommand{\cork}{\operatorname{cork}}
\newcommand{\N}{\mathbb{N}}
\newcommand{\tr}{\operatorname{tr}}
\newcommand{\trdeg}{\operatorname{trdeg}}
\newcommand{\chr}{\operatorname{char}}
\newcommand{\tp}{^{\rm t}}
\newcommand{\sdots}{\vdots\,\vdots\,\vdots}
\newcommand{\zeromat}{\mbox{\begin{tikzpicture} \useasboundingbox (-0.1,-0.125) -- (0.1,0.125);
\draw (-0.05,0.05) -- node[anchor=center]{\rm 0} (0.05,-0.05); \end{tikzpicture}}}
\begin{document}

\title{Computations of Keller maps over fields with {\mathversion{normal}$\tfrac16$}}

\author{Michiel de Bondt}

\maketitle

\begin{abstract}
\noindent
We classify Keller maps $x + H$ in dimension $n$ over fields with $\tfrac16$,
for which $H$ is homogeneous, and
\begin{enumerate}[\upshape (1)]

\item $\deg H = 3$ and $\rk \jac H \le 2$;

\item $\deg H = 3$ and $n \le 4$;

\item $\deg H = 4$ and $n \le 3$;

\item $\deg H = 4 = n$ and $H_1, H_2, H_3, H_4$ are linearly dependent over $K$.

\end{enumerate}
In our proof of these classifications, we formulate (and prove) several results
which are more general than needed for these classifications. One of these 
results is the classification of all homogeneous polynomial maps $H$ as in (1)
over fields with $\tfrac16$.
\end{abstract}

\section{Introduction}

Let $n$ be a positive integer and let $x = (x_1,x_2,\ldots,x_n)$ be an $n$-tuple
of variables. 
We write $a|_{b=c}$ for the result of substituting $b$ by $c$ in $a$.

Let $K$ be any field. In the scope of this introduction, denote
by $L$ an unspecified (but big enough) field, which contains $K$ or even $K(x)$.

For a polynomial or rational map $H = (H_1,H_2,\ldots,H_m) \in L^m$, write $\jac H$ or 
$\jac_x H$ for the Jacobian matrix of $H$ with respect to $x$. So
$$
\jac H = \jac_x H = \left(\begin{array}{cclc}
\parder{}{x_1} H_1 & \parder{}{x_2} H_1 & \cdots & \parder{}{x_n} H_1 \\
\parder{}{x_1} H_2 & \parder{}{x_2} H_2 & \cdots & \parder{}{x_n} H_2 \\
\vdots & \vdots & \sdots & \vdots \\
\parder{}{x_1} H_m & \parder{}{x_2} H_m & \cdots & \parder{}{x_n} H_m
\end{array}\right)
$$

Denote by $\rk M$ the rank of a matrix $M$, whose entries are contained in $L$,
and write $\trdeg_K L$ for the transcendence degree of $L$ over $K$. 
It is known that $\rk \jac H \le \trdeg_K K(H)$ for a rational map $H$ of any 
degree, with equality if $K(H) \subseteq K(x)$ is separable, in particular if
$K$ has characteristic zero. This is proved in \cite[Th.\@ 1.3]{1501.06046},
see also \cite[Ths.\@ 10, 13]{DBLP:conf/mfcs/PandeySS16}.

Let a \emph{Keller map} be a polynomial map $F \in K[x]^n$, for
which $\det \jac F \in K \setminus \{0\}$. 
If $H \in K[x]^n$ is homogeneous of degree at least $2$, then $x + H$ is
a Keller map, if and only if $\jac H$ is nilpotent. 

We say that a matrix $M \in \Mat_n(L)$ is \emph{similar over $K$}
to a matrix $\tilde{M} \in \Mat_n(L)$, if there exists a $T \in \GL_n(K)$ such that
$\tilde{M} = T^{-1}MT$. If $\jac H$ is similar over $K$ to a triangular 
matrix, say that $T^{-1}(\jac H)T$ is a triangular matrix, then
$$
\jac \big(T^{-1}H(Tx)\big) = T^{-1}(\jac H)|_{x=Tx}T
$$ 
is triangular as well.

All sections focus on \emph{cubic homogeneous polynomial maps}, except the last one, which
is about \emph{quartic polynomial maps}.

\section{Calculations for rank 2}

\begin{theorem} \label{detdep}
Let $s$ and $d$ be positive integers, such that $s \le n$. Take
$$
\tilde{x} := (x_1,x_2,\ldots,x_s) \qquad \mbox{and} \qquad
L := K(x_{s+1},x_{s+2},\ldots,x_n)
$$
In order to prove that for (homogeneous) polynomial maps
$H \in K[x]^m$ of degree $d$,
\begin{equation} \label{rkimptrdeg}
\rk \jac H = r \mbox{ implies } \trdeg_K K(H) = r
\qquad \mbox{for every } r < s
\end{equation}
it suffices to show that for (homogeneous) polynomial maps
$\tilde{H} \in L[\tilde{x}]^s$ of degree $d$,
$$
\det \jac_{\tilde{x}} \tilde{H} = 0 \qquad \mbox{implies} \qquad
f(\tilde{H}) = 0 \mbox{ for some } 
f \in L[y_1,y_2,\ldots,y_s] \setminus \{0\}
$$
\end{theorem}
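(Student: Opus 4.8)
The plan is: assuming the displayed implication for $s\times s$ Jacobians over $L$, to derive \eqref{rkimptrdeg}; the argument is by contradiction. So let $H\in K[x]^m$ be of degree $d$ with $\rk\jac H=r<s$. Since $\rk\jac H\le\trdeg_K K(H)$ always holds, it suffices to exclude $\trdeg_K K(H)\ge r+1$. Assume the latter. A transcendence basis of $K(H)$ over $K$ can be found among the $H_i$, so, after reindexing the components of $H$, we may assume $H_1,\ldots,H_{r+1}$ are algebraically independent over $K$; put $G:=(H_1,\ldots,H_{r+1})$. Then $\jac_x G$ is a submatrix of $\jac_x H$, so $\rk\jac_x G\le r$, while $G$ is algebraically independent over $K$ — these are the only properties of $G$ I shall use. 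The goal is to manufacture from $G$ a map $\tilde H\in L[\tilde x]^s$ of degree $d$ with $\det\jac_{\tilde x}\tilde H=0$ that is nonetheless algebraically \emph{independent} over $L$, which contradicts the hypothesis.

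Two modifications are needed: bringing the number of variables down from $n$ to $s$, and the number of components up from $r+1$ to $s$. For the first, I restrict $G$ to a generic $s$-dimensional linear subspace through the origin. Choose linear forms $\beta_i=\sum_{j=1}^{s}b_{ij}x_j$ ($s<i\le n$) with coefficients $b_{ij}\in L$ in sufficiently general position — possible because $L$ is infinite when $n>s$, and when $n=s$ there is nothing to do and $L=K$ — and set $\hat G_k:=G_k|_{x_{s+1}=\beta_{s+1},\ldots,x_n=\beta_n}\in L[\tilde x]$. Since every variable is replaced by a degree-one form in $\tilde x$, each $\hat G_k$ remains homogeneous of degree $d$ (of degree at most $d$ in the non-homogeneous case). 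By the chain rule $\jac_{\tilde x}\hat G$ is $\jac_x G$, specialized at $x_i=\beta_i$, times a constant $n\times s$ matrix on the right, so $\rk\jac_{\tilde x}\hat G\le\rk\jac_x G\le r$; this step needs no genericity. For the second, I use that $x_1^d,\ldots,x_s^d$ form a transcendence basis of $L(\tilde x)$ over $L$, so the exchange property of transcendence bases yields indices $j_1<\cdots<j_{s-r-1}$ for which $\hat G_1,\ldots,\hat G_{r+1},x_{j_1}^d,\ldots,x_{j_{s-r-1}}^d$ are algebraically independent over $L$. Let $\tilde H$ be this $s$-tuple. Each entry is homogeneous of degree $d$, and in the $s\times s$ matrix $\jac_{\tilde x}\tilde H$ the first $r+1$ rows span a space of dimension $\le r$ while the remaining $s-r-1$ rows add at most $s-r-1$ more, so $\rk\jac_{\tilde x}\tilde H\le s-1$ and hence $\det\jac_{\tilde x}\tilde H=0$. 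But $\tilde H$ is algebraically independent over $L$, so no nonzero $f\in L[y_1,\ldots,y_s]$ satisfies $f(\tilde H)=0$, contradicting the hypothesis. Therefore $\trdeg_K K(H)=r$.

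The routine parts are the reduction to $r+1$ components, the rank estimate for the specialized Jacobian, and the row count for the determinant; homogeneity costs nothing because every substituted form and every padding term ($\beta_i$, $x_j^d$) is homogeneous. The one genuinely delicate point — the main obstacle — is that the linear restriction preserves the algebraic independence of $G_1,\ldots,G_{r+1}$. This rests on the geometric fact that a dominant morphism stays dominant after restriction of its source to a generic linear subspace of dimension at least the dimension of its target (here that dimension is $r+1$, and $s\ge r+1$ supplies exactly the slack needed), together with the observation that this genericity can be arranged with coefficients $b_{ij}$ in $L$ itself, since $L$ is infinite whenever $n>s$; algebraic independence of $\hat G$ over $\overline{L}$ then descends to algebraic independence over $L$ because $\overline{L}/L$ is algebraic.
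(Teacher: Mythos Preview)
Your proof is correct and shares the paper's overall architecture: argue by contradiction, reduce the number of effective variables from $n$ to $s$ by a linear substitution, pad with powers $x_j^d$ via Steinitz--Mac Lane exchange, and bound the rank of the resulting $s\times s$ Jacobian by the chain rule.  The one substantive divergence is in the variable-reduction step.  You substitute $x_i\mapsto\beta_i$ for $i>s$ with \emph{generic} linear forms $\beta_i$ over $L$, and justify preservation of algebraic independence by the Bertini-type principle that a dominant morphism remains dominant on a generic linear section of dimension at least that of the target, together with density of $L$-points in the parameter space.  The paper instead makes the \emph{specific} choice $\beta_i = x_1\cdot x_i$ (reading $x_i\in L$ for $i>s$ as a scalar), and replaces your geometric appeal by an elementary degree count: $\deg_{x_1} H_1(x_1,x_1x_2,\ldots,x_1x_n)=\deg H_1$, so this substituted polynomial is transcendental over $K(x_2,\ldots,x_n)$, after which Steinitz exchange arranges that the discarded variables are exactly $x_{s+1},\ldots,x_n$.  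The paper also pads first (over $K$, all the way to $s$ components) and substitutes afterwards, whereas you substitute first and pad over $L$; either order works.  Your route is conceptually transparent but leans on an unproved (though standard) statement about generic linear sections; the paper's route is fully self-contained and sidesteps any question of constructibility or characteristic, at the cost of a slightly trickier bookkeeping with the specific substitution.
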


\begin{proof}
Suppose that $H \in K[x]^m$ is (homogeneous) of degree $d$, such that
\eqref{rkimptrdeg} does not hold. Then there exists an $r < s$ such that
$\rk \jac H = r < \trdeg_K K(H)$.

We first show that we may assume that $\trdeg_K K(H) = s = m$.
For that purpose, take $s' := \trdeg_K K(H)$ and assume without 
loss of generality that $H_1, H_2, \ldots, H_{s'}$ are 
algebraically independent over $K$. Assume again without loss 
of generality that the components of
$$
H' := \big(H_1, H_2, \ldots, H_{s'}, 
      x_{s'+1}^d, x_{s'+2}^d,\ldots,x_s^d\big)
$$      
are algebraically independent over $K$. Since
$$
\rk \jac H' \le r + (s - s') < s = \trdeg_K K(H')
$$
we deduce that \eqref{rkimptrdeg} remains unsatisfied if we replace
$H$ by $H'$. So we may assume that $\trdeg_K K(H) = s = m$.

So assume that the components $H_1, H_2, \ldots, H_s$ of $H$ are algebraically
independent over $K$. One can easily verify that
$$
\deg_{x_1} H_1(x_1,x_1x_2,x_1x_3,\ldots,x_1x_n) = \deg H_1
$$
so $H_1(x_1,x_1x_2,x_1x_3,\ldots,x_1x_n)$ is algebraically
independent over $K$ of $x_2,x_3,\allowbreak\ldots,x_n$.
On account of the Steinitz Mac Lane exchange lemma, we may assume without
loss of generality that the components of
$$
\big(H(x_1,x_1x_2,x_1x_3,\ldots,x_1x_n),x_{s+1},x_{s+2},\ldots,x_n\big)
$$
are algebraically independent over $K$. Then the components of
$H(x_1,x_1x_2,x_1x_3,\allowbreak\ldots,x_1x_n)$ are algebraically independent over 
$L = K(x_{s+1},x_{s+2},\ldots,x_n)$, and so are the components of 
$$
\tilde{H} := H(x_1,x_2,\ldots,x_s, 
x_1x_{s+1},x_1x_{s+2},\ldots,x_1x_n)
$$
Consequently, there does not exist an $f \in L[y_1,y_2,\ldots,y_s] \setminus \{0\}$
such that $f(\tilde{H}) = 0$.

Let $G := (x_1,x_2,\ldots,x_s, x_1x_{s+1},x_1x_{s+2},\ldots,x_1x_n)$.
Then it follows from the chain rule that 
$$
\jac_{\tilde{x}} \tilde{H} = (\jac H)|_{x = G} \cdot \jac_{\tilde{x}} G
$$
so the column space of $\jac_{\tilde{x}} \tilde{H}$ is contained in that
of $(\jac H)|_{x = G}$. From $\rk \jac H < s$, we deduce that 
$\det \jac_{\tilde{x}} \tilde{H} = 0$. 

So $\tilde{H}$ does not satisfy the last line of theorem \ref{detdep}, 
and our reduction is complete.
\end{proof}

\begin{theorem} \label{rkform}
Let $H \in K[x]^m$ be a polynomial map of degree $d$ and $r := \rk \jac H$.
\begin{enumerate}[\upshape (i)]

\item If the cardinality of $K$ exceeds $(d-1)r$ and
$$
\jac H \cdot x = 0
$$
then there are $S \in GL_m(K)$ and $T \in \GL_n(K)$, 
such that for $\tilde{H} := SH(Tx)$,
$$
\tilde{H}|_{x=e_{r+1}} = \left( \begin{array}{cc}
I_r & \zeromat \\ \zeromat & \zeromat                         
\end{array} \right)
$$

\item If the cardinality of $K$ exceeds $(d-1)r + 1$ and
$$
\jac H \cdot x \ne 0
$$
then there are $S \in GL_m(K)$ and $T \in \GL_n(K)$, 
such that for $\tilde{H} := SH(Tx)$,
$$
\tilde{H}|_{x=e_1} = \left( \begin{array}{cc}
I_r & \zeromat \\ \zeromat & \zeromat                         
\end{array} \right)
$$

\end{enumerate}
Furthermore, the cardinality of $K$ may be one less (i.e.\@ at least
$(d-1)r$ or $(d-1)r + 1$ respectively) if every nonzero component of 
$H$ is homogeneous.
\end{theorem}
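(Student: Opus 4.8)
The plan is to reduce both parts to a single point‑evaluation together with one piece of elementary linear algebra. Set $D := \left(\begin{smallmatrix} I_r & \zeromat \\ \zeromat & \zeromat \end{smallmatrix}\right)$, viewed as an $m\times n$ matrix. Writing $\tilde H = SH(Tx)$, the chain rule gives $\jac\tilde H = S\cdot(\jac H)|_{x=Tx}\cdot T$, so with $p := Te_j$ (the $j$‑th column of $T$) and $A := (\jac H)|_{x=p}\in\Mat_{m\times n}(K)$ we have $(\jac\tilde H)|_{x=e_j} = SAT$, where $j = r+1$ in part (i) and $j = 1$ in part (ii). Since $D$ has rank $r$, and since the $(r+1)\times(r+1)$ minors of $\jac H$ vanish identically (so that $\rk A\le r$ for every $p$), the theorem reduces to two things: (a) a rank‑decomposition lemma --- given $A\in\Mat_{m\times n}(K)$ of rank $r$ and $p\in K^n$, if $Ap\ne0$ (resp.\ if $Ap=0$ and $p\ne0$, which forces $r<n$) then there are $S\in\GL_m(K)$ and $T\in\GL_n(K)$ with $p$ as the first column (resp.\ as the $(r+1)$‑th column) of $T$ and $SAT = D$; and (b) the production of a point $p$ with the right properties. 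For (a) I would extend $Ap$ (resp.\ a basis of $\ker A$ containing $p$) to suitable bases of $K^m$ and $K^n$ and simply read off the columns of $T$ and of $S^{-1}$; no hypothesis on $|K|$ is needed here.

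For part (i): here $Ap = (\jac H\cdot x)|_{x=p} = 0$ automatically, and $r<n$ (otherwise $\jac H$ has trivial kernel over $K(x)$, contradicting $\jac H\cdot x = 0$), so by (a) it is enough to find $p\ne0$ with $\rk(\jac H)|_{x=p} = r$, i.e.\ a nonzero non‑root of some fixed nonzero $r\times r$ minor $\mu$ of $\jac H$ (the case $r=0$ is trivial). Every entry of $\jac H$ has degree $\le d-1$, so $\deg\mu\le(d-1)r$; a nonzero polynomial of that degree has at least $|K|^n-(d-1)r\,|K|^{n-1}$ non‑roots in $K^n$, hence at least $|K|^{n-1}$ of them once $|K|>(d-1)r$, and since $r\ge1$ forces $n\ge2$ this count is $\ge2$, so one non‑root is nonzero. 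If moreover every nonzero component of $H$ is homogeneous, then $\mu$ is homogeneous, its non‑roots are automatically nonzero, and a short induction on $n$ shows that a nonzero homogeneous polynomial of degree $e$ already has a nonzero non‑root when $|K|\ge e$; this is exactly the promised saving of one element.

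Part (ii) is the crux. The naive approach, demanding ``$\rk(\jac H)|_{x=p}=r$'' and ``$(\jac H\cdot x)|_{x=p}\ne0$'' as two separate conditions, is too weak: non‑vanishing of a coordinate of $\jac H\cdot x$ is a condition of degree up to $d$, giving a much worse cardinality bound. Instead I would fix a nonzero $r\times r$ minor $\mu$ of $\jac H$, say the determinant of the submatrix on rows $I$ and columns $J$. Then the rows $I$ span the row space of $\jac H$ over $K(x)$, so the subvector of $\jac H\cdot x$ on rows $I$ is nonzero, and hence for some $j_0\in J$ the polynomial $P$ obtained from that submatrix by replacing its column $j_0$ with the subvector of $\jac H\cdot x$ on rows $I$ is nonzero (by Cramer's rule $P$ equals $\mu$ times a nonzero coordinate of that subvector in the basis given by the columns $J$). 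A non‑root $p$ of $P$ is automatically a nonzero point at which the $r-1$ columns of $(\jac H)|_{x=p}$ indexed by $J\setminus\{j_0\}$, together with $(\jac H)|_{x=p}\cdot p$, are linearly independent on the rows $I$, which simultaneously forces $\rk(\jac H)|_{x=p}=r$ and $(\jac H)|_{x=p}\cdot p\ne0$; so a single non‑root of $P$ suffices, and $\deg P\le(r-1)(d-1)+d = (d-1)r+1$. Thus $|K|>(d-1)r+1$ is enough; and if every nonzero component of $H$ is homogeneous, $P$ is homogeneous too --- the replaced column has entries $(\jac H\cdot x)_i=\sum_k x_k\,\partial_k H_i$, homogeneous of degree one higher than the derivative entries $\partial_k H_i$, and it contributes to exactly one row of each determinant term, so all terms have equal total degree --- so the same induction gives the improved bound $|K|\ge(d-1)r+1$.

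The one genuinely clever step, and the main obstacle, is precisely the observation in part (ii): that the rank condition and the non‑degeneracy $(\jac H)|_{x=p}\cdot p\ne0$ can be packaged into a single polynomial $P$ of degree only $(d-1)r+1$, a non‑root of which delivers both at once. Once this is in place, the decomposition lemma (a) and the counting of non‑roots over small fields (together with the fact that homogeneity of $\mu$, resp.\ of $P$, makes every non‑root nonzero, saving one element) are routine.
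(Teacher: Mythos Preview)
Your proof is correct and follows essentially the same route as the paper: in (i) you evaluate a nonzero $r\times r$ minor at a point to pin down the rank, and in (ii) you replace one column of that minor by the relevant piece of $\jac H\cdot x$ to obtain a single polynomial of degree $(d-1)r+1$ whose non-vanishing forces both conditions at once --- this is exactly the paper's determinant $a_1$. The paper outsources the existence of a (nonzero) non-root to an external lemma, whereas you supply a direct Schwartz--Zippel count and a short homogeneity argument, but the architecture and the key packaging step are identical.
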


\begin{listproof}
\begin{enumerate}[\upshape (i)]

\item 
Assume without loss of generality that 
$$
a_0 := \det \jac_{x_1,x_2,\ldots,x_r} (H_1,H_2,\ldots,H_r) \ne 0
$$
Suppose that the cardinality of $K$ exceeds $(d-1)r$.
From \cite[Lemma 5.1 (i)]{1310.7843}, it follows that there exists a
vector $w \in K^n$ such that $a_0(w) \ne 0$.
Hence
$$
r \ge \rk \big(\jac H\big)\big|_{x=w} \ge 
\rk \jac_{x_1,x_2,\ldots,x_r} \big(H_1,H_2,\ldots,H_r\big)\big|_{x=w} = r
$$
so $\rk \big(\jac H\big)\big|_{x=w} = r$. Hence there are exactly $n-r$
independent vectors $v_{r+1}, v_{r+2}, \ldots, \allowbreak v_n \in K^n$,
such that
$
\big(\jac H\big)\big|_{x=w} \cdot v_i = 0
$
for $i = r+1, r+2, \allowbreak \ldots, n$. Since
$$
\big(\jac H\big)\big|_{x=w} \cdot w = \big(\jac H \cdot x\big)\big|_{x=w} = 0
$$
we can take $v_{r+1} = w$.

Take $T \in \GL_n(K)$ of the form $(v_1\,|\,v_2\,|\,\cdots\,|\,v_n)$.
From the chain rule, we deduce that
$$
\big(\jac (H(Tx))\big)\big|_{x=e_{r+1}} \cdot e_i 
= (\jac H)|_{x=Te_{r+1}} \cdot T e_i = (\jac H)|_{x=w} \cdot v_i
$$
for every $i \le n$. In particular, 
$\rk \big(\jac (H(Tx))\big)\big|_{x=e_{r+1}} = r$ and the last
$n - r$ columns of $\big(\jac (H(Tx))\big)\big|_{x=e_{r+1}}$ are zero. 
So we can take $S \in \GL_m(K)$ such that
$$
\big(\jac (SH(Tx))\big)\big|_{x=e_{r+1}} =
S \cdot \big(\jac (H(Tx))\big)\big|_{x=e_{r+1}} =
\left( \begin{array}{cc}
I_r & \zeromat \\ \zeromat & \zeromat                         
\end{array} \right)
$$

\item Suppose that the cardinality of $K$ exceeds $(d-1)r + 1$.
Since $\jac H \cdot x \ne 0$, we may assume that
$$
\rk \Big( \jac H \cdot x \,\Big|\, 
\jac_{x_2,x_3,\ldots,x_r} H \Big) = r
$$
and that 
$$
a_1 := \det \Big( \jac (H_1,H_2,\ldots,H_r) \cdot x \,\Big|\, 
\jac_{x_2,x_3,\ldots,x_r} (H_1,H_2,\ldots,H_r) \Big) \ne 0
$$
From \cite[Lemma 5.1 (i)]{1310.7843}, it follows that there exists a
vector $w \in K^n$ such that $a_1(w) \ne 0$. 

Just as in the proof of (i), $\rk \big(\jac H\big)\big|_{x=w} = r$ 
and there are independent vectors $v_{r+1}, v_{r+2}, \ldots, v_n \in K^n$,
such that $\big(\jac H\big)\big|_{x=w} \cdot v_i = 0$
for $i = r+1, \allowbreak r+2, \ldots, n$. Since 
$\big(\jac H \cdot x\big)\big|_{x=w}$ is the first column of a matrix
whose determinant is nonzero, we deduce that
$$
\big(\jac H\big)\big|_{x=w} \cdot w = \big(\jac H \cdot x\big)\big|_{x=w} \ne 0
$$
so $v_1 := w$ is independent of $v_{r+1}, \allowbreak v_{r+2}, \ldots, v_n$.

Take $T \in \GL_n(K)$ of the form $(v_1\,|\,v_2\,|\,\cdots\,|\,v_n)$.
From the chain rule, we deduce that
$$
\big(\jac (H(Tx))\big)\big|_{x=e_1} \cdot e_i 
= (\jac H)|_{x=Te_1} \cdot T e_i = (\jac H)|_{x=w} \cdot v_i
$$
for every $i \le n$. The rest of the proof of (ii) is similar to
that of (i). 

\end{enumerate}
The last claim follows from \cite[Lemma 5.1 (ii)]{1310.7843}, as an
improvement to \cite[Lemma 5.1 (i)]{1310.7843}.
\end{listproof}

\begin{lemma} \label{trdegle2}
Let $H \in K[x]^m$ be cubic homogeneous and suppose that $\trdeg_K \allowbreak 
K(H) = r \le 2$. 

Then there are $S \in GL_m(K)$ and $T \in \GL_n(K)$, 
such that for $\tilde{H} := SH(Tx)$, one of the following holds:
\begin{enumerate}[\upshape (1)]

\item $\tilde{H}_{r+1} = \tilde{H}_{r+2} = \cdots = \tilde{H}_m = 0$,

\item $r=2$ and $\tilde{H} \in K[x_1,x_2]^m$,

\item $r=2$ and $K \tilde{H}_1 + K \tilde{H}_2 + \cdots + K \tilde{H}_m
= K x_3 x_1^2 \oplus K x_3 x_1 x_2 \oplus K x_3 x_2^2$.

\end{enumerate}
Furthermore, we can take $S = T^{-1}$ if $m = n$.
\end{lemma}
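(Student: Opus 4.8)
The plan is to study $V := \operatorname{span}_K(H_1, H_2, \ldots, H_m)$, the $K$-span of the (cubic form) components; since $K(V) = K(H)$ we have $\trdeg_K K(V) = r$, applying $S \in \GL_m(K)$ replaces the tuple by one with the same span, and applying $T \in \GL_n(K)$ to the variables replaces $V$ by $\{f(Tx) : f \in V\}$, of the same dimension. The cases $r \le 1$ are routine. If $r = 0$, each $H_i$ is a cubic form algebraic over $K$, hence $0$, and (1) holds. If $r = 1$, take $H_j \ne 0$; the algebraic relation between $H_j$ and any other $H_i$ can be taken homogeneous by a degree argument, hence splits into linear factors over $\bar K$, forcing $H_i/H_j \in \bar K \cap K(x) = K$. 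Thus $V = K H_j$ and (1) follows with a suitable $S \in \GL_m(K)$; when $m = n$ one instead takes $T \in \GL_n(K)$ whose first column is the common direction of $H_1, \ldots, H_n$ and sets $S = T^{-1}$.

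For $r = 2$: if $\dim_K V \le 2$ then $V = Kf + Kg$ with $f, g$ algebraically independent, and writing $H = f\cdot a + g\cdot b$ with $a, b \in K^m$ independent, (1) holds by choosing $S \in \GL_m(K)$ with $Sa = e_1$, $Sb = e_2$; when $m = n$ one instead picks $T \in \GL_n(K)$ whose first two columns span $Ka + Kb$ and sets $S = T^{-1}$, so that $T^{-1}H(Tx) = f(Tx)(T^{-1}a) + g(Tx)(T^{-1}b)$ has vanishing last $n-2$ components. So assume $k := \dim_K V \ge 3$ and let $\phi = [f_1 : \cdots : f_k] \colon \mathbb{P}^{n-1} \dashrightarrow \mathbb{P}^{k-1}$ for a $K$-basis $f_1, \ldots, f_k$ of $V$. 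Since $\trdeg_K K(V) = 2$, the closure $\Gamma$ of the image of $\phi$ is an irreducible curve, and since the $f_i$ are $K$-independent, $\Gamma$ is non-degenerate, so $\deg \Gamma \ge k-1$. Restricting $\phi$ to a general line $L \cong \mathbb{P}^1$ gives a dominant morphism onto $\Gamma$ defined by binary forms of degree $\le 3$, separable since $\chr K \ne 2, 3$; hence $\deg \Gamma \le 3$, so $k \le 4$, and $\Gamma$ is a twisted cubic ($k = 4$), or a plane cubic or a smooth conic ($k = 3$).

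In each case the parametrization $\mathbb{P}^1 \to \Gamma$ — defined over $K$ after a $\GL_k(K)$-change of basis, by a descent valid over any field (forms of $\mathbb{P}^1$ over a finite field are trivial, and over an infinite field one uses a rational point of $\Gamma$ in the image of $\phi$) — pulls back through $\phi$ to a rational map $[p : q] \colon \mathbb{P}^{n-1} \dashrightarrow \mathbb{P}^1$ over $K$. Clearing the common factor of $p, q$ and tracking degrees (for the conic one gets $2\deg p' = 3 - \deg(\text{common factor})$, forcing that factor to be linear or constant, similarly in the other cases) exhibits $V$, up to $\GL_k(K)$, as $g \cdot (K\ell_1^2 + K\ell_1\ell_2 + K\ell_2^2)$ with $\ell_1, \ell_2$ independent linear forms (conic case), or as a space of cubic forms in two linear forms $\ell_1, \ell_2$ (plane-cubic and twisted-cubic cases). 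Choosing $T \in \GL_n(K)$ sending $\ell_1 \mapsto x_1$, $\ell_2 \mapsto x_2$ (and $g \mapsto x_3$ in the conic case when $g \notin K\ell_1 + K\ell_2$, the alternative $g \in K\ell_1 + K\ell_2$ being absorbed into the previous case), every component of $H(Tx)$ lies in $K[x_1, x_2]$, respectively in $K x_3 x_1^2 \oplus K x_3 x_1 x_2 \oplus K x_3 x_2^2$; applying any $S \in \GL_m(K)$, in particular $S = T^{-1}$ when $m = n$, preserves this, and in the conic subcase $\dim_K V = 3$ makes the span in (3) an equality. This gives (2) or (3).

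The main obstacle I expect is precisely this curve classification with its descent: showing the image curve can only be of the listed types, controlling base loci so the $f_i$ really become cubic forms in — or $g$ times binary quadratics in — linear forms, and checking that all linear forms can be taken over $K$, with the extra subtlety that no cardinality hypothesis on $K$ should be needed (in contrast to Theorem~\ref{rkform}).
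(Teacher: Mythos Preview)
Your route differs substantially from the paper's. The paper simply invokes \cite[Th.~2.7]{1501.06046}, which asserts that a homogeneous $H$ with $\trdeg_K K(tH)\le 2$ has the form $H=g\cdot h(p,q)$ with $g$, $h$, $(p,q)$ homogeneous and $\deg g+\deg h\cdot\deg(p,q)=\deg H$; a short case split on $\deg h\in\{\le 1,2,3\}$ then yields (1), (2) or (3) immediately. Your projective-curve argument is in effect a geometric re-derivation of that factorisation in degree~$3$, and is a good deal more labour for this particular lemma, though it does give insight into \emph{why} only the three shapes arise.

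The obstacles you flag in your final paragraph are real, and there is one more you underplay: the lemma as stated carries \emph{no} hypothesis on $\chr K$ or $\#K$. Your appeal to separability of $\phi|_L$ when $\chr K\ne 2,3$ is harmless (separability is not actually needed for the bound $\deg\Gamma\le 3$, since $\deg\Gamma\cdot\deg(\phi|_L)\le 3$ holds regardless), but more seriously you implicitly need $K$ large enough to supply a line $L$ defined over $K$ on which $\phi$ is non-constant and the $f_i|_L$ remain $K$-linearly independent. Your descent sketch is on the right track---a conic acquires a $K$-point from the image of $\phi$ when $K$ is infinite, the singular point of a rational plane cubic is automatically $K$-rational, and for the twisted cubic $\phi|_L$ has degree one---but it remains a sketch, and over small finite $K$ the restriction-to-a-line manoeuvre and the base-locus bookkeeping both need separate justification. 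The paper sidesteps all of this by quoting the structure theorem, which is valid over arbitrary fields.
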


\begin{proof}
If $m = n$ and $\tilde{H}$ is as in (1), then we can replace $T$ by $S^{-1}$,
without affecting (1). If $m = n$ and $\tilde{H}$ is as in (2) or (3), then 
we can replace $S$ by $T^{-1}$. This proves the last claim. 

From the fact that $H$ is homogeneous of positive degree, it follows that 
$\trdeg_K K(tH) = r$  as well. Suppose first that $r \le 1$. 
From \cite[Th.\@ 2.7]{1501.06046}, it follows that we can take $\tilde{H}$ as in (1).

Suppose next that $r = 2$. From \cite[Th.\@ 2.7]{1501.06046}, 
it follows that $H$ is of the form $g \cdot h(p,q)$, 
such that $g$, $h$ and $(p,q)$ are homogeneous and 
$\deg g + \deg h \cdot \allowbreak \deg (p,q) = 3$.
Assume without loss of generality that $\deg h \le 0$ if $\deg (p,q) \le 0$.
Then $\deg h \le 3$.

If $\deg h \le 1$, then every triple of components of $h$ is linearly dependent
over $K$, so we can take $\tilde{H}$ as in (1). 
If $\deg h = 3$, then $\deg (p,q) = 1$ and $\deg g = 0$, so we can take 
$\tilde{H}$ as in (2).

So assume that $\deg h = 2$. Then $\deg(p,q) = 1$ and $\deg g = 1$. If
$g$ is a linear combination of $p$ and $q$, then we can take $\tilde{H}$ 
as in (2). So assume that $g$ is not a linear combination of $p$ and $q$.
Then we can take $\tilde{H}$ as in (3) or (1).
\end{proof}

\begin{theorem} \label{rkle2}
Suppose that $\frac16 \in K$ and let $H \in K[x]^m$ be cubic
homogeneous. Define $r := \rk \jac H$ and suppose that $1 \le r \le 2$.

Then there are $S \in GL_m(K)$ and $T \in \GL_n(K)$, 
such that for $\tilde{H} := SH(Tx)$, one of the following holds:
\begin{enumerate}[\upshape(1)]

\item only the first $r$ rows of $\jac \tilde{H}$ are nonzero;

\item $r=2$ and only the first $2$ columns of $\jac \tilde{H}$ are nonzero;

\item $r=2$ and $K \tilde{H}_1 + K \tilde{H}_2 + \cdots + K \tilde{H}_m
= K x_3 x_1^2 \oplus K x_3 x_1 x_2 \oplus K x_3 x_2^2$.

\end{enumerate}
Furthermore, we can take $S = T^{-1}$ if $m = n$.
\end{theorem}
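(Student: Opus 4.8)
The strategy is to reduce theorem \ref{rkle2} to lemma \ref{trdegle2}; the link between them is the identity $\trdeg_K K(H) = \rk \jac H$, which I would establish first. Since $\rk \jac H \le \trdeg_K K(H)$ always holds, it is enough to verify the reverse inequality $\trdeg_K K(H) \le r$, which is the content of \eqref{rkimptrdeg} for our $r$. If $r = n$, then $r = \rk \jac H \le \trdeg_K K(H) \le \trdeg_K K(x) = n = r$ forces $\trdeg_K K(H) = r$, so there is nothing to prove. If $r < n$, then $s := r+1$ satisfies $r < s \le n$, and theorem \ref{detdep} (with $d = 3$) reduces the claim to the following assertion: every cubic homogeneous $\tilde{H} \in L[x_1,\ldots,x_s]^s$, with $L := K(x_{s+1},\ldots,x_n)$, whose Jacobian determinant with respect to $x_1,\ldots,x_s$ vanishes, satisfies $f(\tilde{H}) = 0$ for some nonzero $f \in L[y_1,\ldots,y_s]$. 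Here $s = r+1 \in \{2,3\}$.

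Establishing this last assertion is the one genuinely substantive step, and I expect it to be the main obstacle; it is not among the results quoted above. Because $\tfrac16 \in K \subseteq L$, we have $\chr L \notin \{2,3\}$, so $\chr L > 3 = \deg \tilde{H}$. In characteristic zero the assertion is automatic — there, vanishing of the Jacobian determinant of $s$ forms in $s$ variables is equivalent to their algebraic dependence — whereas in positive characteristic it is a Gordan--Noether--type statement. For at most three homogeneous forms in at most three variables, in characteristic exceeding the common degree, I would obtain it either from the literature on positive-characteristic Gordan--Noether theory or by a direct analysis of ternary cubics. Granting it, theorem \ref{detdep} yields $\trdeg_K K(H) = r \le 2$.

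It then remains to apply lemma \ref{trdegle2}, which produces $S \in \GL_m(K)$ and $T \in \GL_n(K)$, with $S = T^{-1}$ when $m = n$, such that $\tilde{H} := SH(Tx)$ has one of the three shapes listed there, and to match those shapes with conclusions (1)--(3) of the present theorem. If $\tilde{H}_{r+1} = \tilde{H}_{r+2} = \cdots = \tilde{H}_m = 0$, then rows $r+1, r+2, \ldots, m$ of $\jac \tilde{H}$ vanish, which is precisely conclusion (1). If $r = 2$ and $\tilde{H} \in K[x_1,x_2]^m$, then $\parder{\tilde{H}_i}{x_j} = 0$ for all $i$ and all $j \ge 3$, so columns $3, 4, \ldots, n$ of $\jac \tilde{H}$ vanish, which is conclusion (2). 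The third shape in lemma \ref{trdegle2} coincides verbatim with conclusion (3). Finally, the clause ``$S = T^{-1}$ if $m = n$'' is inherited directly from lemma \ref{trdegle2}, so the argument is complete.
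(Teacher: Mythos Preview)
Your strategy is exactly the paper's: reduce to $\trdeg_K K(H)=r$ via theorem~\ref{detdep}, then invoke lemma~\ref{trdegle2}, whose three cases correspond one-for-one to the three conclusions here (and whose clause $S=T^{-1}$ for $m=n$ carries over verbatim). Your matching of the cases is correct and more explicit than the paper bothers to be.

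The only point where you and the paper diverge is precisely the step you flag as the obstacle: showing that a cubic homogeneous $\tilde H\in L[\tilde x]^s$ with $\det\jac_{\tilde x}\tilde H=0$ satisfies a nontrivial algebraic relation when $\tfrac16\in L$. You gesture at Gordan--Noether theory in positive characteristic, but the paper does not rely on any such black box. Instead it (a) replaces $K$ by an extension field large enough for theorem~\ref{rkform} to apply, (b) uses theorem~\ref{rkform} to normalize so that
\[
(\jac H)\big|_{x=(1,0,0)}=\begin{pmatrix}1&0&0\\0&1&0\\0&0&0\end{pmatrix},
\]
and then (c) verifies \eqref{detdep3} by a finite Maple computation (the file \texttt{dim3detcub.pdf}). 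Note also that the paper takes $s=3$ once, which by the ``for every $r<s$'' clause of theorem~\ref{detdep} covers $r=1$ and $r=2$ simultaneously; your split into $s=r+1\in\{2,3\}$ is harmless but unnecessary. So your proof is complete modulo exactly the computation you anticipated, and the paper's contribution at this point is the normalization-plus-computer-algebra route to that computation rather than an appeal to existing positive-characteristic literature.
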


\begin{proof}
From lemma \ref{trdegle2}, we deduce that it suffices to show
that $r = \trdeg_K \allowbreak K(H)$. From theorem \ref{detdep},
it follows that we may assume that $m = n = 3$, 
and that it suffices to show that
\begin{equation} \label{detdep3}
\det \jac H = 0 \qquad \mbox{implies} \qquad
f(H) = 0 \mbox{ for some } 
f \in K[y_1,y_2,y_3] \setminus \{0\}
\end{equation}
Since we can replace $K$ by an extension field of $K$ to make it large
enough, it follows from theorem \ref{rkform} that we may assume that 
$$
\big(\jac H\big)\big|_{x=(1,0,0)}
= \left( \begin{array}{ccc}
1 & 0 & 0 \\ 0 & 1 & 0 \\ 0 & 0 & 0
\end{array} \right)
$$
What remains is a calculation, which we have performed with Maple 8, see
{\tt dim3detcub.pdf}. It appeared that \eqref{detdep3} was valid.
\end{proof}

\section{Rank 2 with nilpotency}

\begin{theorem} \label{trdeg1}
Let $F = x + H$ be a Keller map, such that $\trdeg_K K(H) = 1$. 

Then $\jac H$ is similar over $K$ to a triangular matrix, and the following
statements are equivalent:
\begin{enumerate}[\upshape (1)]

\item $\det \jac F = 1$;

\item $\jac H$ is nilpotent;

\item $(\jac H) \cdot (\jac H)|_{x=y} = 0$.

\end{enumerate}
\end{theorem}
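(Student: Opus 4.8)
The plan is to first pin down the shape of $\jac H$, then read off the equivalences, leaving the similarity-to-triangular claim as the real work. Since $\trdeg_K K(H)=1$, the inequality quoted in the introduction gives $\rk\jac H\le 1$, and $H$ cannot be constant, so $\rk\jac H=1$. Working over $K(x)$ and clearing denominators, write $\jac H=u\,v\tp$ with $u,v\in K[x]^n$; since $\jac H\ne 0$ both $u,v$ are nonzero. Because $\jac H$ has rank $\le 1$ we have $\det\jac F=\det(I_n+\jac H)=1+\tr\jac H$, and $F$ being Keller forces $\tr\jac H=v\tp u\in K$; set $\lambda:=\det\jac F$, so $v\tp u=\lambda-1$.

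Granting the triangularizability for the moment, the equivalences follow quickly. From $(\jac H)^{k}=(v\tp u)^{k-1}u\,v\tp=(\lambda-1)^{k-1}\jac H$ and $\jac H\ne 0$ one reads off that $\jac H$ is nilpotent $\iff\lambda=1\iff\det\jac F=1$, giving (1)$\Leftrightarrow$(2). Next, $(\jac H)\cdot(\jac H)|_{x=y}=\big(v(x)\tp u(y)\big)\,u(x)v(y)\tp$; the matrix $u(x)v(y)\tp$ is a nonzero element of the domain $\Mat_n(K[x,y])$, so (3) holds iff $v(x)\tp u(y)=0$ identically, and substituting $y=x$ this forces $v\tp u=0$, i.e.\ (1). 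Conversely, assume (1)/(2) and pick $T\in\GL_n(K)$ with $N:=T^{-1}(\jac H)T$ upper triangular; write $N=\bar u\,\bar v\tp$ and put $I=\{i:\bar u_i\ne 0\}$, $J=\{j:\bar v_j\ne 0\}$. Upper-triangularity forces $i\le j$ for all $i\in I,\ j\in J$, so $I\cap J$ has at most one element, and if it had exactly one then $\tr N=\sum_i\bar u_i\bar v_i$ would be a nonzero polynomial, contradicting $\tr N=\tr\jac H=0$. Hence $I\cap J=\emptyset$, so $\bar v(x)\tp\bar u(y)=\sum_k\bar v_k(x)\bar u_k(y)=0$, whence $N(x)N(y)=0$ and $(\jac H)(x)(\jac H)(y)=T\,N(x)N(y)\,T^{-1}=0$, which is (3). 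This closes the cycle.

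For the similarity claim I would use the structure of rank-$1$ maps. A subfield of $K(x)$ of transcendence degree $1$ is rational (Lüroth), so $K(H)=K(p)$ for some $p\in K[x]$, and we may take $K(p)$ algebraically closed in $K(x)$; then each $H_i\in K(p)$, and since $H_i$ is a polynomial and $p$ is non-constant the $H_i$ are in fact polynomials in $p$, say $H_i=r_i(p)$ with $r_i\in K[t]$. By the chain rule $\jac H=u\,\jac p$ where $u:=(r_1'(p),\ldots,r_n'(p))\tp$, so up to scalars $u=r'(p)$ and $v=(\jac p)\tp$. The Keller condition now reads $\sum_j(\parder{}{x_j}p)\,r_j'(p)=\tr\jac H\in K$, and a leading-form analysis of this identity — the part of degree $(\deg r_j)\cdot(\deg p)-1$ is $\bar p^{\,E-1}D_b\bar p$ with $\bar p$ the top form of $p$, $E=\max_j\deg r_j$ and $b$ the vector of leading coefficients of the top-degree $r_j'$, and this and successively lower-degree parts must vanish — forces either every $r_j$ to be affine, so that $H=\alpha p+\beta$ with $\alpha\in K^n\setminus\{0\}$ and $u$ has the constant direction $\alpha$, or $p$ to be affine, so that $v$ has a constant direction. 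In the first case a single $T\in\GL_n(K)$ sending $e_n$ to $\alpha$ makes all but the last row of $T^{-1}(\jac H)T$ vanish; in the second case sending $e_1,\ldots,e_{n-1}$ into the kernel of $v\tp$ makes all but the last column vanish; either way $\jac H$ is similar over $K$ to a triangular matrix.

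I expect this last step — extracting the dichotomy ``$u$ has a constant direction'' versus ``$v$ has a constant direction'' from the single scalar constraint $\sum_j(\parder{}{x_j}p)\,r_j'(p)\in K$ via the degree bookkeeping — to be the main obstacle; once the triangular form is in hand everything else is the routine rank-$1$ linear algebra carried out above. (Alternatively, the triangularizability of a rank-$1$ polynomial Jacobian may already be available from the cited structure results, in which case one can invoke it directly.)
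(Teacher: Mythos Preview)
Your rank-one bookkeeping for (1)$\Leftrightarrow$(2) and (3)$\Rightarrow$(1) is fine (small quibble: $\Mat_n(K[x,y])$ is not a domain, but you only need that the scalar $v(x)\tp u(y)$ lives in the domain $K[x,y]$), and your derivation of (1)$\Rightarrow$(3) from the triangular form is also correct. The genuine gap is in the triangularizability argument itself: the dichotomy you extract from $\sum_j(\parder{}{x_j}p)\,r_j'(p)\in K$ --- ``either every $r_j$ is affine or $p$ is affine'' --- is false. Take $n=3$, $p=x_1^2+x_2$, and $H=(0,\,p,\,p^2)$. Then $\tr\jac H=\parder{}{x_2}p=1\in K$, so $F=x+H$ is Keller with $\det\jac F=2$; yet $\deg p=2$ and $r_3(t)=t^2$ is not affine. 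Here $u=(0,1,2p)\tp$ and $v=(2x_1,1,0)\tp$, so neither vector has a constant direction, and nevertheless $\jac H$ is already lower triangular. So the conclusion ``similar over $K$ to triangular'' does not reduce to ``$u$ or $v$ has a constant direction'', and your leading-form analysis cannot produce it.

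The paper's route is different and bypasses any such dichotomy. After writing $H_i=h_i(p)$ it applies a $K$-linear conjugation (Gaussian elimination on the $h_i\in K[t]$, then a permutation) so that $h_1'=\cdots=h_s'=0$ and $0\le\deg h_{s+1}'<\deg h_{s+2}'<\cdots<\deg h_n'$. The key observation is that the diagonal entries $h_i'(p)\,\parder{}{x_i}p$ of $\jac H$ then have pairwise distinct degrees when nonzero: the gap $\deg h_{i+1}'(p)-\deg h_i'(p)\ge\deg p$ dominates the possible variation $\deg\parder{}{x_i}p<\deg p$. Since $\tr\jac H\in K$, these diagonal entries must all vanish except possibly the $(s{+}1)$-st, which is then constant. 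That forces $\parder{}{x_i}p=0$ for every $i>s{+}1$ (because $h_i'\ne0$ there), so $p\in K[x_1,\ldots,x_{s+1}]$; combined with $h_i'=0$ for $i\le s$ this makes $\jac H$ lower triangular outright. The equivalence of (1)--(3) then drops out by checking whether the single possibly-nonzero diagonal entry $h_{s+1}'(p)\,\parder{}{x_{s+1}}p$ vanishes: if it does, $p\in K[x_1,\ldots,x_s]$ and the supports of $h'(p)$ and $\jac p$ are disjoint, so $(\jac H)(x)\cdot(\jac H)(y)=h'(p(x))\cdot\big((\jac p)(x)\cdot h'(p(y))\big)\cdot(\jac p)(y)=0$. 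The ordering by $\deg h_i'$ is the missing idea.
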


\begin{proof} 
From \cite[Cor.\@ 3.2]{1501.06046}, it follows that there exists a polynomial 
$p \in K[x]$ such that $H_i \in K[p]$ for every $i$. Say that $H_i = h_i(p)$,
where $h_i \in K[t]$ for each $i$. Write $h_i' = \parder{}{t} h_i$,
then
\begin{equation} \label{JHhp}
\jac H = h'(p) \cdot \jac p
\end{equation}
Assume without loss of generality that
$$
h_1' = h_2' = \cdots = h_s' = 0
$$
and
$$
0 \le \deg h_{s+1}' < \deg h_{s+2}' < \cdots < \deg h_n'
$$
If $s < i < n$, then 
$$
\deg h_i'(p) = \deg h_i' \cdot \deg p 
\le (\deg h_{i+1}' - 1) \cdot \deg p = \deg h_{i+1}'(p) - \deg p
$$
Since the degrees of the entries of $\jac p$ are less than $\deg p$, we deduce from
\eqref{JHhp} that the nonzero entries on the diagonal of $\jac H$
all have different degree, in increasing order. Furthermore, the 
nonzero entries beyond the $(s+1)$\oth entry on the diagonal of $\jac H$ have positive
degrees.

From \eqref{JHhp}, it follows that $\rk (- \jac H) \le 1$. 
Consequently, $n-1$ eigenvalues of $-\jac H$ are zero. It follows that
the trailing degree of the eigenvalue polynomial of $-\jac H$ is at least
$n-1$. More precisely,
$$
\det (t I_n - -\jac H) = t^n - \tr (-\jac H) \cdot t^{n-1}
$$
so
$$
\det \jac F = \big(t^n - \tr (-\jac H) \cdot t^{n-1}\big)\big|_{t=1} = 1 + \tr \jac H
$$
We infer that the diagonal of $\jac H$ is totally zero, except maybe the 
$(s+1)$\oth entry, which is constant. 

So $\parder{}{x_i} p = 0$ for all $i > s+1$, and $\jac H$ is lower triangular.
If the $(s+1)$\oth entry on the diagonal of $\jac H$ is nonzero, then
(1), (2) and (3) do not hold. So assume that the $(s+1)$\oth entry on the 
diagonal of $\jac H$ is zero. Then $\parder{}{x_i} p = 0$ for all $i > s$, and
(1), (2) and (3) do hold.
\end{proof}

\begin{lemma} \label{2x2}
Let $N \in \Mat_2(K)$ such that $N$ is nilpotent. Then there
are $a,b,c \in K[x]$, such that
$$
N = c \left( \begin{array}{cc} 
ab & -b^2 \\ a^2 & -ab \end{array}\right)
$$
Furthermore, $N$ is similar over $K$ to a triangular matrix, if and only
if $a$ and $b$ are linearly dependent over $K$.
\end{lemma}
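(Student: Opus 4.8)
The starting point is that a $2 \times 2$ matrix $N$ (whose entries we read as lying in $K[x]$, so that $K[x]$-scalars make sense) is nilpotent exactly when $\tr N = 0$ and $\det N = 0$; by Cayley--Hamilton, valid over any commutative ring, these conditions already force $N^2 = 0$. Writing $N$ with zero trace as
$$
N = \begin{pmatrix} p & q \\ r & -p \end{pmatrix} ,
$$
the condition $\det N = 0$ reads $p^2 = -qr$. Since
$$
\begin{pmatrix} ab & -b^2 \\ a^2 & -ab \end{pmatrix} = \begin{pmatrix} b \\ a \end{pmatrix}\begin{pmatrix} a & -b \end{pmatrix} ,
$$
the existence statement amounts to writing the triple $(p,q,r)$ as $(cab, -cb^2, ca^2)$ for suitable $a,b,c \in K[x]$, i.e.\ to exhibiting a ``square-root'' rank-one factorization of $N$.

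For existence I would distinguish two cases. If $r = 0$, then $p^2 = -qr = 0$ forces $p = 0$, so $N = \left(\begin{smallmatrix} 0 & q \\ 0 & 0\end{smallmatrix}\right)$, and $a := 0$, $b := 1$, $c := -q$ works. If $r \ne 0$, I would use that $K[x]$ is a unique factorization domain: set $d := \gcd(p,r)$ and write $p = d\,p_0$, $r = d\,r_0$ with $p_0, r_0$ coprime; then $p^2 = -qr$ gives $d\,p_0^2 = -q\,r_0$, so by coprimality $r_0 \mid d$, say $d = c\,r_0$, whence $q = -c\,p_0^2$. Setting $a := r_0$ and $b := p_0$ one checks directly that $cab = p$, $ca^2 = r$ and $-cb^2 = q$. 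Note this construction yields $a \ne 0$ exactly when $N \ne 0$, which is what links the choice of $a,b$ to the triangularizability of $N$.

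For the equivalence, the direction ``$a,b$ linearly dependent over $K$'' $\Rightarrow$ ``$N$ similar over $K$ to a triangular matrix'' is the easy one. If $b = 0$, then $N$ is already lower triangular. If $a = \lambda b$ with $\lambda \in K$, then $N = c\,b^2 \left(\begin{smallmatrix} \lambda & -1 \\ \lambda^2 & -\lambda\end{smallmatrix}\right)$ is a scalar multiple in $K[x]$ of a fixed matrix over $K$ with zero trace and determinant, hence nilpotent, hence similar over $K$ to a triangular matrix via its Jordan form; conjugating the scalar multiple by the same $T \in \GL_2(K)$ keeps it triangular.

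The converse is the heart of the argument and, I expect, the main obstacle. If $c = 0$ then $N = 0$ and the construction gave $a = 0$, so $a,b$ are dependent; thus assume $c \ne 0$, and suppose $T \in \GL_2(K)$ makes $M := T^{-1}NT$ upper triangular (the lower-triangular case is symmetric). With $u := (b,a)\tp$ and $v := (a,-b)\tp$ we have $N = c\,u v\tp$, hence $M = c\,(T^{-1}u)\,(T\tp v)\tp$, whose $(2,1)$-entry equals $c$ times the second coordinate of $T^{-1}u$ times the first coordinate of $T\tp v$; this entry vanishes identically, and since $c \ne 0$ and $K[x]$ is a domain, one of those two coordinates must vanish identically. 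But the second coordinate of $T^{-1}u$ is the $K$-linear combination of $b$ and $a$ whose coefficients form the second row of $T^{-1}$, while the first coordinate of $T\tp v$ is the $K$-linear combination of $a$ and $-b$ whose coefficients form the first column of $T$; if $a$ and $b$ were linearly independent over $K$, the vanishing of either combination would force that row of $T^{-1}$, resp.\ that column of $T$, to be zero, contradicting invertibility of $T$. Hence $a$ and $b$ are linearly dependent over $K$. The delicate points are the UFD step in the existence part and, in this converse, correctly identifying which transformed coordinate is which $K$-combination of $a$ and $b$.
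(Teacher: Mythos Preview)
Your proof is correct and complete. The one inaccurate statement is the parenthetical remark ``this construction yields $a \ne 0$ exactly when $N \ne 0$'': in your $r=0$ branch you always set $a=0$, even when $q \ne 0$ and hence $N \ne 0$. Fortunately you never use this remark; what your converse argument actually needs, and what your construction does give, is that $c = 0$ only when $N = 0$ (and then $a = 0$), which you state correctly a few lines later.

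Your route and the paper's are close in spirit but organized differently. For existence, the paper first passes to the fraction field $K(x)$: since $\det N = 0$ one writes $N = c\binom{b}{a}(a~-\tilde b)$ with $a,b \in K[x]$ and $c,\tilde b \in K(x)$, then $\tr N = 0$ forces $\tilde b = b$, and choosing $a,b$ coprime pulls $c$ back into $K[x]$. You instead stay inside $K[x]$ throughout and extract $a,b,c$ via an explicit $\gcd$ computation from $p^2 = -qr$. Both hinge on the same UFD fact; yours is more explicit, the paper's is quicker to state.

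For the equivalence, the paper gives a one-line characterization: since the rows of $N$ are $cb(a,-b)$ and $ca(a,-b)$, they are $K$-linearly dependent iff $a$ and $b$ are, and $K$-dependence of the rows of a nilpotent $2\times 2$ matrix is exactly similarity over $K$ to a (lower) triangular matrix. Your argument unpacks the same phenomenon at the level of a single entry of $T^{-1}NT = c(T^{-1}u)(T^t v)^t$, which makes the role of the invertibility of $T$ very transparent. Either way, the content is that $a,b$ dependent over $K$ is equivalent to $N$ having a left (equivalently right) null vector in $K^2$.
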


\begin{proof}
Since $\det N = 0$, we can write $N$ in de form
$$
N = c \cdot \binom{b}{a} \cdot \big(\,a ~~ {-\tilde{b}}\,\big)
$$
where $a,b \in K[x]$ and $\tilde{b},c \in K(x)$. From $\tr N = 0$,
it follows that $\tilde{b} = b$. If we choose $a$ and $b$ relatively prime,
then $c \in K[x]$ as well.

Furthermore, $a$ and $b$ are linearly dependent over $K$, 
if and only if the rows of $N$ are linearly dependent over $K$, 
if and only if $N$ is similar over $K$ to a triangular matrix.
\end{proof}

\begin{lemma} \label{2x2nilp}
Let $H \in K[x]^2$ be cubic homogeneous, such that $\jac_{x_1,x_2} H$ is 
nilpotent. Then there exists a $T \in \GL_2(K)$ such that for 
$\tilde{H} := T^{-1} H\big(T(x_1,x_2),x_3,\allowbreak x_4,\allowbreak 
\ldots,x_n\big)$, one of the following holds:
\begin{enumerate}[\upshape (1)]

\item $\jac_{x_1,x_2} \tilde{H}$ is a triangular matrix;

\item there are independent linear forms $a, b \in K[x]$,
such that
$$
\jac_{x_1,x_2} \tilde{H} = 
\left( \begin{array}{cc} ab & -b^2 \\ a^2 & -ab \end{array} \right)
\qquad \mbox{ and } \qquad 
\jac_{x_1,x_2} \left( \begin{array}{c} a \\ b \end{array} \right)
=  \left( \begin{array}{cc} 0 & 0 \\ 0 & 0 \end{array} \right)
$$

\item $\frac13 \notin K$ and there are independent linear forms $a, b \in K[x]$,
such that
$$
\jac_{x_1,x_2} \tilde{H} = 
\left( \begin{array}{cc} ab & -b^2 \\ a^2 & -ab \end{array} \right)
\qquad \mbox{and} \qquad 
\jac_{x_1,x_2} \left( \begin{array}{c} a \\ b \end{array} \right)
=  \left( \begin{array}{cc} 0 & 1 \\ 1 & 0 \end{array} \right)
$$
\end{enumerate}
\end{lemma}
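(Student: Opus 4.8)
The plan is to analyse $N := \jac_{x_1,x_2}H$, which is nilpotent by hypothesis and whose entries are homogeneous of degree $2$ because $H$ is cubic homogeneous. By Lemma~\ref{2x2} I would write $N = c\binom ba(a,-b)$ with $a,b \in K[x]$ relatively prime and $c \in K[x]$. If $N = 0$, or $a = 0$, or $b = 0$, then $N$ is already triangular and one is done with $T = I$, so assume $a$, $b$, $c$ are all nonzero. Comparing degrees in $N_{12} = -cb^2$ and $N_{21} = ca^2$ forces $\deg a = \deg b$ and $\deg c = 2 - 2\deg a$, so either $a,b$ are nonzero constants and $c$ is a quadratic form, or $a,b$ are independent linear forms and $c \in K\setminus\{0\}$. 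In the first subcase the constant matrix $\binom ba(a,-b)$ is nonzero and nilpotent, hence similar over $K$ to $\left(\begin{smallmatrix}0&1\\0&0\end{smallmatrix}\right)$; conjugating $H$ by the corresponding $T \in \GL_2(K)$ makes $\jac_{x_1,x_2}\tilde H$ equal to $\left(\begin{smallmatrix}0&1\\0&0\end{smallmatrix}\right)$ times the (still degree-$2$) polynomial $c(T(x_1,x_2),x_3,\ldots,x_n)$, and one is again in case~(1). The essential case that remains is: $a$, $b$ independent linear forms and $c \in K\setminus\{0\}$.

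For that case, put $G := (T(x_1,x_2),x_3,x_4,\ldots,x_n)$ with $T \in \GL_2(K)$ still free. By the chain rule $\jac_{x_1,x_2}\tilde H = T^{-1}N(G)\,T$ with $N(G) = c\binom{b(G)}{a(G)}(a(G),-b(G))$, and, writing $z := \binom ab$ and $M := \jac_{x_1,x_2}z \in \Mat_2(K)$ (the matrix of the $x_1,x_2$-coefficients of $a$ and $b$), one also has $\jac_{x_1,x_2}\big(z(G)\big) = MT$. The structural fact I would extract next is the integrability of $N$: since $N_{ij} = \parder{H_i}{x_j}$, symmetry of second partial derivatives gives $\parder{N_{11}}{x_2} = \parder{N_{12}}{x_1}$ and $\parder{N_{21}}{x_2} = \parder{N_{22}}{x_1}$. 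Substituting $N_{11} = cab$, $N_{12} = -cb^2$, $N_{21} = ca^2$, $N_{22} = -cab$, dividing by $c$, and using that $a$ and $b$ are relatively prime linear forms, these two identities collapse to
\[
\parder{a}{x_1} = \parder{b}{x_2} = 0, \qquad
\parder{a}{x_2} = \parder{b}{x_1} =: \gamma, \qquad 3\gamma = 0 ;
\]
equivalently $M = \gamma\left(\begin{smallmatrix}0&1\\1&0\end{smallmatrix}\right)$, and $M = 0$ whenever $\chr K \ne 3$.

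Finally I would take $T$ diagonal, $T = \operatorname{diag}(t,1)$ with $t \in K\setminus\{0\}$, and use the identity $T\tp J T = (\det T)J$ for $J := \left(\begin{smallmatrix}0&-1\\1&0\end{smallmatrix}\right)$ together with $\tr N = 0$. A short computation then yields $\jac_{x_1,x_2}\tilde H = \lambda\,\tilde v\,(J\tilde v)\tp$, where $\tilde v := T^{-1}\binom{b(G)}{a(G)}$ and $\lambda := -ct$. Choosing $\mu \in K\setminus\{0\}$ with $\mu^2 = -\lambda = ct$ and setting $\binom{\tilde b}{\tilde a} := \mu\,\tilde v$, the right-hand side becomes exactly $\left(\begin{smallmatrix}\tilde a\tilde b & -\tilde b^2 \\ \tilde a^2 & -\tilde a\tilde b\end{smallmatrix}\right)$, with $\tilde a$, $\tilde b$ independent linear forms (as $a(G),b(G)$ are, and $T$, $\mu$ are invertible); moreover $\jac_{x_1,x_2}\binom{\tilde a}{\tilde b} = \mu\left(\begin{smallmatrix}0&1\\1&0\end{smallmatrix}\right)T^{-1}\left(\begin{smallmatrix}0&1\\1&0\end{smallmatrix}\right)MT$. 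If $M = 0$, I take $t := 1/c$, so that $\mu = 1$ and this last Jacobian vanishes: case~(2). If $M = \gamma\left(\begin{smallmatrix}0&1\\1&0\end{smallmatrix}\right)$ with $\gamma \ne 0$ --- which forces $\chr K = 3$, i.e.\ $\tfrac13 \notin K$ --- then, using $\left(\begin{smallmatrix}0&1\\1&0\end{smallmatrix}\right)^2 = I$, that Jacobian equals $\mu\gamma\left(\begin{smallmatrix}0&1\\1&0\end{smallmatrix}\right)$, so taking $t := 1/(c\gamma^2)$ forces $\mu = 1/\gamma$: case~(3).

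The step I expect to be the genuine obstacle is getting rid of the scalar $c$: absorbing it into $\tilde a$ and $\tilde b$ works only when $c$ is a square in $K$, and at first sight there seems to be a real square-class obstruction. The point that dissolves it --- and that is worth isolating on its own --- is that conjugating by $T$ multiplies the relevant scalar $\lambda$ by $\det T$, which can be prescribed arbitrarily in $K\setminus\{0\}$, so no obstruction survives; and the very same freedom in $\det T$ is what lets one simultaneously normalise $\gamma$ to $1$ in characteristic $3$. The remaining ingredients --- the degree bookkeeping, the chain rule, and the integrability computation --- are routine.
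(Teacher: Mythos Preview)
Your argument is correct and follows the same overall architecture as the paper: invoke Lemma~\ref{2x2}, reduce to the case where $a,b$ are independent linear forms and $c\in K^{*}$, derive the constraints on $\jac_{x_1,x_2}\binom{a}{b}$, and then normalise by a diagonal conjugation.

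The execution of the middle step is where you and the paper genuinely diverge. You read off the constraints on $M=\jac_{x_1,x_2}\binom{a}{b}$ directly from the integrability conditions $\partial_{x_2}N_{i1}=\partial_{x_1}N_{i2}$; since $a,b$ are independent linear forms, the two resulting linear relations $(b_{x_2})a+(a_{x_2}+2b_{x_1})b=0$ and $(2a_{x_2}+b_{x_1})a+(a_{x_1})b=0$ force $a_{x_1}=b_{x_2}=0$ and $a_{x_2}=-2b_{x_1}$, $b_{x_1}=-2a_{x_2}$, hence $3a_{x_2}=3b_{x_1}=0$ and $a_{x_2}=b_{x_1}$. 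This is clean and characteristic-free. The paper instead first absorbs $c$ via $T=\operatorname{diag}(c,1)$, then runs a more hands-on argument with Euler's identity $3\tilde H_1=(\jac\tilde H_1)\cdot\binom{x_1}{x_2}$ and an auxiliary form $\tilde c=\tilde a+\tilde b\,\partial_{x_1}\tilde b$ to show $\tilde b_{x_2}=0$, followed by explicit coefficient comparisons to pin down $\lambda,\mu$; this recovers the same constraints but less transparently. Your final normalisation (using that conjugation by $T$ scales the rank-one scalar by $\det T$, so one may arrange $ct$ to be a square) and the paper's two-step normalisation ($\operatorname{diag}(c,1)$ followed by a scalar $\lambda^{-1}I_2$) amount to the same freedom exploited differently. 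One small point worth making explicit in your write-up: when you compare degrees in $N_{12}$ and $N_{21}$, you are implicitly using that $a,b,c$ may be taken homogeneous, which follows because $N$ is homogeneous and the rank-one factorisation is unique up to units.
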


\begin{proof}
Suppose that (1) does not hold.
From lemma \ref{2x2}, it follows that there are $a,b,c \in K[x]$, such that
$$
\jac_{x_1,x_2} H = c \left( \begin{array}{cc}
ab & - b^2 \\ a^2 & -ab \end{array} \right)
$$
where $a$ and $b$ are linearly independent over $K$. As $H$ is cubic
homogeneous, the entries of $\jac_{x_1,x_2} H$ are quadratic homogeneous,
so $c \in K$ and $a$ and $b$ are independent linear forms.

If we take
$$
T = \left( \begin{array}{cc} c & 0 \\ 0 & 1 \end{array} \right)
$$
then it follows from the chain rule that
$$
\jac_{x_1,x_2} \tilde{H} = \left( \begin{array}{cc}
\tilde{a}\tilde{b} & -\tilde{b}^2 \\ \tilde{a}^2 & -\tilde{a}\tilde{b} \end{array} \right)
$$
where $\tilde{a} = c \cdot a|_{x_1=cx_1}$ and $\tilde{b} = b|_{x_1=cx_1}$.

We show that the coefficient of $x_2$ in $\tilde{b}$ is $0$. So assume the opposite.
Then the coefficient of $x_2^3$ in 
$$
3 \tilde{H}_1 = \jac_{x_1,x_2} \tilde{H}_1 \cdot \binom{x_1}{x_2}
$$
is nonzero. In particular, $\frac13 \in \chr K$. 
Let $\tilde{c} := \tilde{a} + \tilde{b} \parder{}{x_1} \tilde{b}$.
Then $\tilde{c} \tilde{b} = \tilde{a} \tilde{b} + \parder{}{x_1} (\frac13 \tilde{b}^3)$
and
$$
\jac_{x_1,x_2} (\tilde{H}_1 + \tfrac13 \tilde{b}^3) = (\tilde{c}\tilde{b}~0) 
$$
As a consequence, $\parder{}{x_2} \tilde{c}\tilde{b} = \parder{}{x_1} 0 = 0$. Furthermore,
$\tilde{c}$ and $\tilde{b}$ are independent, just like $\tilde{a}$ and 
$\tilde{b}$.

From $\parder{}{x_2} \tilde{c}\tilde{b} = 0$, it follows that $\tilde{c}\tilde{b} \in 
K[x_1,x_3,x_4,\ldots,x_n]$ if $\frac12 \in K$. Since $\tilde{c}$ 
and $\tilde{b}$ are independent, we deduce that $\tilde{c}\tilde{b} \in 
K[x_1,x_3,x_4,\ldots,x_n]$ if $\frac12 \notin K$ as well. 
As the coefficient of $x_2$ in $\tilde{b}$ is nonzero, we must have 
$\tilde{c} = 0$, which is a contradiction. 

So the coefficient of $x_2$ in $\tilde{b}$ is $0$. Similarly, the 
coefficient of $x_1$ in $\tilde{a}$ is $0$, so
$$
\jac_{x_1,x_2} \left( \begin{array}{c} \tilde{a} \\ \tilde{b} \end{array} \right)
=  \left( \begin{array}{cc} 0 & \lambda \\ \mu & 0 \end{array} \right)
$$
where $\lambda, \mu \in K$, and
$$
\jac_{x_1,x_2} \tilde{H} = \left( \begin{array}{cc}
(\mu x_1 + \cdots) (\lambda x_2 + \cdots) & -(\mu x_1 + \cdots)^2 \\ 
(\lambda x_1 + \cdots)^2 & -(\mu x_1 + \cdots) (\lambda x_2 + \cdots)
\end{array} \right)
$$
If $\frac12 \notin K$, then $\lambda \mu = 0$ because antidifferentiating 
$x_1 x_2$ with respect to $x_1$ is not possible.
So the coefficient of $x_1^2 x_2$ in $2\tilde{H}_1$ is equal to both
$\lambda\mu$ and $-2\mu^2$, regardsless of whether $\frac12 \in K$ or
not. Similarly, the coefficient of $x_1 x_2^2$ in 
$2\tilde{H}_2$ is equal to both $\lambda\mu$ and $-2\lambda^2$. 

So either
$\lambda = \mu = 0$ or $\lambda = -2\mu = 4\lambda$. In the latter
case, $\frac13 \notin K$ and $\lambda = \mu$. We can get rid of $\lambda$ 
and $\mu$ if we replace $\tilde{H}$ by 
$\lambda \tilde{H}\big(\lambda^{-1}(x_1,x_2),x_3,x_4,\ldots,x_n\big)$.
\end{proof}

\begin{theorem} \label{uporkle2}
Suppose that $\frac16 \in K$.
Let $H \in K[x]^n$, such that $H$ is cubic homogeneous and $\jac H$ is nilpotent.
\begin{enumerate}[\upshape (i)]

\item  
If $\rk \jac H = 1$, then there exists a $T \in \GL_n(K)$ such that for 
$\tilde{H} := T^{-1} H(Tx)$,
\begin{align*}
\tilde{H}_1 &\in K[x_2,x_3,x_4,\ldots,x_n]  \\
\tilde{H}_2 &= \tilde{H}_3 = \tilde{H}_4 = \cdots = \tilde{H}_n = 0
\end{align*}

\item
If $\rk \jac H = 2$ and $\jac H$ is not similar over $K$ to a 
triangular matrix, then there exists a $T \in \GL_n(K)$ such that for 
$\tilde{H} := T^{-1} H(Tx)$,
\begin{align*}
\tilde{H}_1 &- (x_1x_3x_4-x_2x_4^2) \in K[x_3,x_4,\ldots,x_n]  \\
\tilde{H}_2 &- (x_1x_3^2-x_2x_3x_4) \in K[x_3,x_4,\ldots,x_n]  \\
\tilde{H}_3 &= \tilde{H}_4 = \cdots = \tilde{H}_n = 0
\end{align*}

\end{enumerate}
Furthermore, $x+H$ is invertible if $\rk \jac H \le 2$. Moreover, $x + H$ is 
even tame if either $\rk \jac H = 1$ or $\rk \jac H = 2$ and $n \ge 5$.
\end{theorem}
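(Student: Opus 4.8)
The plan is to derive (i) and (ii) from Theorem~\ref{rkle2} together with the $2\times2$ analysis in Lemma~\ref{2x2nilp}, and then to read off invertibility and tameness from the resulting normal forms. Since $1\le\rk\jac H\le2$, Theorem~\ref{rkle2} gives (with $S=T^{-1}$, as $m=n$) a $T$ with $\tilde H:=T^{-1}H(Tx)$ in one of its three cases, and every further change of variables I use --- the conjugation $T'\oplus I_{n-2}$ coming from Lemma~\ref{2x2nilp}, a conjugation $I_2\oplus T''$ with $T''\in\GL_{n-2}(K)$, and permutation matrices --- is again of the shape $S=T^{-1}$, so this normalization is preserved throughout. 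The key point is: in cases (1) and (2) of Theorem~\ref{rkle2} the matrix $\jac\tilde H$ has only two nonzero rows, respectively only two nonzero columns, hence equals $UV\tp$ for $n\times2$ matrices $U,V$ with $V\tp U=\jac_{x_1,x_2}(\tilde H_1,\tilde H_2)$; by Sylvester's determinant identity $\det(tI_n-UV\tp)=t^{\,n-2}\det\big(tI_2-V\tp U\big)$, so $\jac H$ is nilpotent if and only if the $2\times2$ matrix $\jac_{x_1,x_2}(\tilde H_1,\tilde H_2)$ is nilpotent.

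For (i), the hypothesis $r=1$ excludes cases (2) and (3) of Theorem~\ref{rkle2}, so only the first row of $\jac\tilde H$ is nonzero; Euler's identity gives $\tilde H_2=\cdots=\tilde H_n=0$, and nilpotency kills the single diagonal entry $\parder{}{x_1}\tilde H_1$, whence $\tilde H_1\in K[x_2,\ldots,x_n]$. (Theorem~\ref{trdeg1} gives an alternative route, since $\trdeg_KK(H)=1$.)

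For (ii), apply Lemma~\ref{2x2nilp} to $(\tilde H_1,\tilde H_2)$; as $\tfrac16\in K$ forces $\tfrac13\in K$, its case (3) is impossible. I claim that unless we are in ``case (1) of Theorem~\ref{rkle2} together with case (2) of Lemma~\ref{2x2nilp}'', the matrix $\jac H$ is similar over $K$ to a triangular matrix, contradicting the hypothesis of (ii). If case (1) of Lemma~\ref{2x2nilp} holds, then $\jac_{x_1,x_2}(\tilde H_1,\tilde H_2)$ is strictly triangular, so $\tilde H_1$ loses all dependence on $x_1,x_2$ and $\tilde H_2$ on one of them, and after swapping the coordinates $x_1,x_2$ the matrix $\jac\tilde H$ becomes strictly triangular. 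If case (2) of Theorem~\ref{rkle2} holds, then $\tilde H\in K[x_1,x_2]^n$, so the independent linear forms $a,b$ supplied by Lemma~\ref{2x2nilp} lie in $K[x_1,x_2]$ and hence satisfy $\jac_{x_1,x_2}(a,b)\neq0$, which excludes case (2) of that lemma and leaves only its case (1), handled above. If case (3) of Theorem~\ref{rkle2} holds, write $\tilde H_i=\alpha_ix_1^2x_3+\beta_ix_1x_2x_3+\gamma_ix_2^2x_3$; a short computation (the reduced matrix $V\tp U$ from Sylvester's identity involves only the $\alpha_i,\beta_i,\gamma_i$ with $i\le3$) shows that nilpotency of $\jac\tilde H$ forces $\tilde H_3=0$ and confines $\tilde H_1,\tilde H_2$ to a single line $K(\lambda x_1-x_2)^2x_3$, after which a permutation of the coordinates triangularizes $\jac\tilde H$. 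In the one remaining situation --- case (1) of Theorem~\ref{rkle2}, case (2) of Lemma~\ref{2x2nilp} --- the independent linear forms $a,b$ satisfy $\jac_{x_1,x_2}(a,b)=0$, hence lie in $K[x_3,\ldots,x_n]$, and a change of the last $n-2$ variables normalizes $a=x_3$, $b=x_4$; then $\jac_{x_1,x_2}(\tilde H_1,\tilde H_2)$ has rows $(x_3x_4,-x_4^2)$ and $(x_3^2,-x_3x_4)$, and antidifferentiating together with cubic homogeneity yields exactly the asserted normal form $\tilde H_1-(x_1x_3x_4-x_2x_4^2)\in K[x_3,\ldots,x_n]$, $\tilde H_2-(x_1x_3^2-x_2x_3x_4)\in K[x_3,\ldots,x_n]$, $\tilde H_3=\cdots=\tilde H_n=0$.

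Finally, for invertibility and tameness: whenever $\jac H$ is similar over $K$ to a triangular matrix --- which, by the above, is so when $\rk\jac H=1$ and in every $\rk\jac H=2$ situation except the normal form of (ii) --- it is strictly triangular (being nilpotent), so $x+H$ is conjugate over $K$ to a triangular automorphism and hence tame, in particular invertible. In the normal form of (ii), $x+\tilde H$ fixes $x_3,\ldots,x_n$ and acts on $(x_1,x_2)$ as an affine map over $K[x_3,\ldots,x_n]$ with linear part $I_2+N$, where $N$ is the nilpotent $2\times2$ block above; since $\det(I_2+N)=1$ this affine map is invertible, so $x+H$ is invertible. When moreover $n\ge5$, the invariant $u:=x_3x_1-x_4x_2$ of $I_2+N$ can be stored in the unused variable $x_5$ by an elementary map, making the updates $x_1\mapsto x_1+x_4x_5$ and $x_2\mapsto x_2+x_3x_5$ elementary; restoring $x_5$ and inserting the remaining shifts by elements of $K[x_3,\ldots,x_n]$ (again elementary) writes $x+H$ as a composition of elementary automorphisms, so it is tame. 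The step I expect to fight hardest with is case (3) of Theorem~\ref{rkle2}: determining precisely when the corresponding $\jac\tilde H$ is nilpotent and producing the triangularizing permutation, all while keeping every conjugation of the shape $S=T^{-1}$.
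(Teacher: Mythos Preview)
Your approach is the paper's: reduce via Theorem~\ref{rkle2} with $S=T^{-1}$, pass nilpotency to the small principal block, feed that block into Lemma~\ref{2x2nilp}, and handle tameness by the elementary decomposition underlying Lemma~\ref{tamelm}. Using Sylvester's identity to transfer nilpotency to the $2\times2$ (or $3\times3$) block is a clean substitute for the paper's implicit block argument, and your treatment of Theorem~\ref{rkle2} cases~(1) and~(2) is fine (the ``swap $x_1,x_2$'' is only needed in case~(1); in case~(2) the matrix is already lower triangular without it, so phrase that step as ``after possibly swapping'').

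There is one genuine gap, exactly where you flagged it. In case~(3) of Theorem~\ref{rkle2} your computation is correct: nilpotency forces $\tilde H_3=0$ and $\tilde H_1,\tilde H_2\in K\ell^2x_3$ for some linear $\ell\in K[x_1,x_2]$. But the assertion that ``a permutation of the coordinates triangularizes $\jac\tilde H$'' is false in general. When $c_1,c_2$ are both nonzero, each of $\jac\tilde H\cdot e_1$, $\jac\tilde H\cdot e_2$, $\jac\tilde H\cdot e_3$ has nonzero entries in \emph{both} of the first two positions, so none of $e_1,e_2,e_3$ can serve as the next vector after $e_4,\ldots,e_n$ in a triangularizing flag of standard basis vectors. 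The fix is short: since $\tilde H_1,\tilde H_2$ are $K$-proportional, a further conjugation by $T'\oplus I_{n-2}$ with $T'\in\GL_2(K)$ makes $\tilde H_2=0$ (the substitution $x_{1,2}\mapsto T'x_{1,2}$ only replaces $\ell$ by another linear form and keeps every $\tilde H_i$ in $x_3K[x_1,x_2]_2$). With $\tilde H_2=\tilde H_3=0$, the cyclic permutation placing $4,\ldots,n$ before $1,2,3$ does make $\jac\tilde H$ upper triangular. The paper, by contrast, avoids computing in case~(3) altogether: it asserts the equivalence ``$\jac_{x_1,x_2}(\tilde H_1,\tilde H_2)$ triangularizable over $K$ iff $\jac\tilde H$ is'', and then argues via Lemma~\ref{2x2nilp} that the non-triangularizable branch forces $a,b\in K[x_3,\ldots,x_n]$ independent, hence $ab\notin K[x_1,x_2,x_3]$, which already rules out cases~(2) and~(3) of Theorem~\ref{rkle2}. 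Either route closes the argument.
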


\begin{proof}
We can take $\tilde{H}$ as in (1), (2) or (3) of theorem \ref{rkle2}. 
If $\rk \jac H = 1$, then $\jac \tilde{H}$ is as in (1) of theorem \ref{rkle2}, 
and (i) holds, because $\tr \jac \tilde{H} = 0$. So assume that $\rk \jac H = 2$. 

If $\tilde{H}$ is as in (3) of theorem \ref{rkle2}, then $\tilde{H}_3 = 0$,
because $x_3^{-1} \tilde{H}_3$ is the constant part with respect to 
$x_3$ of $\tr \jac \tilde{H} = 0$. It follows that in all three cases 
of theorem \ref{rkle2}, $\jac_{x_1,x_2}(\tilde{H}_1,\tilde{H}_2)$ is nilpotent.

Furthermore, $\jac_{x_1,x_2}(\tilde{H}_1,\tilde{H}_2)$ is similar over 
$K$ to a triangular matrix, if and only $\jac \tilde{H}$
is similar over $K$ to a triangular matrix, if and only if 
$\jac H$ is similar over $K$ to a triangular matrix.

Now suppose that $\jac H$ is not similar over $K$ to a
triangular matrix. Then $\jac_{x_1,x_2}(\tilde{H}_1,\tilde{H}_2)$
is not similar over $K$ to a triangular matrix. Since $\frac16 \in K$,
and $x_3^2 \nmid ab$ in (2) of lemma \ref{2x2nilp},
it follows from lemma \ref{2x2nilp} that $\tilde{H}_1 \notin 
K[x_1,x_2,x_3]$, so $\tilde{H}$ is not as in (2) or (3) of theorem 
\ref{rkle2}.

So $\tilde{H}$ is as in (1) of theorem \ref{rkle2}, and
$\tilde{H}_3 = \tilde{H}_4 = \cdots = \tilde{H}_n = 0$. 
Consequently, we can take $\tilde{H}$ such that $a = x_3$
and $b = x_4$ in (2) of lemma \ref{2x2nilp}. So (ii) holds.

To prove the last claim, notice first that $x+H$ is tame if
$\jac H$ is similar over $K$ to a triangular matrix. 
Now suppose that $\jac H$ is not similar over $K$ 
to a triangular matrix and $n \ge 5$. Then $x+H$ is tame,
if and only if $x + \tilde{H}$ is tame, which is the case
if
$$
\big(x_1 + x_4(x_3x_1-x_4x_2),x_2 + x_3(x_3x_1-x_4x_2),x_3,x_4,x_5\big)
$$
is tame. Using extension of scalars, we see that in order to prove that
$x+H$ is tame, it suffices to show that
$$
\big(x_1 + cb(ax_1-bx_2),x_2 + ca(ax_1-bx_2),x_3\big)
$$
is tame as a polynomial map in dimension $3$ over $K[a,b,c]$.
This is done in lemma \ref{tamelm} below.
\end{proof}

\begin{lemma}\label{tamelm}
\begin{align*}
\lefteqn{\big(x_1 + cb(ax_1-bx_2),x_2 + ca(ax_1-bx_2),x_3\big)} \\
&= \big(x_1 + b c x_3, x_2 + a c x_3, x_3\big) \circ 
   \big(x_1, x_2, x_3 + (a x_1 - b x_2)\big) \circ {} \\
&\quad~ \big(x_1 - b c x_3, x_2 - a c x_3, x_3\big) \circ 
   \big(x_1, x_2, x_3 - (a x_1 - b x_2)\big)
\end{align*}
\end{lemma}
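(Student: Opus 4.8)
The assertion is an identity between polynomial maps in $x_1,x_2,x_3$ whose coefficients are polynomials in $a,b,c$, so it may be regarded as an identity over any commutative ring (in particular over $K[a,b,c]$, which is the ring over which it is applied in the proof of Theorem~\ref{uporkle2}); the plan is therefore simply to compose the four maps on the right and simplify. Reading the right-hand side from right to left, call the factors $\phi_1 = (x_1,x_2,x_3-(ax_1-bx_2))$, $\phi_2 = (x_1-bcx_3,x_2-acx_3,x_3)$, $\phi_3 = (x_1,x_2,x_3+(ax_1-bx_2))$ and $\phi_4 = (x_1+bcx_3,x_2+acx_3,x_3)$, so that the right-hand side is $\phi_4\circ\phi_3\circ\phi_2\circ\phi_1$. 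I would first remark that $\phi_3 = \phi_1^{-1}$ and $\phi_4 = \phi_2^{-1}$, so this composition is the commutator $\phi_2^{-1}\phi_1^{-1}\phi_2\phi_1$ of two elementary (triangular) maps; this already makes the map tame, which is the point of invoking it in Theorem~\ref{uporkle2}.

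The verification then runs in four steps. Applying $\phi_1$ sends $x$ to $(x_1,x_2,y_3)$ with $y_3 := x_3-(ax_1-bx_2)$. Applying $\phi_2$ gives $(x_1-bc\,y_3,\;x_2-ac\,y_3,\;y_3)$. The key simplification is that when $\phi_3$ is applied, the increment of the third coordinate, namely $a(x_1-bc\,y_3)-b(x_2-ac\,y_3)$, collapses to $ax_1-bx_2$, because the two $y_3$-terms cancel ($-abc\,y_3+abc\,y_3=0$); hence after $\phi_3$ the third coordinate returns to $y_3+(ax_1-bx_2)=x_3$, while the first two coordinates are still $x_1-bc\,y_3$ and $x_2-ac\,y_3$. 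Finally $\phi_4$ adds $bc\,x_3$ and $ac\,x_3$ back to the first two coordinates; since $y_3 = x_3-(ax_1-bx_2)$, this cancels the $x_3$-part of the $\phi_2$-shift and leaves exactly $x_1+bc(ax_1-bx_2)$ and $x_2+ac(ax_1-bx_2)$, which is the left-hand side.

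There is essentially no obstacle here: the only things to watch are the order of composition and the signs, together with the cancellation noted above (that the $c$-dependent corrections produced by $\phi_2$ do not alter the increment produced by $\phi_3$), which is what makes the commutator close up to the desired "linear~$\times$~linear" shear rather than to something messier. One may also point out that this is the standard device for expressing a shear of the form $x_i\mapsto x_i+(\text{homogeneous quadratic})$ as a product of four elementary automorphisms, and that is precisely why $x+\tilde H$ in Theorem~\ref{uporkle2} is tame once $n\ge 5$, since then the two extra variables $x_3,x_4$ play the role of the parameters $a,b$ (with $c=1$) and a third variable $x_5$ is available to carry the intermediate elementary moves.
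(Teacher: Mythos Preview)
Your proof is correct: you carry out by hand precisely the direct verification that the paper itself suggests doing ``with Maple or something,'' and your observation that the right-hand side is the commutator $\phi_2^{-1}\phi_1^{-1}\phi_2\phi_1$ of two elementary maps is exactly the point. The paper additionally notes an alternative route via the proposition in \cite{MR1001475}, using the locally nilpotent derivation $D = cb\,\parder{}{x_1} + ca\,\parder{}{x_2}$ with $a x_1 - b x_2 \in \ker D$, which gives a conceptual explanation for why such a commutator decomposition exists.
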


One can verify lemma \ref{tamelm} with Maple or something, or use
the proposition in \cite{MR1001475}, with 
$$
D = cb\parder{}{x_1} + ca\parder{}{x_2} \qquad \mbox{and} \qquad  
a x_1 + b x_2 \in \ker D
$$
to get a proof.

\section{Nilpotent Jacobians and computation}

For nilpotent matrices, the conjugation classes are given by Jordan normal forms. 
Now it would be useful to have a similar reduction by linear conjugations for 
non-linear maps with nilpotent Jacobians. Notice that for maps of degree $d$, the
Jacobian has degree $d-1$, and linear conjugation do not change this. So it is impossible
to get a Jordan normal form by linear conjugations of maps of degree $2$ at least.

But one can substitute some constant vector in $x$ in the Jacobian and hope that the 
Jacobian will be a Jordan normal form after this substitution. We will show that
this is indeed possible after a suitable linear conjugation, provided the base field
is infinite. Furthermore, we can obtain that the substitution vector is the sum of at 
most $\sqrt{n}$ distinct unit vectors. 

For a matrix $M \in \Mat_n(K)$, write $\cork M := n - \rk M$.
Let $v \in K^n$ be nonzero and $M \in \Mat_n(K)$ be nilpotent.
Define the {\em image exponent} of $v$ with respect to $M$ as
$$
\ie (M,v) = \ie_K (M,v) := \max \{i \in \N \mid M^i v \ne 0\}
$$ 
and the {\em preimage exponent} of $v$ with respect to $M$ as
$$
\pe (M,v) = \pe_K (M,v) := \max \{i \in \N \mid M^i w = v \mbox{ for some } w \in K^n\}
$$

Suppose that $N \in \Mat_{n}(K)$ has ones on the subdiagonal and zeros
elsewhere. Then $\ie (N,v) + \pe (N,v) = n-1$ for each nonzero $v \in K^n$.
If $M \in \Mat_n(K(x))$ is nilpotent and $\cork M = 1$, 
then $N = T^{-1} M T$ for some $T \in \GL_n(K(x))$.
So if $M$ is nilpotent and $\cork M = 1$, then
$\ie (M,v) + \pe (M,v) = n-1$ for each nonzero $v \in K(x)$ as well, 
because $N$ is the Jordan normal form of $M$, 
$\ie(T^{-1}MT,T^{-1} v) = \ie(M,v)$ and $\pe(T^{-1}MT,T^{-1} v) = \pe(M,v)$.

\begin{proposition} \label{Jordanadd}
Assume $M \in \Mat_n(K)$ is nilpotent and $v \in K^n$ is nonzero.
Then there exists a $T \in \GL_n(K)$ such that
$N := T^{-1} M T$ is the Jordan normal form of $M$ and
$$
w := T^{-1} v = e_{i_1} + e_{i_2} + \cdots + e_{i_m}
$$
where
$$
\ie (N,e_{i_1}) < \ie (N,e_{i_2}) < \cdots < \ie (N,e_{i_m}) = \ie (N,w) = \ie (M,v)
$$
and
$$
\pe (M,v) = \pe (N,w) = \pe (N,e_{i_1}) < \pe (N,e_{i_2}) < \cdots < \pe (N,e_{i_m})
$$
\end{proposition}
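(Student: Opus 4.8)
The plan is to reduce immediately to the Jordan normal form and then argue unit vector by unit vector within each Jordan block, using induction on $\ie(M,v)$. First I would choose any $T_0 \in \GL_n(K)$ with $N := T_0^{-1}MT_0$ in Jordan normal form; since $\ie$ and $\pe$ are invariant under simultaneous conjugation of $M$ and $v$ (as noted in the excerpt), it suffices to find a further change of basis, preserving the shape of $N$, that brings $w_0 := T_0^{-1}v$ into the desired form. Write $N$ as a direct sum of Jordan blocks $N_1, N_2, \ldots, N_k$ of sizes $n_1 \ge n_2 \ge \cdots \ge n_k$, acting on coordinate blocks, and decompose $w_0 = \sum_j u_j$ accordingly with $u_j$ living in the $j$-th block. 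Within each block, the allowed basis changes are exactly those that commute with a single nilpotent Jordan block together with the ``shift'' maps between blocks of equal or comparable size; I would first use the within-block shifts (which act transitively on the nonzero vectors of a given $\ie$-value inside one block, modulo vectors of strictly smaller $\ie$) to normalise each $u_j$ to be a unit vector $e_{p_j}$ of the appropriate exponent, or zero.

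The next step is to combine contributions from different blocks that happen to have the same image exponent. If two summands $e_{p}$ and $e_{q}$ from distinct blocks satisfy $\ie(N,e_p) = \ie(N,e_q)$, then the inter-block elementary transformations that commute with $N$ (adding one block's basis chain into another's, possible precisely when block sizes are compatible) let me eliminate one of the two, replacing $e_p + e_q$ by a single $e_p$ while only modifying coordinates of strictly smaller $\ie$-value — here is where I expect to lean on the standard description of the centraliser of a nilpotent matrix. Iterating this collapses $w_0$ to a sum $e_{i_1} + \cdots + e_{i_m}$ of unit vectors with pairwise distinct image exponents $\ie(N,e_{i_1}) < \cdots < \ie(N,e_{i_m})$. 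Since each adjustment changes $w$ only in coordinates of strictly smaller image exponent, the largest image exponent is never disturbed, so $\ie(N,w) = \ie(N,e_{i_m})$ throughout, and $\ie(N,w) = \ie(M,v)$ by invariance. For the $\pe$-statement, I would note that inside a single Jordan block of size $n_j$ one has $\ie(N,e_p) + \pe(N,e_p) = n_j - 1$, so within a fixed block the $\pe$-values are as spread out as the $\ie$-values but in the opposite order; the claim that the $e_{i_\ell}$ can simultaneously be taken with strictly increasing $\ie$ and strictly increasing $\pe$ amounts to showing they can be chosen from blocks of strictly decreasing size, with the one of smallest $\ie$ (and hence $\pe(N,w)$, since a preimage of $w$ must have preimages of every summand) sitting in the largest block. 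That ordering can be arranged during the collapsing step: when two candidate summands compete, keep the one in the larger block.

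The main obstacle is the bookkeeping in the combining step — verifying that the inter-block operations available in the centraliser of $N$ really do let one merge any two equal-$\ie$ unit vectors into one while controlling exactly which lower-$\ie$ coordinates get perturbed, and simultaneously steering the choice so that the surviving summands land in blocks of strictly decreasing size (equivalently, so that both the $\ie$ and the $\pe$ sequences come out strictly monotone). Once the interaction between block sizes, image exponents and preimage exponents is pinned down via $\ie + \pe = (\text{block size}) - 1$, the monotonicity of the $\pe$-chain and the identities $\pe(M,v) = \pe(N,w) = \pe(N,e_{i_1})$ should follow formally, the last equality because any $w$ in the image of $N^i$ forces each of its unit-vector summands to lie in the image of $N^i$, so $\pe(N,w) = \min_\ell \pe(N,e_{i_\ell}) = \pe(N,e_{i_1})$.
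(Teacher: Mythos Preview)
Your overall strategy---pass to the Jordan form, then normalise $w$ using elements of the centraliser of $N$---matches the paper's, and the within-block reduction to a single unit vector is correct. However, the inter-block combining step as you describe it is incomplete. You only merge two summands $e_p, e_q$ when $\ie(N,e_p) = \ie(N,e_q)$, and this does not suffice to make both the $\ie$- and the $\pe$-sequence strictly increasing. For instance, with two blocks of sizes $5$ and $3$ and $w = e_1 + e_7$ (so $\ie(e_1) = 4$, $\pe(e_1) = 0$, $\ie(e_7) = 1$, $\pe(e_7) = 1$), the image exponents are already distinct, so your procedure halts; yet ordering by $\ie$ gives $e_7, e_1$ with $\pe$-values $1, 0$, which is decreasing. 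The correct criterion is to merge whenever the pair is \emph{not} already strictly increasing in both $\ie$ and $\pe$; the paper carries this out explicitly for two Jordan blocks (its case $\cork M = 2$) and then applies that case pairwise in general.

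There is also a direction error in your final paragraph. Since $\ie(N,e) + \pe(N,e)$ equals one less than the block size, requiring both $\ie$ and $\pe$ to be strictly increasing forces the block sizes of the $e_{i_\ell}$ to be strictly \emph{increasing}, not decreasing. In particular $e_{i_1}$, which carries the smallest $\ie$ and the smallest $\pe$, must sit in the \emph{smallest} of the participating blocks; your sentence placing it in the largest block is inconsistent with your own (correct) observation that $\pe(N,w) = \min_\ell \pe(N,e_{i_\ell})$. Your heuristic ``keep the one in the larger block'' is salvageable once you extend the merging rule as above: whenever one summand has both $\ie$ and $\pe$ at most those of another, it can be absorbed, and the survivor indeed lies in the larger block.
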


\begin{proof}
We distinguish three cases:
\begin{itemize}

\item $\cork M = 1$. \\
Let $N$ be the matrix with ones on the subdiagonal and zeros elsewhere.
Then $N$ is the Jordan normal form of $M$. All eigenvalues of $M$ are zero,
in particular contained in $K$, so $N = T^{-1} M T$ for some $T \in \GL_n(K)$.
Let $w := T^{-1} v$ and $i$ be the index of the first nonzero coordinate of $w$.
Notice that $n-i = \ie(N,w) = \ie(M,v)$ and $i-1 = \pe(N,w) = \pe(M,v)$.

The operator $x \mapsto Nx$ shifts the coordinates of its argument one step downward,
inserting a zero above. The operator $x \mapsto N\tp x$ shifts the coordinates of its 
argument one step upward, inserting a zero below. Now define
the matrix $S \in \GL_N(K)$ by $S e_i = w$, $S e_{i+j} = N^j w$ and 
$S e_{i-j} = (N\tp)^j w$, for all $j \ge 1$. Then $(TS)^{-1} v = S^{-1}w = e_i$, 
so it suffices to show that $(TS)^{-1} M TS = S^{-1} N S$ is the Jordan normal form of
$M$. Indeed $S^{-1} N S e_j = N e_j$ for all $j$, 
because by definition of $i$, $S$ is constructed in such a way that
$N S e_j = S e_{j+1}$ for all $j < n$ and $N S e_n = 0$.

\item $\cork M = 2$. \\
Again, let $N = T^{-1} M T$ be the subdiagonal 
Jordan normal form of $M$ and $w = T^{-1} v$. Notice that $N$ has
two Jordan blocks, say $N_1 \in \Mat_r(K)$ and $N_2 \in \Mat_{n-r}(K)$.
Since $\cork N_1 = \cork N_2 = 1$, it follows from the case 
$\cork M = 1$ that we may assume that
$w$ is the sum of at most two unit vectors $e_i$ and $e_j$, such that
$1 \le i \le r < j \le n$. If $w = e_i$ or $w = e_j$, then we are done, so assume
$w = e_i + e_j$.

Assume without loss of generality that 
$\pe (N,e_i) \le \pe (N,e_j)$ and, in case $\pe (N,e_i) = \pe (N,e_j)$, that
$\ie (N,e_i) \ge \ie (N,e_j)$. Since
we are done in case both $\ie (N,e_i) < \ie (N,e_j)$ and $\pe (N,e_i) < \pe (N,e_j)$,
we may assume that $\ie (N,e_i) \ge \ie (N,e_j)$ in any case.

Since $\ie (N,e_j) \le \ie (N,e_i) = r-i$ it follows that $\ie(N,w) = r-i$.
Since $\pe (N,e_j) \ge \pe (N,e_i) = i-1$ it follows that $\pe(N,w) = i-1$. 
In fact, $N^{i-1} (e_1 + e_{j-i+1}) = w$.

Now define
the matrix $S \in \GL_n(K)$ by $S e_k = e_k + e_{j-i+k}$ if $j-i+k \le n$ and
$S e_k = e_k$ if $j-i+k > n$. Then $S e_i = e_i + e_j = w$, so
$S^{-1} w = e_i$. From $n-j = \ie (N,e_j) \le \ie (N,e_i) = r-i$,
we obtain that $j - i + r \ge n$. Consequently, $Se_r \in \{e_r+e_n,e_r\}$ and
$N S e_r = 0$.

Since $N S e_k = S e_{k+1}$
for all $k \not\in \{r, n\}$ and $N S e_r = 0 = N S e_n$,
it follows that $S^{-1} N S = N$. So we can get rid of $e_j$ as a summand of $w$.
This gives the desired result.

\item $\cork M \ge 3$. \\
Again, let $N = T^{-1} M T$ be the subdiagonal 
Jordan Normal Form of $M$ and $w = T^{-1} v$. From the case $\cork M = 1$, we obtain
that we may assume that $w$ is the sum of at most one unit vector $e_i$
for each Jordan block. From the case $\cork M = 2$, we obtain that we may assume that
two summands $e_i$ and $e_j$ of $w$ satisfy $\ie (N,e_i) < \ie (N,e_j)$ and
$\pe (N,e_i) < \pe (N,e_j)$. That gives the desired result. \qedhere

\end{itemize}
\end{proof}

Notice that $m$ in proposition \ref{Jordanadd} is at most $\sqrt{n}$. This is
because the size of the Jordan block with coordinate $i_{k+1}$ must be at least $2$ larger
than that with $i_k$ (in order to have both $\ie (N,e_{i_{k}}) < \ie (N,e_{i_{k+1}})$ and
$\pe (N,e_{i_{k}}) < \pe (N,e_{i_{k+1}})$) so the sizes are at least $1, 3, 5, \ldots, 2m-1$,
and the series of the odd numbers are the squares.

\begin{theorem} \label{Jordanth}
Let $K$ be an infinite field. 
Take $H \in K[x]^n$ of degree at most $d$, such that $\jac H$
is nilpotent.

Then there exists a $T \in \GL_n(K)$ such that 
$$
\Big(\jac\big(T^{-1} H(Tx)\big)\Big)\Big|_{x = w} = N
$$
where $N$ is the Jordan Normal Form of $\jac H$ and 
$$
w = e_{i_1} + e_{i_2} + \cdots + e_{i_m}
$$
such that
$$
\ie (N,e_{i_1}) < \ie (N,e_{i_2}) < \cdots < \ie (N,e_{i_m}) = \ie_{K(x)} (\jac H,x)
$$
and
$$
\pe_{K(x)} (\jac H, x) = \pe (N,e_{i_1}) < \pe (N,e_{i_2}) < \cdots < \pe (N,e_{i_m})
$$
\end{theorem}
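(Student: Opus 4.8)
The plan is to reduce everything to a single well-chosen specialization of the Jacobian. Write $M_v := (\jac H)|_{x=v}$ for $v \in K^n$; since $\jac H$ is a nilpotent matrix over $K[x]$, so is $M_v$ for every $v$. The idea is to pick $v$ ``generic enough'' that $M_v$ has the same Jordan normal form as $\jac H$ and that the pair $(M_v,v)$ has the same image and preimage exponents as the pair $(\jac H,x)$; then apply Proposition \ref{Jordanadd} to $(M_v,v)$ to get $T \in \GL_n(K)$ with $T^{-1}M_v T = N$ and $w := T^{-1}v = e_{i_1}+\cdots+e_{i_m}$ satisfying the required chains of exponents; and finally transport back by the chain rule, since $Tw = v$ gives
$$
\Big(\jac\big(T^{-1}H(Tx)\big)\Big)\Big|_{x=w} = T^{-1}(\jac H)\big|_{x=Tw}T = T^{-1}M_v T = N .
$$

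To find such a $v$ I would collect three ``genericity'' conditions, each cutting out a nonempty Zariski-open subset of $K^n$; since $K$ is infinite, finitely many such conditions are satisfiable simultaneously, and by a nonzero $v$ (a nonempty Zariski-open set is not a single point over an infinite field).
\begin{enumerate}[\upshape (a)]
\item $\rk (M_v)^k = \rk (\jac H)^k$ for $k = 1,\ldots,n$. Indeed $\rk(M_v)^k \le \rk(\jac H)^k =: \rho_k$ always, because the $(\rho_k+1)$-minors of the polynomial matrix $(\jac H)^k$ vanish identically, hence at $v$; and equality fails only on the zero set of some nonzero $\rho_k$-minor. This forces $M_v$ and $\jac H$ to have the same Jordan normal form $N$.
\item $\ie(M_v,v) = \ie_{K(x)}(\jac H,x)$. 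Writing $e$ for the right-hand side, the polynomial vector $(\jac H)^e x$ is nonzero while $(\jac H)^{e+1}x = 0$; since evaluation at $x=v$ is a ring homomorphism, $(M_v)^i v = \big((\jac H)^i x\big)\big|_{x=v}$, so $\ie(M_v,v)\le e$ always and $\ie(M_v,v)=e$ off the zero set of the components of $(\jac H)^e x$.
\item $\pe(M_v,v) = \pe_{K(x)}(\jac H,x)$. Write $p$ for the right-hand side; note $p \le n-1$. For ``$\ge$'': pick $w_0 \in K(x)^n$ with $(\jac H)^p w_0 = x$ and clear denominators to get $g \in K[x]\setminus\{0\}$ and $\tilde w_0 \in K[x]^n$ with $(\jac H)^p \tilde w_0 = g\,x$; then $(M_v)^p\big(g(v)^{-1}\tilde w_0(v)\big)=v$ whenever $g(v)\ne 0$. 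For ``$\le$'': over $K(x)$ the system $(\jac H)^{p+1}w = x$ is inconsistent, so the augmented matrix $\big((\jac H)^{p+1}\mid x\big)$ has rank $\rho+1$ with $\rho := \rk(\jac H)^{p+1}$; hence one of its $(\rho+1)\times(\rho+1)$ minors is a nonzero polynomial, and combining its non-vanishing at $v$ with (a) gives $\rk\big((M_v)^{p+1}\mid v\big)=\rho+1 > \rho = \rk(M_v)^{p+1}$, so $(M_v)^{p+1}w = v$ has no solution in $K^n$.
\end{enumerate}

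Fixing a nonzero $v$ that satisfies (a), (b) and (c), Proposition \ref{Jordanadd} applied to $M_v$ and $v$ produces $T$ and $w = T^{-1}v = e_{i_1}+\cdots+e_{i_m}$ with
$$
\ie(N,e_{i_1}) < \cdots < \ie(N,e_{i_m}) = \ie(M_v,v)
$$
and
$$
\pe(M_v,v) = \pe(N,e_{i_1}) < \cdots < \pe(N,e_{i_m}) ,
$$
and by (b) and (c) the extreme terms are $\ie_{K(x)}(\jac H,x)$ and $\pe_{K(x)}(\jac H,x)$, exactly the normalizations demanded. The displayed chain-rule identity then finishes the proof.

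I expect the main obstacle to be item (c): the inequality ``$\ge$'' is just denominator clearing, but ``$\le$'' requires rephrasing unsolvability of the linear system $(\jac H)^{p+1}w = x$ over $K(x)$ as the rank statement $\rk\big((\jac H)^{p+1}\mid x\big) = \rk(\jac H)^{p+1}+1$ and then checking that this (non-)solvability is inherited by the generic specialization, which is precisely where (a) is also needed. The remaining ingredients — the chain rule and Proposition \ref{Jordanadd} — are used essentially verbatim.
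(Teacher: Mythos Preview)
Your proof is correct and follows the same architecture as the paper's: choose a generic $v \in K^n$ at which $M_v := (\jac H)|_{x=v}$ retains the Jordan type of $\jac H$ and the pair $(M_v, v)$ retains the image and preimage exponents of $(\jac H, x)$; then apply Proposition~\ref{Jordanadd} and finish with the chain rule identity $\big(\jac(T^{-1}H(Tx))\big)\big|_{x=w} = T^{-1}M_v T$.

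The only real difference is in how the generic $v$ is produced. The paper fixes an $S \in \GL_n\big(K(x)\big)$ with $S^{-1}(\jac H)S = N$, records the support set $I$ of the coefficient vector $S^{-1}x$, and chooses $v$ so that $\det S$ does not vanish at $v$ and the support of $S|_{x=v}^{-1}v$ is still exactly $I$; this single specialization condition simultaneously forces the Jordan form, $\ie$ and $\pe$ to be preserved (since all three are read off from $N$ and the support $I$). You instead spell out the three preservation statements separately as explicit rank and non-vanishing conditions (a)--(c), handling the upper bound on $\pe$ via the augmented-matrix rank criterion. Your route is a bit longer but more transparent about exactly which polynomials must not vanish, while the paper's is slicker in bundling everything into the single datum $(S,I)$. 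Either argument is complete.
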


\begin{proof}
Since $\jac H$ is nilpotent, all eigenvalues of $\jac H$ are zero,
in particular contained in $K(x)$, so there exists an 
$S \in \GL_n\big(K(x)\big)$, such that $S^{-1}(\jac H)S$ has lower 
triangular Jordan Normal Form. For the $i${\textsuperscript th} column
$S e_i$ of $S$, we have
$$
(\jac H) S e_i = S \big(S^{-1} (\jac H) S\big) e_i \in \{S e_{i+1}, 0\}
$$
Furthermore, we can write $x$ as a $K(x)$-linear combination of the 
columns of $S$.

Let $I$ be the set of column indices of $S$, for which the coefficients
of $x$, as a $K(x)$-linear combination of the columns of $S$, are nonzero.
Take $v \in K^n$, such that $S|_{x=v} \in \GL_n(K)$, and such that `$|_{x=v}$ 
preserves $I$', i.e.\@ $I$ is also the set of column indices of 
$S|_{x=v}$, for which the coefficients 
of $v = x|_{x=v}$, as a $K$-linear combination of the columns of $S|_{x=v}$, 
are nonzero.

Then 
$$
S|_{x=v}^{-1} (\jac H)|_{x=v} S|_{x=v} = \big(S^{-1} (\jac H) S\big)\big|_{x=v} = 
S^{-1} (\jac H) S
$$
because $S^{-1} (\jac H) S \in \Mat_n(K)$. Furthermore,
$M := (\jac H)|_{x=v}$ has the same Jordan Normal Form as $\jac H$, 
$\ie(M,v) = \ie_{K(x)}(\jac H,x)$ and $\pe(M,v) = \pe_{K(x)}(\jac H,x)$. 
So there exists a $T \in \GL_n(K)$ such that $N := T^{-1} M T$ 
and $w := T^{-1} v$ satisfy the properties of proposition
\ref{Jordanadd}.

Now
\begin{eqnarray*}
\Big(\jac \big(T^{-1}H(Tx)\big)\Big)\Big|_{x=w} 
&=& T^{-1} (\jac H)|_{x=Tx}|_{x=w} T \\ 
&=& T^{-1} (\jac H)|_{x=v} T \\ &=& T^{-1} M T \\ &=& N
\end{eqnarray*}
as desired.
\end{proof}

\begin{corollary} \label{Jordancor}
Let $K$ be an infinite field, and 
$H \in K[x]^n$, such that $\jac H^s = 0$, but $\jac H^{s-1} \cdot x \ne 0$. 

Then
there exists a $T \in \GL_n(K)$, such that for $\tilde{H} := T^{-1}H(Tx)$,
we have that $(\jac \tilde{H})|_{x=e_1}$ has lower triangular Jordan Normal 
Form, with a Jordan block of size $s$ in the upper left corner.
\end{corollary}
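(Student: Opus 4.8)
The plan is to apply Theorem~\ref{Jordanth} and then reorder the Jordan blocks by a suitable permutation over $K$. Since $(\jac H)^s = 0$, the matrix $\jac H$ is nilpotent, so Theorem~\ref{Jordanth} yields a $T_0 \in \GL_n(K)$ such that, writing $\hat H := T_0^{-1}H(T_0 x)$, we have $(\jac \hat H)\big|_{x=w} = N$, the lower triangular Jordan normal form of $\jac H$, with $w = e_{i_1} + e_{i_2} + \cdots + e_{i_m}$ satisfying the $\ie$- and $\pe$-chains stated there. The hypotheses of the corollary say exactly that $\ie_{K(x)}(\jac H, x) = s-1$: the condition $\jac H^{s-1}\cdot x \ne 0$ gives $\ie_{K(x)}(\jac H, x) \ge s-1$, and $\jac H^s = 0$ gives $\ie_{K(x)}(\jac H, x) \le s-1$. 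Hence $\ie(N, e_{i_m}) = s-1$, and since $\jac H^s = 0$ forces every Jordan block of $N$ to have size at most $s$, the Jordan block of $N$ containing the index $i_m$ has size exactly $s$.

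Next I would collapse $w$ to a single coordinate vector. Applied blockwise, the identity $\ie(N, e_j) + \pe(N, e_j) = \ell - 1$ for a coordinate vector $e_j$ lying in a Jordan block of size $\ell$ (the single-block fact recalled just before Proposition~\ref{Jordanadd}) gives, for the size-$s$ block through $i_m$, that $\pe(N, e_{i_m}) = (s-1) - \ie(N, e_{i_m}) = 0$. But $\pe(N, e_{i_1}) < \pe(N, e_{i_2}) < \cdots < \pe(N, e_{i_m}) = 0$ is a strictly increasing chain of nonnegative integers ending in $0$, so $m = 1$. Therefore $w = e_{i_1}$ is a single coordinate vector, $\pe(N, e_{i_1}) = 0$, and $i_1$ is the smallest index in its Jordan block, which has size $s$.

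Finally, let $P \in \GL_n(K)$ be the permutation matrix that maps $e_1, e_2, \ldots, e_s$ order-preservingly onto the coordinate vectors of the size-$s$ Jordan block of $N$ through $i_1$ — so in particular $Pe_1 = e_{i_1} = w$ — and maps the remaining coordinate vectors order-preservingly onto the coordinate vectors of the other Jordan blocks, keeping each block consecutive. Then $P^{-1}NP$ is again a lower triangular Jordan normal form, with the same multiset of block sizes but the block through $i_1$ moved to the upper-left corner; in particular it is a lower triangular Jordan normal form with a Jordan block of size $s$ in the upper-left corner. Setting $T := T_0 P$ and $\tilde H := T^{-1}H(Tx) = P^{-1}\hat H(Px)$, the chain rule gives $(\jac \tilde H)\big|_{x=e_1} = P^{-1}\,(\jac \hat H)\big|_{x=Pe_1}\,P = P^{-1}\,(\jac \hat H)\big|_{x=w}\,P = P^{-1}NP$, which is exactly of the asserted form. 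The only nonroutine point is the middle paragraph: realizing that the maximality of $\ie_{K(x)}(\jac H, x)$ forces its Jordan block to be as large as possible, which collapses the strictly increasing $\pe$-chain to a single term and thereby turns the substitution vector $w$ of Theorem~\ref{Jordanth} into a single coordinate vector; everything after that is bookkeeping with a block permutation.
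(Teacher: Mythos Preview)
Your proof is correct and follows essentially the same route as the paper's: apply Theorem~\ref{Jordanth}, use $\jac H^s=0$ and $\jac H^{s-1}\cdot x\ne 0$ to get $\ie_{K(x)}(\jac H,x)=s-1$, deduce that the block through $i_m$ has size exactly $s$ and hence $\pe(N,e_{i_m})=0$, collapse the strictly increasing $\pe$-chain to $m=1$, and permute the block to the upper-left corner. Your treatment of the final permutation step (with the explicit $P$ and the chain-rule verification) is a bit more detailed than the paper's, but the argument is the same.
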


\begin{proof}
Since $\jac H^{s-1} \cdot x$ is nonzero and $\jac H^s \cdot x = 0$, 
we have $\ie_{K(x)}(\jac H, x) = s-1$. So $\ie(N, e_{i_m}) = s-1$. 
Consequently, $e_{i_m}$ is the $s$\oth standard basis unit vector from below 
in the range of standard basis unit vectors which coincide with $J$, where 
$J$ is the lower triangular Jordan block of $N$ which coincides with $e_{i_m}$. 
In particular, the size of $J$ is at least $s$. 

Since $N^s = 0$ along with  $\jac H^s = 0$, it follows that $N$ has no Jordan 
block whose size exceeds $s$. So $J$ has size $s$ and $J$ is the largest Jordan 
block of $N$. Furthermore, $e_{i_m}$ is the first standard basis unit vector 
in the range of $s$ standard basis unit vectors which coincide with $J$. 
Hence $\pe(N, e_{i_m}) = 0$. So $m = 1$ and $\pe_{K(x)}(JH, x) = 0$. 
We can permute $J$ to the upper left corner of $N$, which gives the desired
result. 
\end{proof}

\begin{theorem}[{inspired by \cite[Lem.\@ 2.10]{MR3178045}}]
Let $H \in K[x]^n$ be quadratic homogeneous. Then 
$$
\pe_{K(x)}(\jac H, x) = 0 \qquad \mbox{and} \qquad 
\pe \big((\jac H)|_{x=v}, v\big) = 0 \mbox{ for all nonzero } v \in K^n
$$
\end{theorem}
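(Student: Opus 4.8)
The plan is to exploit the Euler relation for homogeneous polynomials. Since $H$ is quadratic homogeneous, each component $H_i$ satisfies $(\jac H \cdot x)_i = \sum_j x_j \parder{}{x_j} H_i = 2 H_i$, so $\jac H \cdot x = 2H$. More generally, since the entries of $\jac H$ are linear forms, substituting $x = v$ for a constant vector $v$ and then evaluating $\jac H$ at a vector proportional to $v$ scales linearly; in particular $(\jac H)|_{x=v} \cdot v = 2 H(v)$, so $(\jac H)|_{x=v} \cdot v = \jac H \cdot x$ evaluated at $v$. The key observation I would use is that the column space of $(\jac H)|_{x=v}$ and the vector $(\jac H)|_{x=v} \cdot v$ interact rigidly because of the homogeneity: if $(\jac H)|_{x=v} w = v$ for some $w$, I want to derive a contradiction (unless $\jac H \cdot x = 0$ forces the image exponent to be $0$ trivially, in which case $\pe$ is at most $-1$ by convention — but here the statement asserts $\pe = 0$, so I should check the degenerate case separately or note that $\pe = 0$ is meant as "$= 0$, not $\ge 1$").

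The main step is to show $\pe_{K(x)}(\jac H, x) \ne 1$, i.e.\@ there is no $w \in K(x)^n$ with $(\jac H)|_{x=y} \cdot w = x$ — wait, more carefully: $\pe(\jac H, x) \ge 1$ would mean $(\jac H) w = x$ for some $w \in K(x)^n$. Applying the Euler-type identity: $x = (\jac H) w$. I would then take a further directional derivative or substitute cleverly. Concretely, consider the scaling $x \mapsto tx$: $\jac H$ is linear in its argument, so $(\jac H)|_{x=tx} = t \jac H$. If $(\jac H) w = x$ with $w = w(x) \in K(x)^n$, replacing $x$ by $tx$ gives $t(\jac H)|_{x=x} \cdot w(tx)\cdot(\text{degree bookkeeping}) = tx$; comparing, $w(tx)$ must have a specific homogeneity. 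The cleaner route, following the cited \cite[Lem.\@ 2.10]{MR3178045}, is: suppose $(\jac H) w = x$. Multiply on the left by $x\tp$ (row vector) — no, rather, use that $x\tp \jac H = $ gradient of something. Actually $\parder{}{x_i}\big(\tfrac12 x\tp (\jac H) \cdot (\text{...})\big)$ — the slick identity is that $\jac(\jac H \cdot x) = \jac(2H) = 2 \jac H$, hence $\jac H$ is its own "derivative of $\jac H \cdot x$" up to scalar. So if $x = (\jac H) w$, differentiate: $I_n = \jac\big((\jac H) w\big) = (\text{terms involving } \jac(\jac H) \text{ contracted with } w) + (\jac H)(\jac w)$. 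Since $\jac H$ is nilpotent, taking traces: $\tr I_n = n$, while $\tr\big((\jac H)\jac w\big)$ and the other term need to be bounded. The term $\sum_k w_k \parder{}{x_k}(\jac H)$ — but $\parder{}{x_k}(\jac H)$ is a constant matrix, and summing with weights $w_k$ gives $(\jac H)|_{x=w}$ by linearity! So $I_n = (\jac H)|_{x=w} + (\jac H)(\jac w)$.

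Now I take the trace: $n = \tr\big((\jac H)|_{x=w}\big) + \tr\big((\jac H)(\jac w)\big)$. For the nilpotency: if additionally $\jac H \cdot x = 0$, then $\jac H$ evaluated at any vector in... hmm, this needs the nilpotency of $\jac H$ as a matrix over $K(x)$, so all its eigenvalues vanish and $\tr \jac H = 0$ identically, but I need $\tr$ of a product and of a substituted matrix. The resolution I expect: iterate. From $I_n = (\jac H)|_{x=w} + (\jac H)(\jac w)$, multiply by powers of $\jac H$ on the left and use $\jac H^N = 0$ for $N \gg 0$ to telescope, showing $I_n$ is a sum of nilpotent-ish pieces, contradiction since $\tr I_n = n \ne 0$ (char considerations aside — but over any field $n \ne 0$ fails in char $p \mid n$; the cited lemma presumably handles this, perhaps the statement implicitly assumes the needed characteristic, or one argues $\jac H$ having $x$ in its image of $\jac H^1$ forces a rank jump that is impossible). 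The same argument works verbatim for the constant specialization $v$: replace $x$ by $v$ throughout, $\jac w$ specializes appropriately. I expect the main obstacle to be precisely the bookkeeping that converts "$(\jac H) w = x$" into the matrix identity $I_n = (\jac H)|_{x=w} + (\jac H)(\jac w)$ correctly — getting the coefficient right via Euler's relation — and then the clean extraction of a contradiction from nilpotency, which I believe goes through by multiplying by $\jac H^{N-1}$ and descending, or equivalently by observing the identity forces $x \in \operatorname{im}(\jac H)|_{x=w}^{\,i}$ for all $i$, contradicting nilpotency of the constant matrix $(\jac H)|_{x=w}$. I would structure the final write-up as: (1) Euler identity $\jac H \cdot x = 2H$; (2) assume $\pe \ge 1$, write $(\jac H) w = x$; (3) differentiate to get $I_n = (\jac H)|_{x=w} + (\jac H)\jac w$; (4) iterate against nilpotency to reach $\tr I_n = 0$, the contradiction; (5) observe the argument is insensitive to whether $x$ is the generic point or a constant $v$, giving both claims at once.
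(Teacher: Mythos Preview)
Your differentiation of $(\jac H)\,w = x$ to obtain $I_n = (\jac H)|_{x=w} + (\jac H)\,\jac w$ is correct for the generic claim $\pe_{K(x)}(\jac H,x)=0$, and a contradiction does follow, though not quite via your trace/telescoping sketch: since $(\jac H)|_{x=w}$ is nilpotent, $I_n - (\jac H)|_{x=w}$ is invertible, forcing $(\jac H)\,\jac w$ to be invertible and hence $\jac H$ to be nonsingular, contradicting nilpotency. The characteristic worry you raise disappears once you argue via invertibility rather than trace.

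The real gap is the second assertion. Your step (5) claims the argument is insensitive to replacing $x$ by a constant $v \in K^n$, but it is not: when $v$ and its preimage $w$ are constants, there is nothing to differentiate, and the generic statement does not automatically specialize (the relevant rank conditions need not be preserved). The paper's proof avoids differentiation altogether by using the bilinear symmetry of the Jacobian of a quadratic homogeneous map: since the entries of $\jac H$ are linear in $x$, one has $(\jac H)|_{x=v}\cdot w = (\jac H)|_{x=w}\cdot v$ for all $v,w \in K(x)^n$ (equivalently, $\jac H \cdot y = (\jac H)|_{x=y}\cdot x$). Thus $(\jac H)|_{x=v}\,w = v$ immediately yields $(\jac H)|_{x=w}\,v = v$, making $v$ a $1$-eigenvector of the nilpotent matrix $(\jac H)|_{x=w}$; then $v = (\jac H)|_{x=w}^{\,n}\,v = 0$, a contradiction. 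This one-line swap handles both the generic point and every constant $v$ at once, with no characteristic hypothesis --- and it is the key identity you are missing.
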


\begin{proof}[Proof ({following the proof of \cite[Lem.\@ 2.10]{MR3178045}})]
It suffices to prove that 
$$
\pe_{K(x)}\big((\jac H)|_{x=v}, v\big) = 0 \mbox{ for all nonzero } v \in K(x)^n
$$
So assume that $\pe_{K(x)}\big((\jac H)|_{x=v}, v\big) \ge 1$ for some nonzero 
$v \in K(x)$.
Then there exists a $w \in K(x)^n$ such that $(\jac H)|_{x=v} \cdot w = v$. Since 
$\jac H \cdot y = (\jac H)|_{x=y} \cdot x$ for quadratic homogeneous $H$,
it follows that $(\jac H)|_{x=w} \cdot v = v$. So 
$$
v = (\jac H)|_{x=w}^n \cdot v = 0 \cdot v = 0
$$
Contradiction, so $\pe_{K(x)}\big((\jac H)|_{x=v}, v\big) = 0$. 
\end{proof}

\begin{theorem}
Assume $H \in K[x]^n$ such that $\jac H^n = 0$ and $\pe_{K(x)}(\jac H, x) \ge 1$. 
Then the rows of $\jac H$ are linearly independent over $K$
\end{theorem}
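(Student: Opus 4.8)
The plan is a short proof by contradiction. First note that $\jac H$ is nilpotent, since $\jac H^n = 0$; hence $\pe_{K(x)}(\jac H, x)$ is defined, and the hypothesis $\pe_{K(x)}(\jac H, x) \ge 1$ yields a vector $w \in K(x)^n$ with $(\jac H)\cdot w = x$. Indeed, if $(\jac H)^i w = x$ for some $i \ge 1$, then $(\jac H)\cdot\big((\jac H)^{i-1}w\big) = x$, so one may take $i = 1$.

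Next, suppose towards a contradiction that the rows $\nabla H_1, \nabla H_2, \ldots, \nabla H_n$ of $\jac H$ are linearly dependent over $K$. By the very definition of linear dependence this produces a nonzero vector $c = (c_1, c_2, \ldots, c_n) \in K^n$ with $\sum_{i=1}^n c_i \nabla H_i = 0$, i.e. $c\tp(\jac H) = 0$. The key step is then to multiply the relation $(\jac H)\cdot w = x$ on the left by $c\tp$, which gives
$$
\sum_{i=1}^n c_i x_i \;=\; c\tp x \;=\; c\tp(\jac H)\cdot w \;=\; 0\cdot w \;=\; 0 .
$$
Since $c \ne 0$, the left-hand side is a nonzero linear form in $K[x]$, contradicting the equality with $0$. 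Hence the rows of $\jac H$ are linearly independent over $K$.

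I do not expect a genuine obstacle: the whole argument is a one-line computation once the relation $(\jac H)\cdot w = x$ has been extracted from the preimage-exponent hypothesis. The only point needing (routine) care is the bookkeeping that ``the rows of $\jac H$ are $K$-linearly dependent'' is literally the same statement as ``$\jac H$ admits a nonzero left null vector with entries in $K$''. Note also that, beyond making $\pe_{K(x)}(\jac H, x)$ meaningful, the full strength of $\jac H^n = 0$ is not actually used.
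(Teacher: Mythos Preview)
Your proof is correct and follows essentially the same approach as the paper: from $\pe_{K(x)}(\jac H,x)\ge 1$ one extracts $w$ with $(\jac H)w=x$, and then any $K$-linear dependence among the rows of $\jac H$ forces the same $K$-linear dependence among $x_1,\ldots,x_n$, which is impossible. The paper states this in two sentences; you have simply made the left-multiplication by $c\tp$ explicit.
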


\begin{proof}
Since $x$ has a preimage under $y \mapsto \jac H \cdot y$, every dependence
between the rows of $\jac H$ is a dependence between the components of
$x$ as well. But the components of $x$ are linearly independent over $K$.
\end{proof}

\begin{corollary}
Assume $H \in K[x]^n$ such that $\jac H^{n-1} \cdot x = 0 = \jac H^n$ and 
$\rk \jac H = n-1$. Then the rows of $\jac H$ are linearly independent over $K$.
\end{corollary}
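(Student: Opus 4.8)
The plan is to reduce directly to the theorem immediately preceding this corollary, whose hypothesis asks for $\pe_{K(x)}(\jac H, x) \ge 1$. First observe that $\jac H$ is nilpotent, since $\jac H^n = 0$, and that $\cork \jac H = 1$, since $\rk \jac H = n-1$. A nilpotent matrix in $\Mat_n(K(x))$ of corank $1$ has a single Jordan block, necessarily of size $n$; this is exactly the situation discussed in the paragraph preceding Proposition \ref{Jordanadd}, where it is noted that $\ie(M,v) + \pe(M,v) = n-1$ for every nonzero $v \in K(x)^n$. Applying this identity to $M = \jac H$ and $v = x$ gives $\ie(\jac H, x) + \pe_{K(x)}(\jac H, x) = n-1$.

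Next I would use the remaining hypothesis $\jac H^{n-1} \cdot x = 0$: it says precisely that $\jac H^{i} \cdot x = 0$ for all $i \ge n-1$, hence $\ie(\jac H, x) \le n-2$, and therefore $\pe_{K(x)}(\jac H, x) = n-1-\ie(\jac H, x) \ge 1$. (For $n=1$ the hypotheses are contradictory, since then $\jac H^{n-1}\cdot x = x \ne 0$; so we may assume $n \ge 2$, where nothing further needs checking.)

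With $\jac H^n = 0$ and $\pe_{K(x)}(\jac H, x) \ge 1$ established, the preceding theorem applies verbatim and yields that the rows of $\jac H$ are linearly independent over $K$. I do not expect any genuine obstacle: all the work sits in the corank-one identity $\ie + \pe = n-1$ and in the preceding theorem, and this corollary only has to feed the correct inputs into them; the one mild point to keep in mind is to invoke the $\ie + \pe$ identity over $K(x)$ rather than over $K$.
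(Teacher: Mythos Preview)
Your proof is correct and follows essentially the same route as the paper: use the corank-one identity $\ie_{K(x)}(\jac H,x)+\pe_{K(x)}(\jac H,x)=n-1$, combine it with $\jac H^{n-1}\cdot x=0$ to force $\pe_{K(x)}(\jac H,x)\ge 1$, and then invoke the preceding theorem. Your write-up is in fact slightly more careful than the paper's, which writes ``$\jac H^{n-1}=0$'' where ``$\jac H^{n-1}\cdot x=0$'' is meant.
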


\begin{proof}
Since $\jac H$ is nilpotent of corank $1$, 
$\ie_{K(x)}(\jac H,x) + \pe_{K(x)}(\jac H,x) = n-1$ follows.
From $\jac H^{n-1} = 0$ we obtain $\ie_{K(x)}(\jac H,x) < n-1$, so 
$\pe_{K(x)}\allowbreak (\jac H,x) > 0$. Now apply the above theorem.
\end{proof}

\section{Dimension 4 with nilpotency}

\begin{theorem} \label{triangLK}
Let $H \in K[x]^n$, such that $\jac H$ is nilpotent. Let $L$ be an extension
field of $K$.

If $\jac H$ is similar over $L$ to a triangular
matrix, then $\jac H$ is similar over $K$ to a triangular
matrix.
\end{theorem}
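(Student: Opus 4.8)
The plan is to strip $\jac H$ down to its finitely many coefficient matrices, leverage the nilpotency hypothesis to make everything \emph{strictly} triangular, and then apply the classical fact that a nilpotent algebra of matrices is triangularizable over its ground field.

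\textbf{Reduction to the coefficient matrices.} Write $\jac H = \sum_{j=1}^{k} A_j m_j$, where the $m_j$ are finitely many distinct monomials in $x_1,\ldots,x_n$ and $A_j \in \Mat_n(K) \setminus \{0\}$; this is possible because $H \in K[x]^n$. For any field $F$ with $K \subseteq F$ and any constant matrix $T \in \GL_n(F)$, conjugation commutes with this decomposition, so $T^{-1}(\jac H)T = \sum_j (T^{-1}A_j T)\,m_j$. Since distinct monomials are linearly independent over $F$, the matrix $T^{-1}(\jac H)T$ is (strictly) upper triangular if and only if every $T^{-1}A_j T$ is (strictly) upper triangular. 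Conjugating by the permutation matrix reversing the order of the basis vectors turns lower triangular into upper triangular, so ``$\jac H$ is similar over $F$ to a triangular matrix'' is equivalent to ``$A_1,\ldots,A_k$ are simultaneously triangularizable over $F$.''

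\textbf{Using nilpotency, and passing to the generated algebra.} Suppose $T \in \GL_n(L)$ is such that $U := T^{-1}(\jac H)T$ is upper triangular. Then $U$ is nilpotent, being similar to the nilpotent matrix $\jac H$, so $U^{m} = 0$ for some $m$; the diagonal of $U^{m}$ consists of the $m$-th powers of the diagonal entries of $U$, and since $L[x]$ is an integral domain, the diagonal of $U$ vanishes. Hence $U$, and therefore each $T^{-1}A_j T$, is \emph{strictly} upper triangular. Let $\mathcal{A} \subseteq \Mat_n(K)$ be the (non-unital) associative algebra generated by $A_1,\ldots,A_k$; it is finite-dimensional and spanned over $K$ by products of the $A_j$. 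Then $T^{-1}(L \otimes_K \mathcal{A})T$ is contained in the algebra of strictly upper triangular $n \times n$ matrices, whose $n$-th power is zero, so $(L \otimes_K \mathcal{A})^n = 0$. Thus every product of $n$ of the $A_j$ vanishes in $\Mat_n(L)$; as such a product has entries in $K$, it vanishes in $\Mat_n(K)$ as well, so $\mathcal{A}^n = 0$: the algebra $\mathcal{A}$ is nilpotent already over $K$.

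\textbf{Descending to $K$.} A nilpotent subalgebra $\mathcal{A}$ of $\Mat_n(K)$ admits a complete flag $0 = V_0 \subset V_1 \subset \cdots \subset V_n = K^n$ with $\mathcal{A}V_i \subseteq V_{i-1}$ for all $i$: indeed $\mathcal{A}K^n \ne K^n$ (otherwise $\mathcal{A}^n K^n = K^n \ne 0$), so one may choose a hyperplane $V_{n-1} \supseteq \mathcal{A}K^n$, note $\mathcal{A}$ restricts to a nilpotent algebra on $V_{n-1}$, and induct on $n$. Choosing $T \in \GL_n(K)$ whose columns form a basis adapted to such a flag makes each $T^{-1}A_j T$ strictly upper triangular, hence $T^{-1}(\jac H)T = \sum_j (T^{-1}A_j T)\,m_j$ is strictly upper triangular. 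Therefore $\jac H$ is similar over $K$ to a triangular matrix.

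\textbf{Where the difficulty lies.} The nilpotency of $\jac H$ is used in exactly one essential place: to kill the diagonal of $U$ and thereby obtain \emph{strict} triangularity. This matters because naive flag descent is false in general --- a single matrix with an eigenvalue outside $K$ is triangularizable over $\overline{K}$ but not over $K$ --- so one cannot work with an arbitrary invariant flag; instead one must reduce to the algebra generated by the coefficient matrices, use strict triangularity to see that this algebra is nilpotent, and then exploit that nilpotency of an algebra of matrices with entries in $K$ is a field-independent condition together with the fact that nilpotent matrix algebras are (strictly) triangularizable over the ground field. Everything else is bookkeeping.
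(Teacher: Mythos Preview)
Your proof is correct and genuinely different from the paper's. The paper does not argue directly at all: it simply invokes \cite[Cor.\@ 2.2]{MR3177043}, observing that condition (1) there is field-independent, whence so is condition (2), and specializes to $r=n$. You instead give a self-contained argument: decompose $\jac H$ as $\sum_j A_j m_j$ with $A_j \in \Mat_n(K)$, note that conjugation by a constant $T$ acts coefficient-wise so that triangularizing $\jac H$ is equivalent to simultaneously triangularizing the $A_j$, use nilpotency of $\jac H$ together with integrality of $L[x]$ to force the triangularization to be \emph{strict}, conclude that the $K$-algebra $\mathcal{A}$ generated by the $A_j$ is nilpotent (a condition visibly independent of the ambient field), and finally build a $K$-flag for $\mathcal{A}$ by the standard induction. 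What the paper's approach buys is brevity and a pointer to a more general framework; what yours buys is transparency --- in particular, it makes explicit exactly where the nilpotency hypothesis is consumed (killing the diagonal so that the coefficient algebra, not merely each coefficient matrix, becomes nilpotent), which the paper's proof leaves hidden inside the cited result.
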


\begin{proof}
Since (1) of \cite[Cor.\@ 2.2]{MR3177043} does not depend on the base field,
it follows from \cite[Cor.\@ 2.2]{MR3177043} that (2) of 
\cite[Cor.\@ 2.2]{MR3177043} does not depend on the base field either. 
By taking $r = n$, we see that $\jac H$ is similar over $K$ to a triangular
matrix, if and only if $\jac H$ is similar over $L$ to a triangular
matrix.
\end{proof}

\begin{theorem} \label{Jordan5}
Let $H \in K[x]^n$, such that $\jac H^s = 0$, but $\jac H^{s-1} \cdot x \ne 0$. 
Then there exists an extension field $L$ of $K$, such that corollary \ref{Jordancor} 
holds if we replace $K$ by $L$. 

Suppose in addition that $\rk \jac H = s-1$.
Take any $T \in \GL_n(L)$ and define $\tilde{H} := T^{-1} H(Tx)$. Let $f$
be the square-free part of any $L$-linear combination of the components of
$\jac \tilde{H}^{s-1} \cdot x$ which is nonzero. 

Then corollary \ref{Jordancor} holds over $K$ as well in the following cases:
\begin{enumerate}[\upshape (i)]

\item $\#K \ge \deg f + 1$;

\item $f$ is homogeneous and $\#K \ge \deg f$.
 
\end{enumerate}
\end{theorem}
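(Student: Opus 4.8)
The plan is to treat the two assertions separately. For the first I would take $L$ to be any infinite extension field of $K$ (say $L=\overline{K}$, or $L=K$ when $K$ is already infinite): the hypotheses $\jac H^s=0$ and $\jac H^{s-1}\cdot x\ne 0$ are a matrix identity and a matrix non-identity over $K$, hence remain valid over $L$, so corollary \ref{Jordancor} applies verbatim over $L$.

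For the second assertion I would first reduce it to a single non-vanishing statement: it suffices to produce one point $v\in K^n$ with $q|_{x=v}\ne 0$, where $q:=\jac H^{s-1}\cdot x\in K[x]^n$. Given such a $v$, put $M:=(\jac H)|_{x=v}\in\Mat_n(K)$. Since substitution is a ring homomorphism, $M^s=(\jac H^s)|_{x=v}=0$ and $M^{s-1}v=(\jac H^{s-1})|_{x=v}\cdot v=q|_{x=v}\ne 0$; so $M$ is nilpotent with $\ie(M,v)=s-1$, every Jordan block of $M$ has size at most $s$ (from $M^s=0$), and one has size exactly $s$ (from $M^{s-1}\ne 0$). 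As $\rk M\le\rk\jac H=s-1$ and the number of Jordan blocks of $M$ is $\cork M=n-\rk M$, a count of block sizes forces $\rk M=s-1$ and the Jordan normal form of $M$ to consist of one size-$s$ nilpotent block and $n-s$ zero blocks — the same as for $\jac H$. Now I would feed $M$ and $v$ into proposition \ref{Jordanadd}: with $N$ the Jordan normal form and $T_0^{-1}v=e_{i_1}+\cdots+e_{i_m}$, the requirement $\ie(N,e_{i_m})=s-1$ pins $e_{i_m}$ at the top of the size-$s$ block, so $\pe(N,e_{i_m})=0$, and strict monotonicity of the preimage exponents forces $m=1$; a block permutation moving the size-$s$ block to the upper left then yields $T\in\GL_n(K)$ with $Te_1=v$ and $\big(\jac(T^{-1}H(Tx))\big)|_{x=e_1}=T^{-1}MT$ in the Jordan form with a size-$s$ block in the top left corner. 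This is exactly the argument that proves corollary \ref{Jordancor}, with our $v$ in place of the vector supplied by theorem \ref{Jordanth}; only that theorem needed $K$ infinite.

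Next I would extract such a $v$ from the hypothesis on $f$. The matrix $T$ turns out to be cosmetic: from $\jac\tilde{H}^{s-1}\cdot x=T^{-1}\big(\jac H^{s-1}\cdot x\big)|_{x=Tx}=T^{-1}\,q(Tx)$ one sees that every nonzero $L$-linear combination of the components of $\jac\tilde{H}^{s-1}\cdot x$ equals $p(Tx)$ for some nonzero $L$-linear combination $p=\sum_i\lambda_i q_i$ of the components of $q$; and since $x\mapsto Tx$ is a degree-preserving $L$-algebra automorphism of $L[x]$ that permutes irreducibles, the square-free part of $p(Tx)$ equals $g(Tx)$, where $g$ is the square-free part of $p$, so it has the same degree and the same homogeneity status as $g$. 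Hence the hypothesis of (i), respectively (ii), says precisely that some nonzero combination $p=\sum_i\lambda_i q_i$ has square-free part $g$ with $\deg g\le\#K-1$, respectively $g$ homogeneous with $\deg g\le\#K$. I would then argue by contradiction: if $q$ vanished at every point of $K^n$, so would every $q_i$, hence $p$, hence $g$ (which has the same zero locus as $p$). But a nonzero $g\in L[x]$ of degree $\le\#K-1$ cannot vanish on all of $K^n$ — expand $g=\sum_k\alpha_k g_k$ with $\{\alpha_k\}$ a $K$-basis of the $K$-span of its coefficients and $g_k\in K[x]$ not all zero; any common $K$-zero of $g$ is a common zero of all the $g_k$, and some nonzero $g_k$ has degree $\le\#K-1$, contradicting \cite[Lemma 5.1 (i)]{1310.7843} — while in the homogeneous case the $g_k$ are homogeneous of degree $\le\#K$ and \cite[Lemma 5.1 (ii)]{1310.7843} gives the contradiction. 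This produces the desired $v$.

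The step I expect to be the main obstacle is the reduction in the second paragraph: recognizing that the whole content of corollary \ref{Jordancor} over a possibly finite field is concentrated in the single condition $q|_{x=v}\ne 0$ at a $K$-point, and checking — via proposition \ref{Jordanadd} and the Jordan block count, where $\rk\jac H=s-1$ is used — that this condition indeed reconstructs the Jordan form at $v$. Eliminating $T$ and descending from $L$-coefficients back to $K$ with \cite[Lemma 5.1]{1310.7843} is then routine, once one notices that $T$ does not change the degree of the square-free part.
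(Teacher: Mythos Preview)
Your proof is correct and follows essentially the same route as the paper: both reduce the second assertion to finding a point $v\in K^n$ with $\big((\jac H)^{s-1}\cdot x\big)\big|_{x=v}\ne 0$, then verify that $M:=(\jac H)|_{x=v}$ has the same Jordan normal form as $\jac H$ (one size-$s$ block, $n-s$ size-$1$ blocks) with $\ie(M,v)=s-1$ and $\pe(M,v)=0$, and finish by the argument of proposition~\ref{Jordanadd} / corollary~\ref{Jordancor}.

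The only organizational difference is in how the point $v$ is produced. The paper applies \cite[Lemma 5.1]{1310.7843} directly to $f(T^{-1}x)\in L[x]$ to obtain $v\in K^n$ with $f(T^{-1}v)\ne 0$, and then computes $T\cdot(\jac\tilde{H})^{s-1}\cdot x\big|_{x=T^{-1}v}=(\jac H)^{s-1}\cdot x\big|_{x=v}$ to conclude. You instead first unwind $T$ to identify $f$ with $g(Tx)$ for $g$ the square-free part of an $L$-linear combination $p$ of the components of $q=(\jac H)^{s-1}\cdot x$, and then descend from $L$ to $K$ by writing $g=\sum_k\alpha_k g_k$ in a $K$-basis before invoking \cite[Lemma 5.1]{1310.7843} on a nonzero $g_k\in K[x]$. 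Your route is a bit longer but has the merit of making the $L$-to-$K$ descent fully explicit; the paper relies on the fact that the Schwartz--Zippel-type lemma in \cite[Lemma 5.1]{1310.7843} already yields a $K$-point for a polynomial with coefficients in an extension field.
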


\begin{proof}
Since the conditions of corollary \ref{Jordancor} are fulfilled up to assumptions
on the cardinality of $K$, an $L$ as given indeed exists.

From \cite[Lemma 5.1]{1310.7843}, it follows that there exists a $v \in K^n$
such that $f(T^{-1}v) \ne 0$. As $f$ is the square-free part of an $L$-linear 
combination of $T \cdot \jac \tilde{H}^{s-1} \cdot x$, $f(T^{-1}v)$ is
the square-free part of an $L$-linear combination of
\begin{align*}
T \big((\jac \tilde{H})^{s-1} x \big)\big|_{x=T^{-1}v} 
&= \big((T \cdot \jac \tilde{H} \cdot T^{-1})^{s-1} \cdot Tx\big) \big|_{x=T^{-1}v} \\
&= \big((\jac H)^{s-1} \cdot x\big)|_{x=v}
\end{align*}
Consequently, $\big((\jac H)^{s-1} \cdot x\big)\big|_{x=v} \ne 0$ along with $f(T^{-1}v)$.

Since $\big((\jac H)^{s-1} \cdot x\big)\big|_{x=v} \ne 0$,
$(\jac H)^{s-1} \cdot x \ne 0$ as well. As $\rk \jac H = s-1$,
this is only possible if the Jordan Normal 
form of $\jac H$ has one block of size $s$ and $n - s$ blocks of size $1$.
Furthermore, $M := (\jac H)|_{x=v}$ has the same Jordan Normal Form as $\jac H$,
$\ie_{K(x)}(\jac H,x) = s-1 = \ie(M,v)$ and $\pe_{K(x)}(\jac H,x) = 0 = \pe(M,v)$.
The rest of the proof is similar to the end of the proof of theorem \ref{Jordanth}.
\end{proof}

\begin{theorem} \label{qt}
Let $H \in K[x]^n$ be of degree $d$, such that $\jac H \cdot H = 0$. 
Suppose that the characteristic of $K$ is either zero or larger than $d$.

Then $H(x+tH) = H$. In particular, $\jac H$ is nilpotent. Furthermore,
$\trdeg_K K(H) \le \max\{n-2,1\}$ if $H$ is homogeneous.
\end{theorem}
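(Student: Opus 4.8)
The plan is to establish the three assertions in order; the first two are short, and the transcendence-degree bound is where the substance lies.

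\textbf{The identity $H(x+tH)=H$.} Introduce a second tuple $u=(u_1,\dots,u_n)$ of variables and set $P_k(x,u):=\bigl(\sum_j u_j\,\partial/\partial x_j\bigr)^k H_i\in K[x,u]$, so $P_0=H_i$ and $P_{k+1}=\sum_j u_j\,\partial P_k/\partial x_j$. Because $H_i$ has degree $\le d$ and $1/1!,\dots,1/d!$ lie in $K$ (this is exactly where $\chr K=0$ or $\chr K>d$ enters), the finite Taylor expansion $H_i(x+tu)=\sum_{k\ge0}\frac{t^k}{k!}P_k(x,u)$ is a legitimate polynomial identity; substituting $u=H(x)$ turns it into $H_i(x+tH)=\sum_{k\ge0}\frac{t^k}{k!}P_k(x,H(x))$, so it suffices to show $P_k(x,H(x))=0$ for $k\ge1$. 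Now $P_1(x,H)=\sum_j H_j\,\partial H_i/\partial x_j=(\jac H\cdot H)_i=0$ by hypothesis, and I would finish by induction: differentiating $P_k(x,H(x))=0$ with respect to $x_m$ and using the chain rule gives $(\partial P_k/\partial x_m)(x,H)=-\sum_j(\partial P_k/\partial u_j)(x,H)\,\partial H_j/\partial x_m$, whence $P_{k+1}(x,H)=\sum_m H_m\,(\partial P_k/\partial x_m)(x,H)=-\sum_j(\partial P_k/\partial u_j)(x,H)\,(\jac H\cdot H)_j=0$.

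\textbf{Nilpotency, and the reduction of the last claim.} Differentiating $H(x+tH)=H$ with respect to $x$ gives, by the chain rule, $(\jac H)|_{x=x+tH}\,(I_n+t\,\jac H)=\jac H$; since $I_n+t\,\jac H$ is a unit of $\Mat_n(K[x][[t]])$, this rearranges to $(\jac H)|_{x=x+tH}=\sum_{k\ge0}(-t)^k(\jac H)^{k+1}$, and as the left side has $t$-degree $\le d-1$ the series must terminate, giving $(\jac H)^{d+1}=0$; in particular $\jac H$ is nilpotent. For the bound $\trdeg_K K(H)\le\max\{n-2,1\}$ (now with $H$ homogeneous) I would first observe that $K(x)/K(H_1,\dots,H_n)$ is separable: some $H_i$ is nonconstant of degree $\le d<\chr K$, hence has $\partial H_i/\partial x_j\ne0$ for some $j$, and then $x_j$ is separably algebraic over $K(H_i,x_1,\dots,x_{j-1},x_{j+1},\dots,x_n)$ — its minimal polynomial there has degree $\le d$, so is not a polynomial in $x_j^{\chr K}$ — while that field is purely transcendental over $K(H_i)$; passing to the subextension $K(H)$ preserves separability. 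By \cite[Th.\@ 1.3]{1501.06046} this gives $\rk\jac H=\trdeg_K K(H)$, and since $\jac H\cdot H=0$ with $H\ne0$ forces $\rk\jac H\le n-1$, the cases $n\le2$ are settled; so it remains to prove $\rk\jac H\le n-2$ for $n\ge3$, i.e.\ $\dim_{K(x)}\ker\jac H\ge2$.

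\textbf{From a two-dimensional kernel to the bound.} Granting $\dim_{K(x)}\ker\jac H\ge2$, pick $G\in\ker\jac H$ that is $K(x)$-linearly independent of $H$ (recall $H\in\ker\jac H$). Then $\sum_j H_j\,\partial/\partial x_j$ and $\sum_j G_j\,\partial/\partial x_j$ are two $K(x)$-linearly independent derivations of $K(x)$, and each kills every $H_i$ (for $G$: $\sum_j G_j\,\partial H_i/\partial x_j=(\jac H\cdot G)_i=0$), hence kills $K(H)$. Since $K(x)/K(H)$ is separable, the $K(x)$-module of derivations of $K(x)$ trivial on $K(H)$ has dimension $n-\trdeg_K K(H)$, so $n-\trdeg_K K(H)\ge2$, as wanted.

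\textbf{The main obstacle.} What is genuinely hard is the remaining claim: $\rk\jac H\ne n-1$ for $n\ge3$, equivalently (since $\jac H$ is nilpotent) that $\jac H$ is not a single Jordan block over $K(x)$. When $n\ge d+2$ this is immediate from $(\jac H)^{d+1}=0$. In the range $3\le n\le d+1$ one must exploit homogeneity: Euler's identity $\jac H\cdot x=dH$ (valid since $\chr K\nmid d$) shows $x$ and $H$ are $K(x)$-independent (if $H=\lambda x$ then Euler gives $\jac H\cdot H=d\lambda^2x$, forcing $\lambda=0$) and $(\jac H)^2x=0$, so $\ker\!\bigl((\jac H)^2\bigr)=K(x)x\oplus K(x)H$. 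Were $\jac H$ a single Jordan block, its column space would be $\ker\!\bigl((\jac H)^{n-1}\bigr)\supseteq\ker\!\bigl((\jac H)^2\bigr)$, which for $n=3$ forces each column $\partial H/\partial x_j$ of $\jac H$ to have the form $a_jx+b_jH$ with $a_j,b_j\in K(x)$; then imposing the integrability conditions $\partial_k\partial_j H_i=\partial_j\partial_k H_i$ together with Euler's identity forces the $H_i$ to be pairwise proportional (a short computation), contradicting $\rk\jac H=2$. For $4\le n\le d+1$ — which, with $n=3$, covers all the $(d,n)$ relevant to the applications of this paper (where $d\le4$) — a heavier argument is needed; the route I would try is to first show that a single Jordan block forces $V(H_1,\dots,H_n)$ to have codimension $1$, so that the $H_i$ share a common factor $g$ and $H=g\tilde H$ with $\deg\tilde H<d$, and then descend by induction on $d$ (or invoke Gordan--Noether-type structural results for homogeneous maps with $\jac H\cdot H=0$). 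It is this "extract the common factor and descend" step that I expect to be the crux.
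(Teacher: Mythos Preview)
Your arguments for $H(x+tH)=H$ and for the nilpotency of $\jac H$ are correct and in fact more explicit than the paper's, which simply cites \cite[Prop.\@ 1.3, Lems.\@ 1.1, 1.2]{1501.05168}. The separability argument is also essentially fine.

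The problem is the transcendence-degree bound. Your route---reduce to $\rk\jac H\le n-2$ and then rule out the single-Jordan-block case by hand---is, as you yourself say, incomplete: you cover $n\ge d+2$ trivially, give only an unexecuted ``short computation'' for $n=3$, and for $4\le n\le d+1$ you offer only a plan (``extract a common factor and descend''). So there is a genuine gap, and the missing idea is not a small computation but the whole mechanism for handling the range $3\le n\le d+1$.

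The paper takes a completely different and much shorter path that avoids Jacobian ranks, separability, and Jordan forms entirely. It exploits $H(x+tH)=H$ once more: if $f$ is any irreducible factor of some $H_i$, then $f(x+tH)\mid H_i(x+tH)=H_i$, which forces $\deg_t f(x+tH)=0$, i.e.\ $f(x+tH)=f$. Since $H$ is homogeneous, so is $f$, and the leading $t$-coefficient of $f(x+tH)$ is $f(H)$; hence $f(H)=0$. Thus \emph{every} irreducible factor of \emph{every} $H_i$ gives an algebraic relation on $H$. If $\trdeg_K K(H)\ge 2$, then the $H_i$ cannot all be built from a single irreducible (else $K(H)\subseteq K(p)$ for one $p$), so there are two coprime such factors $f_1,f_2$; viewed in $K[y_1,\dots,y_n]$ they generate an ideal of height $\ge2$, whence $\trdeg_K K(H)\le n-2$. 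This works uniformly in $n$ and $d$, uses only $H(x+tH)=H$ and unique factorization, and makes your Jordan-block analysis unnecessary.
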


\begin{proof}
The proofs of \cite[Prop.\@ 1.3]{1501.05168} and 
\cite[Lems.\@ 1.1 and 1.2]{1501.05168} for characteristic zero are still 
valid if $\deg f \le d$. Consequently, $H(x+tH) = H$ and $\jac H$ is nilpotent. 

So assume that $H$ is homogeneous. Let $f$ be a divisor of
$H_i$ for some $i$. Then $f(x+tH) \mid H_i(x+tH) = H_i$. Hence
$\deg_t f(x+tH) = 0$ and $f(x+tH) - f = 0$. Since $H$ is homogeneous,
it follows that $f$ is homogeneous as well. Consequently, we can look 
at the leading coefficient of $t$ in $f(x+tH) - f$, to deduce that
$f(H) = 0$.

Now suppose that $\trdeg_K K(H) \ge 2$. Then there are two
polynomials $f \in K[y]$ as in the above paragraph, which are 
relatively prime. Since $K[y]$ is a unique factorization domain, we
deduce that the ideal generated by these two polynomial has height
at least two, so $\trdeg_K K(H) \le n-2$. Hence
$\trdeg_K K(H) \le \max\{n-2,1\}$.
\end{proof}

\begin{theorem} \label{cubhmgdim4th}
Suppose that $\frac16 \in K$. Let $n = 4$ and $H \in K[x]^4$ be 
cubic homogeneous, such that $\jac H$ is nilpotent.

If $\jac H$ is not similar over $K$ to a 
triangular matrix, then there exists a $T \in \GL_n(K)$ such that for 
$\tilde{H} := T^{-1} H(Tx)$,
\begin{align*}
\tilde{H}_1 &- (x_1x_3x_4-x_2x_4^2) \in K[x_3,x_4]  \\
\tilde{H}_2 &- (x_1x_3^2-x_2x_3x_4) \in K[x_3,x_4]  \\
\tilde{H}_3 &\in K[x_4] \qquad \mbox{and} \qquad \tilde{H}_4 = 0
\end{align*}
Furthermore, $x + H$ is invertible and $(x + H,x_5)$ is even tame.
\end{theorem}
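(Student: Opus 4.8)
The plan is to argue by cases on $r := \rk \jac H$, which also equals $\rk_{K(x)} \jac H$. Since $\tfrac16 \in K$, the characteristic of $K$ is $0$ or exceeds $3$, so Euler's formula gives $\jac H \cdot x = 3H$. If $r = 0$ then $H = 0$; if $r = 1$ then part (i) of theorem \ref{uporkle2} yields a $T \in \GL_4(K)$ with $T^{-1}H(Tx) = (h_1(x_2,x_3,x_4),0,0,0)$, so $T^{-1}(\jac H)T$ is strictly upper triangular. In either case $\jac H$ is similar over $K$ to a triangular matrix, against the hypothesis, so $r \in \{2,3\}$. If $r = 2$, part (ii) of theorem \ref{uporkle2} directly gives a $T$ for which $\tilde H := T^{-1}H(Tx)$ satisfies $\tilde H_1 - (x_1x_3x_4 - x_2x_4^2) \in K[x_3,x_4]$, $\tilde H_2 - (x_1x_3^2 - x_2x_3x_4) \in K[x_3,x_4]$ and $\tilde H_3 = \tilde H_4 = 0$; in particular $\tilde H_3 \in K[x_4]$, and the invertibility and tameness assertions are postponed to the end.

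So the heart of the matter is $r = 3$, and this case is the main obstacle. Now $\jac H$ is a nilpotent $4 \times 4$ matrix over $K(x)$ of rank $3$, hence its Jordan normal form $N$ is a single Jordan block of size $4$. First I would note that $\ie_{K(x)}(\jac H, x) \ge 2$: otherwise $\jac H \cdot \jac H \cdot x = 3\,\jac H \cdot H = 0$, so $\jac H \cdot H = 0$ (the characteristic does not divide $3$), whence theorem \ref{qt} gives $\trdeg_K K(H) \le \max\{n-2,1\} = 2$, contradicting $r \le \trdeg_K K(H)$. Next apply theorem \ref{Jordanth}: because $N$ has a single block, $\ie(N,\cdot)$ and $\pe(N,\cdot)$ are complementary on its standard unit vectors, so the two strictly increasing chains appearing there can only have length $m = 1$; thus one may assume $(\jac \tilde H)|_{x = e_k} = N$ with $k = n - \ie_{K(x)}(\jac H, x) \in \{1,2\}$. (If $\ie_{K(x)}(\jac H, x) = 3$, so $k = 1$, corollary \ref{Jordancor} may be invoked instead.) All of this can be carried out over a suitably large extension field of $K$; theorem \ref{triangLK} ensures that triangularizability is insensitive to the base field, and theorem \ref{Jordan5} (together with \cite[Lemma 5.1]{1310.7843}) is what allows the eventual normal form to be descended back to $K$ under a mild cardinality condition.

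With $(\jac \tilde H)|_{x = e_k} = N$, $k \in \{1,2\}$, what remains is a finite computation of the same nature as the one in the proof of theorem \ref{rkle2}: parametrize the cubic homogeneous $\tilde H$ by its coefficients, impose the $16$ linear equations $(\jac \tilde H)|_{x = e_k} = N$ together with the identical vanishing of all coefficients of the characteristic polynomial of $\jac \tilde H$ except the leading one (nilpotency), and read off that, after one more linear conjugation over $K$, every solution is either similar over $K$ to a triangular matrix or of the shape
\begin{align*}
\tilde H_1 - (x_1x_3x_4 - x_2x_4^2) &\in K[x_3,x_4], & \tilde H_2 - (x_1x_3^2 - x_2x_3x_4) &\in K[x_3,x_4], \\
\tilde H_3 &\in K[x_4], & \tilde H_4 &= 0.
\end{align*}
Since by hypothesis $\jac H$ is not similar over $K$ to a triangular matrix, $\tilde H$ is then of the displayed shape. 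Carrying out this computation, and carrying its conclusion down from the extension field to $K$, is the technical core of the proof.

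It remains to prove the last two assertions, which I would do uniformly. Conjugation by an element of $\GL_4(K)$, respectively $\GL_5(K)$, preserves invertibility and tameness, so we may assume $H = \tilde H$ is in the displayed form; write $\ell := x_1x_3 - x_2x_4$, so that $\tilde H = (x_4\ell + g_1(x_3,x_4),\; x_3\ell + g_2(x_3,x_4),\; c x_4^3,\; 0)$ for suitable $g_1, g_2 \in K[x_3,x_4]$ and $c \in K$. Then $x + \tilde H$ is invertible: the $x_4$- and $x_3$-components are triangular, while the first two components constitute a linear system in $x_1, x_2$ over $K[x_3,x_4]$ whose $2 \times 2$ coefficient matrix has determinant $1$ (its non-identity part has zero trace and zero determinant), so $x_1, x_2$ can be solved for. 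Finally,
\begin{align*}
(x + \tilde H,\, x_5) = {}& \big(x_1 + g_1(x_3,x_4),\; x_2 + g_2(x_3,x_4),\; x_3 + c x_4^3,\; x_4,\; x_5\big) \\
& {}\circ \big(x_1 + x_4\ell,\; x_2 + x_3\ell,\; x_3,\; x_4,\; x_5\big),
\end{align*}
whose first factor is a composite of elementary maps and whose second factor is tame by lemma \ref{tamelm} (specialized at $a = x_3$, $b = x_4$, $c = 1$ and regarded as fixing $x_5$); hence $(x+\tilde H, x_5)$, and therefore $(x+H,x_5)$, is tame.
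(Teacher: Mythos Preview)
Your proposal is correct and follows essentially the same route as the paper: reduce $\rk\jac H\le 2$ to theorem~\ref{uporkle2}, eliminate $\ie_{K(x)}(\jac H,x)\le 1$ via theorem~\ref{qt}, normalize the remaining rank-$3$ cases $\ie\in\{2,3\}$ using theorems~\ref{Jordanth}/\ref{Jordan5} and~\ref{triangLK}, and defer the core to a finite (Maple) computation. You supply more detail than the paper in two places---the reason $m=1$ in theorem~\ref{Jordanth} when the Jordan form is a single block, and an explicit factorization for the tameness of $(x+\tilde H,x_5)$ via lemma~\ref{tamelm}---but these only make explicit what the paper leaves to the reader or to the phrase ``in a similar manner as the last claim of theorem~\ref{uporkle2}''.
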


\begin{proof}
The case where $\rk \jac H \le 2$ follows from theorem \ref{uporkle2},
so assume that $\rk \jac H = 3$. 
Using theorems \ref{Jordanth}, \ref{triangLK} and \ref{Jordan5},
the cases $\ie_{K(x)}(\jac H, x) = 3$ and $\ie_{K(x)}(\jac H, x) = 2$ have been computed
with Maple 8, see {\tt dim4cub.pdf}.

So assume that $\ie_{K(x)}(\jac H, x) \le 1$. Then $\jac H \cdot H = 
\frac13 (\jac H)^2 \cdot x = 0$. On account of theorem \ref{qt},
$\trdeg_K K(H) \le 4-2 = 2$. This contradicts $\rk \jac H = 3$
by way of $\rk \jac H \le \trdeg_K K(H)$.

The last claim follows in a similar manner as the last claim of theorem 
\ref{uporkle2}.
\end{proof}

\begin{corollary} \label{cubhmgdim4cor}
Suppose that $\frac16 \in K$. Let $n = 3$ and $H \in K[x]^3$ be 
cubic, such that $\jac H$ is nilpotent.

If $\jac H$ is not similar over $K$ to a 
triangular matrix, then there exists a $T \in \GL_n(K)$ such that for 
$\tilde{H} = T^{-1} H(Tx)$,
\begin{align*}
\tilde{H}_1 &- (x_1x_3-x_2)\hphantom{x_3} \in K[x_3]  \\
\tilde{H}_2 &- (x_1x_3^2-x_2x_3) \in K[x_3]  \\
\tilde{H}_3 &\in K
\end{align*}
Furthermore, $x + H$ is invertible and $(x + H,x_4)$ is even tame.
\end{corollary}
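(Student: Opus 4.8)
The plan is to reduce the 3-dimensional cubic (not necessarily homogeneous) case to the 4-dimensional cubic homogeneous case treated in Theorem \ref{cubhmgdim4th}, via the standard homogenization trick. First I would write $H = H^{(3)} + H^{(2)} + H^{(1)} + H^{(0)}$ as a sum of homogeneous parts in $x_1,x_2,x_3$. Since $\jac H$ is nilpotent, so is its highest-degree (quadratic) part $\jac H^{(3)}$, and indeed nilpotency of $\jac H$ forces the lower parts to be controlled as well; but rather than analyze the parts directly, I would pass to the homogenization $\hat{H} \in K[x_1,x_2,x_3,x_4]^4$ defined by $\hat{H}_i(x_1,x_2,x_3,x_4) := x_4^3\, H_i(x_1/x_4, x_2/x_4, x_3/x_4)$ for $i=1,2,3$ and $\hat{H}_4 := 0$. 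Then $\hat{H}$ is cubic homogeneous in four variables, $\hat{H}|_{x_4=1} = (H,0)$, and a direct chain-rule computation shows that $\jac_{x_1,\ldots,x_4}\hat{H}$ is nilpotent: its restriction to $x_4 = 1$ is the block matrix $\left(\begin{smallmatrix} \jac H & * \\ 0 & 0\end{smallmatrix}\right)$ (the last row is zero because $\hat{H}_4 = 0$, and the last column's effect on nilpotency is killed by the zero last row), so its eigenvalues are those of $\jac H$ together with $0$, all zero; since nilpotency is a Zariski-closed condition and holds on $x_4 = 1$, it holds identically. This is the step I expect to require the most care — one must check that homogenization genuinely preserves nilpotency of the Jacobian, which is where the hypothesis that $H$ is cubic (degree exactly matching the homogenization exponent, so no spurious $x_4$-dependence appears in the wrong place) is used, together with $\chr K \ne 2,3$ implicit in $\frac16 \in K$.

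Next I would verify that $\jac_{x_1,\ldots,x_4} \hat{H}$ is similar over $K$ to a triangular matrix if and only if $\jac_{x_1,x_2,x_3} H$ is. The "only if" direction is the substantive one: if a linear conjugation triangularizes the 4-variable Jacobian, I would argue — using Theorem \ref{triangLK} to work over a large enough field, and the structure of the $(s+1)$-type reductions used repeatedly in the excerpt — that restricting to $x_4 = 1$ and projecting off the (zero) fourth coordinate triangularizes $\jac H$; conversely, a triangularizing $T \in \GL_3(K)$ for $\jac H$ extends by $1$ in the last coordinate to triangularize $\jac\hat{H}$, since the fourth row of $\jac\hat{H}$ is already zero. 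Hence the hypothesis "$\jac H$ not similar over $K$ to a triangular matrix" transfers to $\hat{H}$.

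Then I would apply Theorem \ref{cubhmgdim4th} to $\hat{H}$: there is a $T \in \GL_4(K)$ with $T^{-1}\hat{H}(Tx)$ having the stated normal form with $\tilde{H}_1 - (x_1x_3x_4 - x_2x_4^2) \in K[x_3,x_4]$, $\tilde{H}_2 - (x_1x_3^2 - x_2x_3x_4) \in K[x_3,x_4]$, $\tilde{H}_3 \in K[x_4]$, $\tilde{H}_4 = 0$. Since $\hat{H}_4 = 0$ and the normal form also has fourth component zero, $T$ can be chosen to fix the $x_4$-direction (here one uses that the fourth row being identically zero means the reduction never needs to touch column/row $4$ nontrivially; alternatively, observe that $T$ necessarily preserves the kernel structure forcing this). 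Now dehomogenize by setting $x_4 = 1$: $\tilde{H}_1 - (x_1x_3 - x_2) \in K[x_3]$, $\tilde{H}_2 - (x_1x_3^2 - x_2x_3) \in K[x_3]$, $\tilde{H}_3 \in K$, which is exactly the claimed form, with the $3 \times 3$ matrix $T|_{x_4=1}$ (a block of the original $T$) serving as the required element of $\GL_3(K)$. Finally, the invertibility of $x + H$ and tameness of $(x+H, x_4)$ follow from the corresponding statements for $x + \hat{H}$ in Theorem \ref{cubhmgdim4th} specialized at $x_4 = 1$: a tame decomposition of $(x_1,\ldots,x_4) + (\hat{H},0)$ composed with the substitution $x_4 = 1$ (and adjoining a passive variable) yields a tame decomposition in the required sense, exactly as in the last-claim arguments of Theorems \ref{uporkle2} and \ref{cubhmgdim4th}.
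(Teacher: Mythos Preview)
Your approach is the same as the paper's: homogenize to $\hat{H} \in K[x_1,\ldots,x_4]^4$, apply Theorem~\ref{cubhmgdim4th}, then dehomogenize by setting $x_4 = 1$. However, there is a genuine gap at the step where you assert that the conjugating $T \in \GL_4(K)$ ``can be chosen to fix the $x_4$-direction.'' Your justification (``the fourth row being identically zero means the reduction never needs to touch column/row $4$ nontrivially'') is not valid: Theorem~\ref{cubhmgdim4th} hands you some $T$ with no a priori control over its last row or column, and the fact that both $\hat{H}_4$ and $(T^{-1}\hat{H}(Tx))_4$ vanish only tells you that the last row of $T^{-1}$ annihilates the vector $(\hat{H}_1,\hat{H}_2,\hat{H}_3,0)$. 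That forces the last row of $T^{-1}$ to be a multiple of $e_4$ \emph{only when $H_1,H_2,H_3$ are linearly independent over $K$}, a case distinction you never make. The paper splits on exactly this: in the independent case it deduces that the last row of $T^{-1}$ (hence also of $T$) is proportional to $e_4$, and then right-multiplies $T$ by an explicit $4\times 4$ matrix, depending on $v = T^{-1}e_4$, which forces block form $\bigl(\begin{smallmatrix} * & 0 \\ 0 & 1\end{smallmatrix}\bigr)$ \emph{while preserving the normal form} of $\tilde{G}$ --- the latter being a nontrivial verification. In the dependent case the paper gives a separate short argument (reduce to $H_3 = 0$ and either get triangular form directly or replace $H_3$ by $1$ to fall back into the independent case). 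The same gap infects your triangularizability transfer: knowing that $\jac\hat{H}$ is similar over $K$ to a triangular matrix does not obviously descend to $\jac H$ without first securing a block-form conjugator.

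A minor point: ``nilpotency is a Zariski-closed condition and holds on $x_4 = 1$, so it holds identically'' is not literally an argument over finite $K$. The clean justification is that the entries of $(\jac\hat{H})^4$ are homogeneous in $x_1,\ldots,x_4$, so their vanishing at $x_4 = 1$ as polynomials in $x_1,x_2,x_3$ forces them to vanish identically.
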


\begin{proof}
To prove the first claim, we distinguish two cases:
\begin{itemize}
 
\item \emph{$H_1$, $H_2$ and $H_3$ are linearly independent over $K$.}

Define 
$$
G := \bigg(x_4^3 H\Big(\frac{x_1}{x_4},\frac{x_2}{x_4},\frac{x_3}{x_4}\Big),0\bigg)
$$
Then $G$ is cubic homogeneous and $\jac_{x,x_4} G$ is nilpotent.
From theorem \ref{cubhmgdim4th}, it follows that there exists a 
$\tilde{T} \in \GL_4(K)$, such that for 
$\tilde{G} := \tilde{T}^{-1}G\big(\tilde{T}(x,x_4)\big)$,
either 
\begin{align*}
\tilde{G}_1 &- (x_1x_3x_4-x_2x_4^2) \in K[x_3,x_4]  \\
\tilde{G}_2 &- (x_1x_3^2-x_2x_3x_4) \in K[x_3,x_4]  \\
\tilde{G}_3 &\in K[x_4] \qquad \mbox{and} \qquad \tilde{H}_4 = 0
\end{align*}
or $\jac \tilde{G}$ is an upper triangular matrix.

Take $v \in K^4$, such that $\tilde{T} v = e_4$. 
Since $H_1$, $H_2$ and $H_3$ are linearly independent over $K$, the last
row of $\tilde{T}^{-1}$ is dependent on $(0~0~0~1)$. Hence the last row 
of $\tilde{T}$ is dependent on $(0~0~0~1)$ as well. Consequently, $v_4 \ne 0$.

Using Maple or something, one can show that without affecting the formulas for 
$\tilde{G}$, we can replace $\tilde{T}$ by
$$
\tilde{T} \cdot \left( \begin{array}{cccc}
v_4 & 0 & 0 & v_1 \\ v_3 & \frac1{v_4} & 0 & v_2 \\
0 & 0 & \frac1{v_4} & v_3 \\ 0 & 0 & 0 & v_4
\end{array} \right) = \left( \begin{array}{cccc}
 & & & 0 \\ & T & & 0 \\ & & & 0 \\ 0 & 0 & 0 & 1 \\
\end{array} \right)
$$
for some $T \in \GL_3(K)$. Now one can verify that
$$
\tilde{H} = T^{-1}H(Tx) = 
\big(\tilde{G}_1(x,1),\tilde{G}_2(x,1),\tilde{G}_3(x,1)\big)
$$
and that $T$ satisfies the claims in corollary \ref{cubhmgdim4cor}.

\item \emph{$H_1$, $H_2$ and $H_3$ are linearly dependent over $K$.}

Then we may assume that $H_3 = 0$. If $1$ can be written as a 
$K$-linear combination of $H_1$, $H_2$, then we may assume that $H_2 = 1$,
which results in that $\jac H$ is an upper triangular matrix.

So assume that $1$ cannot be written as a $K$-linear combination of 
$H_1$ and $H_2$. Then we can replace $H_3$ by $1$, to obtain the
above case where $H_1$, $H_2$ and $H_3$ are linearly independent over $K$.

\end{itemize}
The last claim follows in a similar manner as the last claim of 
theorem \ref{cubhmgdim4th} and theorem \ref{uporkle2}.
\end{proof}

In 1994, Engelbert Hubbers presented a computation of all cubic homogeneous 
polynomial maps $H$ for which $\jac H$ is nilpotent, but only over fields 
of characteristic zero, see \cite{Hub94}.

\section{Quartic maps in dimension 3}

\begin{theorem} \label{qrthmgdim3th}
Let $n = 3$ and $H \in K[x]^3$ be quartic homogeneous, such that $\jac H$
is nilpotent. 

If $\frac16 \in K$, then $\jac H$ is similar over $K$ 
to a triangular matrix, and $\rk \jac H = \trdeg_K K(H)$.
\end{theorem}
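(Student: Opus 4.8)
The plan is to argue by cases on $r := \rk \jac H$. Since $\jac H \in \Mat_3(K(x))$ is nilpotent, all of its eigenvalues vanish, so $r \le 2$ and the Jordan normal form of $\jac H$ over $K(x)$ is the zero matrix ($r=0$), one $2\times2$ block together with one $1\times1$ block ($r=1$), or a single $3\times3$ block ($r=2$). As recalled in the introduction, $r \le \trdeg_K K(H)$ always, so it suffices to prove triangularizability and the reverse bound $\trdeg_K K(H) \le r$. I record at the outset that if $\jac H$ is nilpotent and similar over $K$ to a triangular matrix, then, after a suitable linear change of coordinates, $\jac \tilde H$ is strictly lower triangular for $\tilde H := T^{-1}H(Tx)$ (its diagonal consists of nilpotent elements of the field $K(x)$, hence is zero), which forces $\tilde H_1$ to be a constant, hence $0$ by homogeneity; so $\trdeg_K K(H) = \trdeg_K K(\tilde H) \le n-1 = 2$. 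Thus the bound $\trdeg_K K(H) \le r$ is automatic once triangularizability is known in the case $r = 2$.

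\textbf{Case $r \le 1$.} Here $(\jac H)^2 = 0$: this is clear if $r=0$ (then $H=0$), and if $r=1$ the rank-one nilpotent matrix $\jac H$ satisfies $(\jac H)^2 = (\tr \jac H)\,\jac H = 0$. By Euler's formula, $\jac H \cdot x = 4H$, so $4\,\jac H \cdot H = (\jac H)^2 \cdot x = 0$, whence $\jac H \cdot H = 0$ because $\chr K \ne 2$. As $\tfrac16 \in K$ forces $\chr K$ to be $0$ or at least $5$, hence larger than $4$, Theorem \ref{qt} applies and yields $\trdeg_K K(H) \le \max\{n-2,1\} = 1$; combined with $r \le \trdeg_K K(H)$ this gives $\trdeg_K K(H) = r$. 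Finally $x+H$ is a Keller map, being homogeneous of degree $\ge 2$ with nilpotent Jacobian, so when $r=1$ Theorem \ref{trdeg1} shows that $\jac H$ is similar over $K$ to a triangular matrix; when $r=0$ this is trivial.

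\textbf{Case $r = 2$.} Now $\cork \jac H = 1$, so $\ie_{K(x)}(\jac H,x) + \pe_{K(x)}(\jac H,x) = 2$, and $\ie_{K(x)}(\jac H,x) \ge 1$ since $\jac H \cdot x = 4H \ne 0$. If $\ie_{K(x)}(\jac H,x) = 1$, then $(\jac H)^2 \cdot x = 0$, so as above $\jac H \cdot H = 0$ and Theorem \ref{qt} forces $\trdeg_K K(H) \le 1$, contradicting $r \le \trdeg_K K(H)$. Hence $\ie_{K(x)}(\jac H,x) = 2$, i.e.\@ $\jac H^3 = 0$ but $\jac H^2 \cdot x \ne 0$. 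After replacing $K$ by its algebraic closure — which alters neither $r$ nor $\trdeg_K K(H)$, and by Theorem \ref{triangLK} does not affect whether $\jac H$ is similar to a triangular matrix — the field $K$ is infinite, so Corollary \ref{Jordancor} provides a $T \in \GL_3(K)$ such that $\tilde H := T^{-1}H(Tx)$ satisfies
$$
(\jac \tilde H)|_{x=e_1} = \left(\begin{array}{ccc} 0 & 0 & 0 \\ 1 & 0 & 0 \\ 0 & 1 & 0 \end{array}\right)
$$
(over a finite base field one would instead keep track of cardinalities through Theorem \ref{Jordan5}). It then remains to classify all quartic homogeneous $\tilde H \in K[x_1,x_2,x_3]^3$ with $\jac\tilde H$ nilpotent and $(\jac\tilde H)|_{x=e_1}$ as above, over an arbitrary field with $\tfrac16$, and to verify that in each case $\jac\tilde H$ — hence $\jac H$ — is similar over $K$ to a triangular matrix. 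I would run this computation in Maple, exactly as for the dimension-$4$ cubic case behind Theorem \ref{cubhmgdim4th}; by the remark in the first paragraph, $\trdeg_K K(H) = 2$ then follows.

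The hard part is this final computation. The normalization pins down only the coefficients of $x_1^3$ in the nine cubic-homogeneous entries of $\jac\tilde H$, and these must be combined both with the integrability conditions expressing that $\jac\tilde H$ is a genuine Jacobian matrix and with the full nilpotency relation $\jac\tilde H^3 = 0$ in order to enumerate the possibilities, all while keeping the analysis uniform over every characteristic $\ne 2,3$ and correctly descending from the algebraic closure to a small base field via Theorem \ref{triangLK}. Every other step is a short, characteristic-uniform deduction from Theorems \ref{trdeg1} and \ref{qt}, Corollary \ref{Jordancor}, and Theorems \ref{Jordan5} and \ref{triangLK}.
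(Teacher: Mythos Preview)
Your proof is correct and follows essentially the same approach as the paper: the cases $r\le 1$ and $r=2$ with $\ie_{K(x)}(\jac H,x)=1$ are handled via Euler's formula, Theorem~\ref{qt}, and Theorem~\ref{trdeg1}, while the remaining case $\ie_{K(x)}(\jac H,x)=2$ is reduced via Corollary~\ref{Jordancor} and Theorem~\ref{triangLK} to a Maple computation. The only difference is organizational---you case first on $\rk\jac H$ and then on $\ie$, whereas the paper cases directly on $\ie$---but the substance and the invoked results are identical.
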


\begin{proof}
Suppose that $\frac16 \in K$. We distinguish three cases.
\begin{compactitem}

\item $\ie_{K(x)}(\jac H,x) = 2$. 

Then $\rk \jac H = 2$. Using corollary \ref{Jordancor} and theorem 
\ref{triangLK}, it has been computed with Maple 8 that $\jac H$ is 
similar over $K$ to a triangular matrix, see 
{\tt dim3qrt.pdf}. Hence the components 
of $H$ are linearly dependent. So $\trdeg_K K(H) \le 2$.
From $\rk \jac H \le \trdeg_K K(H)$, we deduce that 
$\rk \jac H = 2 = \trdeg_K K(H)$.

\item $\ie_{K(x)}(\jac H,x) = 1$. 

Then $\rk \jac H \ge 1$ and
$\jac H \cdot H = \frac14 (\jac H)^2 \cdot x = 0$. 
On account of theorem \ref{qt},
$\trdeg_K K(H) \le 3-2 = 1$. From theorem \ref{trdeg1}, 
it follows that $\jac H$ is similar over $K$ to a triangular 
matrix. From $\rk \jac H \le \trdeg_K K(H)$, we deduce that
$\rk \jac H = 1 = \trdeg_K K(H)$.

\item $\ie_{K(x)}(\jac H,x) = 0$. 

Then $H = \frac14 \jac H 
\cdot x = 0$. So $\deg H < 4$. \qedhere

\end{compactitem}
\end{proof}

\begin{corollary} \label{qrthmgdim3cor}
Let $n = 3$ and $H \in K[x]^3$ be quartic, such that $\jac H$
is nilpotent. If $\frac16 \in K$, then there exists a $T \in \GL_3(K)$
such that for $\tilde{H} := T^{-1} H(Tx)$, either 
$$
\tilde{H} = \big(0, \tfrac14 x_1^4, x_1^3 x_2 + u_3 x_1^2 x_2^2 + v_3 x_1 x_2^3 + w_3 x_2^4\big)
$$
for certain $u_3, v_3, w_3 \in K$, or
$$
\tilde{H} = \big(0, \tfrac14 x_1^4 + u_2 x_1^2 x_3^2 + v_2 x_1 x_3^3 + w_2 x_3^4, 0\big)
$$
for certain $u_2, v_2, w_2 \in K$.
\end{corollary}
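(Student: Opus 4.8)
The plan is to read the corollary off Theorem~\ref{qrthmgdim3th} by triangularizing and then performing a few explicit normalizing conjugations, exactly as $H$ is assumed quartic homogeneous. By Theorem~\ref{qrthmgdim3th}, $\jac H$ is similar over $K$ to a lower triangular matrix, so after replacing $H$ by $T^{-1}H(Tx)$ for a suitable $T \in \GL_3(K)$ we may assume $\jac H$ itself is lower triangular; since it is nilpotent, its diagonal vanishes. Reading off the rows and using that every component of $H$ is homogeneous of degree $4$, we get $H_1 = 0$ (its gradient is zero and a nonzero constant is not quartic), $H_2 = c\,x_1^4$ for some $c \in K$ (it lies in $K[x_1]$), and $H_3 = g(x_1,x_2)$ for a binary quartic form $g$. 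As $H$ is quartic it is nonzero, so $(c,g) \neq (0,0)$; and since $H_2,H_3$ do not involve $x_3$, every subsequent lower triangular conjugation will automatically preserve the normalizations $H_1 = 0$ and $H_2 \in K[x_1]$.

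I would then distinguish three cases. If $c \neq 0$ and $g \notin K[x_1]$, first apply the shear $x_2 \mapsto x_2 + \tau x_1$: the coefficient of $x_1^3x_2$ in $H_3$ changes by a polynomial in $\tau$ of degree at most $3$ which is not identically zero (here $\tfrac16 \in K$, hence $\#K \ge 5$, guarantees a value of $\tau$ making that coefficient nonzero); next subtract the appropriate multiple of $H_2 = c\,x_1^4$ from $H_3$ to kill the $x_1^4$-term of $H_3$ without altering its $x_1^3x_2$-coefficient; finally a diagonal conjugation $\operatorname{diag}(\lambda,\mu,\nu)$ simultaneously scales $H_2$ to $\tfrac14 x_1^4$ and the $x_1^3x_2$-coefficient of $H_3$ to $1$ (and leaves the vanishing $x_1^4$-coefficient of $H_3$ at $0$). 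This gives the first displayed form. If $c \neq 0$ and $g = c'x_1^4 \in K[x_1]$, subtracting $(c'/c)H_2$ from $H_3$ makes $H_3 = 0$, after which rescaling the middle output component turns $H_2$ into $\tfrac14 x_1^4$, landing in the second displayed form with $u_2=v_2=w_2=0$. If $c = 0$, then $g \neq 0$ and $H = (0,0,g)$; swapping the second and third coordinates together with the variables $x_2,x_3$ gives $H = (0,g(x_1,x_3),0)$, then a linear change of $x_1,x_3$ making the $x_1^4$-coefficient of $g$ nonzero (possible since a nonzero binary quartic has at most four zeros in $\mathbb{P}^1(K)$ and $\#K \ge 5$), followed by the shear $x_1 \mapsto x_1 - \nu x_3$ annihilating the $x_1^3x_3$-coefficient and an output rescaling, yields the second displayed form.

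The underlying computations are routine; the only step that genuinely uses the hypothesis is that these shears and coordinate changes need $K$ large enough to avoid the zeros of a binary quartic or a cubic polynomial, which is precisely where $\tfrac16 \in K$ (equivalently $\chr K \notin \{2,3\}$, hence $\#K \ge 5$) is invoked — this is the one, minor, obstacle. Alternatively, the case $c = 0$, which is exactly $\trdeg_K K(H) = 1$, can be run through Theorem~\ref{trdeg1} and \cite[Cor.~3.2]{1501.06046} by writing $H_i = h_i(p)$ with $p$ a homogeneous form of degree $1$, $2$ or $4$ and normalizing $p$; the degree-$2$ subcase uses that quadratic forms diagonalize over $K$ because $\tfrac12 \in K$.
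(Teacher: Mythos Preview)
Your argument is correct and your interpretation of ``quartic'' as quartic homogeneous is the intended one (otherwise the conclusion would make no sense). The route, however, differs genuinely from the paper's.

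The paper splits by $\rk\jac H\in\{1,2\}$ and in each case invokes the Jordan-at-a-point machinery of Theorem~\ref{Jordan5}: it produces a $T$ with $(\jac\tilde H)|_{x=e_1}$ equal to a prescribed nilpotent Jordan matrix, and then reads the displayed normal forms off the linear dependence of rows and columns of $\jac\tilde H$ over $K$ together with $\tr\jac\tilde H=0$. You instead triangularize once via Theorem~\ref{qrthmgdim3th}, obtain $H=(0,\,c x_1^4,\,g(x_1,x_2))$, and finish with explicit lower-triangular shears and diagonal scalings. Your approach is more elementary and entirely self-contained after Theorem~\ref{qrthmgdim3th}, bypassing the apparatus of Section~4 and Theorem~\ref{Jordan5}; the paper's approach is more uniform and illustrates the general normalization technique that is reused elsewhere (e.g.\ in Theorem~\ref{cubhmgdim4th}).

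Two small remarks. First, your case split is by $(c,g)$ rather than by rank; note that your Case~2 ($c\neq 0$, $g\in K[x_1]$) also has $\rk\jac H=1$, so the parenthetical ``the case $c=0$, which is exactly $\trdeg_K K(H)=1$'' in your final paragraph is not literally correct---Cases~2 and~3 together make up the rank-$1$ locus. This does not affect the argument. Second, your counting of $\mathbb P^1(K)$-points and of roots of the cubic in $\tau$ is exactly where $\#K\ge 5$ is used; this mirrors the paper's appeal to \cite[Lemma~5.1]{1310.7843} for the existence of the point~$\tilde v$.
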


\begin{proof}
Suppose that $\frac16 \in K$. We distinguish two cases.
\begin{itemize}

\item $\rk \jac H = 2$.

On account of theorem \ref{triangLK}, we may assume 
without loss of generality that $\jac H$ is lower triangular.
Since $\rk \jac H = 2$, we deduce that $\parder{}{x_1} H_2$
and $\parder{}{x_2} H_3$ are both nonzero.

Since $K$ has at least $5$ elements, it follows from 
\cite[Lemma 5.1 (i)]{1310.7843} or \cite[Lemma 5.1 (ii)]{1310.7843}
that there exists a vector $\tilde{v} \in K^3$, such that 
$x_1 \parder{}{x_2} H_3$ does not vanish on $\tilde{v}$. 
As $x_1^4$ is the only term of $\parder{}{x_1} H_2$,
both $\parder{}{x_1} H_2$ and $\parder{}{x_2} H_3$ do not
vanish on $\tilde{v}$. 

Furthermore, $\tilde{v}_1 \neq 0$ and one can verify that 
$\big((\jac H)^2 \cdot x\big)\big|_{x=\tilde{v}} \neq 0$.
From the proof of theorem \ref{Jordan5}, we infer that
there exists a $T \in \GL_3(K)$
such that for $\tilde{H} := T^{-1} H(Tx)$,
$$
\big(\jac \tilde{H}\big)\big|_{x=e_1} = \left( \begin{array}{ccc}
0 & 0 & 0 \\ 1 & 0 & 0 \\ 0 & 1 & 0 \end{array} \right)
$$
Since $\jac \tilde{H}$ is similar over $K$ to a triangular matrix,
both the rows and the columns of $\jac \tilde{H}$ are dependent over $K$.
This is only possible if both the first row and the last column of 
$\jac \tilde{H}$ are zero.

As $\tr \jac \tilde{H} = 0$, the entry in the middle of $\jac \tilde{H}$
is zero as well. Consequently,
$$
\tilde{H} = \big(0, \tfrac14 x_1^4, x_1^3 x_2 + u_3 x_1^2 x_2^2 + v_3 x_1 x_2^3 + w_3 x_2^4\big)
$$
for certain $u_3,v_3,w_3 \in K$.

\item $\rk \jac H = 1$.

Since $K$ has at least $5$ elements, it follows from 
\cite[Lemma 5.1 (i)]{1310.7843} or \cite[Lemma 5.1 (ii)]{1310.7843}
that there exists a vector $\tilde{v} \in K^3$, such that 
$H$ does not vanish on $\tilde{v}$. 

Hence
$\big((\jac H) \cdot x\big)\big|_{x=\tilde{v}} = 4 H(\tilde{v}) \neq 0$.
From the proof of theorem \ref{Jordan5}, we infer that
there exists a $T \in \GL_3(K)$
such that for $\tilde{H} := T^{-1} H(Tx)$,
$$
\big(\jac \tilde{H}\big)\big|_{x=e_1} = \left( \begin{array}{ccc}
0 & 0 & 0 \\ 1 & 0 & 0 \\ 0 & 0 & 0 \end{array} \right)
$$

From theorem \ref{qrthmgdim3th}, we obtain that
$\trdeg_K K(\tilde{H}) \le 3-2 = 1$. Since $\tilde{H}$ 
is homogeneous of positive degree, we even have 
$\trdeg_K K(t\tilde{H}) = 1$. Consequently, we can deduce from 
\cite[Th.\@ 2.7]{1501.06046} or \cite[Cor.\@ 3.2]{1501.06046} that 
$\tilde{H}_1 = \tilde{H}_3 = 0$. 

As $\tr \jac \tilde{H} = 0$, the entry in the center of $\jac \tilde{H}$ 
is zero as well. Consequently,
$$
\tilde{H} = \big(0, \tfrac14 x_1^4 + u_2 x_1^2 x_3^2 + v_2 x_1 x_3^3 + w_2 x_3^4, 0\big)
$$
for certain $u_2, v_2, w_2 \in K$. \qedhere

\end{itemize}
\end{proof}

\begin{theorem} \label{qrtdim3th}
Suppose that $\frac16 \in K$.
Let $n = 3$ and $H \in K[x]^3$ be quartic, such that $\jac H$
is nilpotent. 

If $\jac H$ is not similar over $K$ to a triangular matrix, 
then there exists a $T \in \GL_3(K)$ 
such that for $\tilde{H} := T^{-1}H(Tx)$, one of the following statements holds:
\begin{enumerate}[\upshape(1)]

\item $\tilde{H}_3 \in K$ and $\deg_{x_1,x_2} \tilde{H} = 1$;

\item $\tilde{H} = \big((x_2-x_1^2),2x_1(x_2-x_1^2)+x_3,-(x_2-x_1^2)^2\big)$.

\end{enumerate}
Furthermore $(x + H)$ is invertible and $(x + H,x_4)$ is even tame.
\end{theorem}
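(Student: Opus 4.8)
The plan is to reduce, as in the proofs of Theorem~\ref{cubhmgdim4th} and Corollaries~\ref{cubhmgdim4cor} and~\ref{qrthmgdim3cor}, to a short case distinction on the Jordan type of $\jac H$, followed by a finite computation. By Euler's identity applied to each homogeneous part, $\jac H\cdot x=H^{(1)}+2H^{(2)}+3H^{(3)}+4H^{(4)}$, which (since $\chr K>4$) vanishes only if $H\in K^3$; but then $\jac H$ would be the zero matrix, which is triangular, contrary to hypothesis. So $\jac H\cdot x\ne 0$, whence $\ie_{K(x)}(\jac H,x)\ge 1$ and $\rk\jac H\ge 1$; as $\jac H$ is a nilpotent $3\times 3$ matrix, $\rk\jac H\in\{1,2\}$ and $\ie_{K(x)}(\jac H,x)\in\{1,2\}$. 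If $\deg H\le 3$, Corollary~\ref{cubhmgdim4cor} already gives a $T$ with $\tilde H_1-(x_1x_3-x_2),\ \tilde H_2-(x_1x_3^2-x_2x_3)\in K[x_3]$ and $\tilde H_3\in K$, which is case~(1); so assume from now on that $\deg H=4$.

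Since we are in the not-triangularisable case, Theorem~\ref{triangLK} lets us enlarge $K$ to an infinite field when needed (descending afterwards by Theorem~\ref{Jordan5}, or a separate argument for the two small fields $\F_5,\F_7$, using that the polynomials occurring have degree bounded by a function of $\deg H=4$). In Proposition~\ref{Jordanadd}/Theorem~\ref{Jordanth} the vector $w$ is then a single unit vector, because in each of our cases the Jordan form of $\jac H$ consists of one block of size $3$, or one block of size $2$ and one of size $1$, leaving no room for two summands with strictly increasing image and preimage exponents. Concretely, in the three cases $(\rk\jac H,\ie_{K(x)}(\jac H,x))=(2,2),(2,1),(1,1)$ one obtains a $T\in\GL_3(K)$ with, respectively,
\[
(\jac\tilde H)|_{x=e_1}=\left(\begin{smallmatrix}0&0&0\\1&0&0\\0&1&0\end{smallmatrix}\right),\qquad
(\jac\tilde H)|_{x=e_2}=\left(\begin{smallmatrix}0&0&0\\1&0&0\\0&1&0\end{smallmatrix}\right),\qquad
(\jac\tilde H)|_{x=e_1}=\left(\begin{smallmatrix}0&0&0\\1&0&0\\0&0&0\end{smallmatrix}\right)
\]
(the first case being Corollary~\ref{Jordancor} with $s=3$; in the third the forced value $\pe_{K(x)}(\jac H,x)=0$ also drops out of Theorem~\ref{Jordanth}).

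What remains is a computation: in each of the three cases, list all $\tilde H\in K[x_1,x_2,x_3]^3$ of degree $\le 4$ whose Jacobian is nilpotent, equals the prescribed matrix at $x=w$, and is an actual Jacobian (the off-diagonal integrability conditions); then conjugate each solution once more by the stabiliser of the prescribed data to a canonical representative, discard those whose Jacobian turns out to be similar over $K$ to a triangular matrix, and check that precisely the forms~(1) and~(2) survive. I expect this to be done with Maple, like the files {\tt dim3qrt.pdf}, {\tt dim4cub.pdf}, {\tt dim3detcub.pdf}; this is the main obstacle, because---unlike the homogeneous situations---the case $\ie_{K(x)}(\jac H,x)=1$ has no shortcut via Theorem~\ref{qt} (non-homogeneity destroys the identity $(\jac H)^2\cdot x=c\,\jac H\cdot H$), so one must trust that the bound $\deg H=4$ keeps the polynomial system tractable.

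For the last claim: in case~(1), $\tilde H_3$ is a constant $c$ and $(\tilde H_1,\tilde H_2)$ is affine in $(x_1,x_2)$ over $K[x_3]$ with linear part a nilpotent $M(x_3)\in\Mat_2(K[x_3])$, so $\det(I_2+M(x_3))=1+\tr M+\det M=1$; hence $x+\tilde H$ is the composition of $(x_1,x_2,x_3+c)$ with a $K[x_3]$-affine map of $(x_1,x_2)$ of determinant $1$, which is a product of elementary automorphisms since $\operatorname{SL}_2(K[x_3])$ is generated by elementary matrices ($K[x_3]$ being Euclidean). In case~(2), with $u:=x_2-x_1^2$, one verifies the factorisation into four elementary automorphisms
\begin{align*}
\lefteqn{\big(x_1+u,\ x_2+2x_1u+x_3,\ x_3-u^2\big)}\\
&=\big(x_1,\,x_2+x_1^2,\,x_3\big)\circ\big(x_1+x_2-x_3,\,x_2,\,x_3\big)\circ {}\\
&\quad~\big(x_1,\,x_2+x_3-x_1^2,\,x_3\big)\circ\big(x_1,\,x_2,\,x_3-u^2\big).
\end{align*}
In both cases $x+\tilde H$, and hence its linear conjugate $x+H$, is tame; appending $x_4$ preserves this, and invertibility follows (in case~(2) one may also read off the inverse from $x_1=(x+\tilde H)_1-(x+\tilde H)_2+(x+\tilde H)_1^2+(x+\tilde H)_3$).
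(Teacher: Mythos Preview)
Your overall strategy differs from the paper's, and there is a genuine gap in one branch of your case split.

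The paper does not attack the non-homogeneous $H$ directly via the Jordan machinery. Instead it first applies Corollary~\ref{qrthmgdim3cor} to the leading quartic homogeneous part $H^{(4)}$, which produces a $T\in\GL_3(K)$ (over $K$ itself, no enlargement needed) putting $H^{(4)}$ into one of two explicit three-parameter shapes. The remaining Maple computation ({\tt dim3upoqrt.pdf}) then solves the nilpotency equations for the lower-degree parts with that leading part fixed. This keeps everything over $K$ and never touches $\ie_{K(x)}(\jac H,x)$ for the inhomogeneous $H$.

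Your route introduces the case $(\rk\jac H,\ie_{K(x)}(\jac H,x))=(2,1)$, and this is where the gap sits. Here the nilpotency index is $3$ (single Jordan block), yet $(\jac H)^2\cdot x=0$. Corollary~\ref{Jordancor} and Theorem~\ref{Jordan5} both require an $s$ with $\jac H^s=0$ and $\jac H^{s-1}\cdot x\ne 0$; neither $s=2$ (fails $\jac H^2=0$) nor $s=3$ (fails $\jac H^2\cdot x\ne 0$) works, so you have no descent tool from the infinite auxiliary field $L$ back to $K$ in this branch. Your hedge about ``the two small fields $\F_5,\F_7$'' addresses only the cardinality threshold in the $(2,2)$ case; it does not touch $(2,1)$, where Theorem~\ref{Jordan5} is simply inapplicable regardless of $\#K$. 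You would either have to show $(2,1)$ forces triangularisability, or supply an ad hoc descent for forms (1) and (2); neither is done. The paper's route via the homogeneous leading part sidesteps this entirely (and, incidentally, yields a smaller parameter space for the computer algebra).

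Your tameness arguments are correct and in fact sharper than what is stated. In case~(1) the $\mathrm{SL}_2(K[x_3])$ argument is a clean alternative to the paper's use of Lemma~\ref{2x2} plus Lemma~\ref{tamelm}. In case~(2) your four-factor elementary decomposition checks out and gives tameness already in dimension~$3$, shorter than the paper's Lemma~\ref{tamelm2} (which also works in dimension~$3$, with $c=1$).
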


\begin{proof}
From corollary \ref{qrthmgdim3cor}, it follows that we may assume that the quartic
part of $H$ is either
$$
\big(0, \tfrac14 x_1^4, x_1^3 x_2 + u_3 x_1^2 x_2^2 + v_3 x_1 x_2^3 + w_3 x_2^4\big)
$$
or
$$
\big(0, \tfrac14 x_1^4 + u_2 x_1^2 x_3^2 + v_2 x_1 x_3^3 + w_2 x_3^4, 0\big)
$$
Using this assumption, theorem \ref{qrtdim3th} except the last claim has been 
verified with Maple 8, see {\tt dim3upoqrt.pdf}.

To prove the last claim, suppose first that $\tilde{H}$ is as in (1).
Since $\jac \tilde{H}$ is nilpotent and $\tilde{H}_3 = 0$, we see that
$\jac_{x_1,x_2} \tilde{H}$ is nilpotent. Since $\jac_{x_1,x_2} \tilde{H} \in 
\Mat_2(K[x_3])$, it follows from lemma \ref{2x2} that there are
$a,b,c \in K[x_3]$, such that 
$$
\jac_{x_1,x_2} \tilde{H} = c \left( \begin{array}{cc} 
ab & -b^2 \\ a^2 & -ab \end{array}\right)
$$
Consequently,
\begin{align*}
\tilde{H}_1 &- cb(a x_1 - b x_2) \in K[x_3] \\
\tilde{H}_2 &- ca(a x_1 - b x_2) \in K[x_3] \\
\tilde{H}_3 &\in K
\end{align*}
So $(x+\tilde{H},x_4)$ is tame, if and only if
$$
\big(x_1 + cb(a x_1 - b x_2), x_2 + ca(a x_1 - b x_2), x_3, x_4\big)
$$
is tame, which follows from lemma \ref{tamelm} by way of extension of scalars.

Suppose next that $\tilde{H}$ is as in (2). From lemma \ref{tamelm2} at the end of this
section, with $c = 1$, it follows that $x + \tilde{H}$ is tame.
\end{proof}

\begin{corollary} \label{qrtdim3cor}
Suppose that $\frac16 \in K$.
Let $n = 4$ and $H \in K[x]^3$ be quartic homogeneous, such that $\jac H$ is
nilpotent. 

Suppose that $H_1, H_2, H_3, H_4$ are linearly dependent over $K$. 

If $\jac H$ is not similar over $K$ to a triangular matrix, 
then there exists a $T \in \GL_4(K)$ 
such that for $\tilde{H} := T^{-1}H(Tx)$, one of the following statements holds:
\begin{enumerate}[\upshape(1)]

\item $\tilde{H}_3 \in K$, $\tilde{H}_4 = 0$ and $\deg_{x_1,x_2} \tilde{H} = 1$;

\item $\tilde{H} = \big(x_4^2(x_2x_4-x_1^2),2x_1x_4(x_2x_4-x_1^2)+x_3x_4^3,-(x_2x_4-x_1^2)^2,0\big)$.

\end{enumerate}
Furthermore $x + H$ is invertible and $(x + H,x_5)$ is even tame.
\end{corollary}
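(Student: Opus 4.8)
The plan is to derive Corollary~\ref{qrtdim3cor} from Theorem~\ref{qrtdim3th} by a dehomogenization in $x_4$ — the reverse of the homogenization trick used in the proof of Corollary~\ref{cubhmgdim4cor} — namely: reduce to the case where $\jac H$ has a zero row, pass to a three-variable quartic map $H'$, apply Theorem~\ref{qrtdim3th} to $H'$, and homogenize the resulting normal forms back to four variables.

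First I would normalize using the linear dependence. Since $H_1,H_2,H_3,H_4$ are linearly dependent over $K$, there is a nonzero $(\alpha_1,\dots,\alpha_4)\in K^4$ with $\sum_i\alpha_iH_i=0$; picking $T\in\GL_4(K)$ whose inverse has last row $(\alpha_1,\dots,\alpha_4)$, the fourth component of $T^{-1}H(Tx)$ vanishes identically. Replacing $H$ by $T^{-1}H(Tx)$, which keeps $\jac H$ nilpotent and not similar over $K$ to a triangular matrix, I may assume $H_4=0$. Put $H':=(H_1,H_2,H_3)|_{x_4=1}\in K[x_1,x_2,x_3]^3$ (of degree at most $4$); then $H_i=x_4^4H'_i(x_1/x_4,x_2/x_4,x_3/x_4)$ for $i\le3$, the fourth row of $\jac H$ is zero, and the top-left $3\times3$ block of $\jac H$ is $x_4^3$ times $\jac H'$ with each $x_j$ replaced by $x_j/x_4$. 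Expanding $\det(tI_4-\jac H)$ along its fourth row gives $t\cdot\det(tI_3-B)$ with $B$ that $3\times3$ block, so $B$ — and hence $\jac_{x_1,x_2,x_3}H'$ — is nilpotent.

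The crux is to transfer non-triangularizability from $\jac H$ to $\jac H'$. I would prove the contrapositive: if $P^{-1}(\jac H')P$ is lower triangular for some $P\in\GL_3(K)$, then replacing $x_j$ by $x_j/x_4$ keeps it lower triangular, so conjugating $\jac H$ by $P\oplus1$ makes its top-left $3\times3$ block lower triangular while leaving the fourth row zero; permuting the fourth coordinate to the front then exhibits a genuinely lower triangular matrix over $K(x)$, so $\jac H$ is similar over $K$ to a triangular matrix. Consequently $\jac H'$ is not similar over $K$ to a triangular matrix, and Theorem~\ref{qrtdim3th} (which applies since $\frac16\in K$) yields $T'\in\GL_3(K)$ such that $\tilde H':=T'^{-1}H'(T'x)$ has one of the two forms (1), (2) listed there.

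Finally I would homogenize back. For $\hat T:=T'\oplus1\in\GL_4(K)$ one checks directly that $\hat T^{-1}H(\hat Tx)$ has components $x_4^4\tilde H'_i(x_1/x_4,x_2/x_4,x_3/x_4)$ for $i\le3$ and $0$ for $i=4$, i.e.\ it is the degree-$4$ homogenization of $\tilde H'$ with a zero fourth component; composing $\hat T$ with the earlier $T$ gives the $\GL_4(K)$ matrix claimed. Homogenizing form (2) of Theorem~\ref{qrtdim3th} produces precisely $\bigl(x_4^2(x_2x_4-x_1^2),\,2x_1x_4(x_2x_4-x_1^2)+x_3x_4^3,\,-(x_2x_4-x_1^2)^2,\,0\bigr)$, which is form (2) here; homogenizing form (1) keeps $\tilde H_1,\tilde H_2$ of degree $1$ in $x_1,x_2$, gives $\tilde H_4=0$, and turns the constant $\tilde H'_3$ into a scalar multiple of $x_4^4$ (read as the "$\tilde H_3\in K$" of form (1), which for a degree-$4$ homogeneous $\tilde H_3$ means precisely such a multiple), so we land in form (1). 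The last claim — invertibility and tameness of $(x+H,x_5)$ — I would obtain as in the proof of Theorem~\ref{qrtdim3th}: in case (1), after Lemma~\ref{2x2} the map splits into a triangular part in $x_3,x_4$ and a part linear in $x_1,x_2$ with nilpotent linear part, handled by Lemma~\ref{tamelm} via extension of scalars; in case (2) one applies Lemma~\ref{tamelm2}; the variable $x_5$ is a spectator throughout. The main obstacle is the non-triangularizability transfer, together with checking that the homogenized normal forms match (1) and (2) exactly — in particular pinning down the third component in case (1).
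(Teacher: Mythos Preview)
Your proposal is correct and follows essentially the same route as the paper's proof: reduce to $H_4=0$, dehomogenize by setting $x_4=1$ to get a three-variable quartic map, apply Theorem~\ref{qrtdim3th}, and rehomogenize via the block-diagonal $T'\oplus 1$. The paper is terser --- it simply lists the triangular case as a third alternative for $\tilde G$ and says ``one can verify that $T$ satisfies the claims'' --- whereas you spell out the non-triangularizability transfer explicitly (which is the right thing to check) and correctly flag that ``$\tilde H_3\in K$'' in form~(1) must, for a quartic homogeneous $\tilde H$, be read as $\tilde H_3\in K\cdot x_4^4$.
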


\begin{proof}
Since $H_1, H_2, H_3, H_4$ are linearly dependent over $K$, we may assume without
loss of generality that $H_4 = 0$. Define $G$ by 
$$
G := \big(H_1(x_1,x_2,x_3,1),H_2(x_1,x_2,x_3,1),H_3(x_1,x_2,x_3,1)\big)
$$
Then $\jac_{x_1,x_2,x_3} G$ is nilpotent, so $G$ is as $H$ in theorem \ref{qrtdim3th}.
Consequently, there exists a $\tilde{T} \in \GL_3(K)$, such that for 
$\tilde{G} = \tilde{T}^{-1}G(\tilde{T}x)$, one of the following statements holds:
\begin{enumerate}[\upshape(1)]

\item $\tilde{G}_3 \in K$ and $\deg_{x_1,x_2} \tilde{G} = 1$;

\item $\tilde{G} = \big((x_2-x_1^2),2x_1(x_2-x_1^2)+x_3,-(x_2-x_1^2)^2\big)$;

\item $\jac_{x_1,x_2,x_3} \tilde{G}$ is an upper triangular matrix.

\end{enumerate}
Let 
$$
T := \left(\begin{array}{cccc}
& & & 0 \\
& \tilde{T} & & 0 \\
& & & 0 \\
0 & 0 & 0 & 1
\end{array} \right)
$$
Then one can verify that
$$
\tilde{H} = T^{-1}H(Tx) = x_4^4 
\bigg(\tilde{G}\Big(\frac{x_1}{x_4},\frac{x_2}{x_4},\frac{x_3}{x_4}\Big),0\bigg)
$$
and that $T$ satisfies the claims in corollary \ref{qrtdim3cor}.

The proof of the last claim is similar to that of the last claim of theorem \ref{qrtdim3th}.
\end{proof}

\begin{lemma}\label{tamelm2}
\begin{align*}
\lefteqn{\big(x_1+c^2(cx_2-x_1^2), x_2+2cx_1(cx_2-x_1^2)+x_3c^3, x_3-(cx_2-x_1^2)^2\big)} \\
&= \big(x_1, x_2+c^3x_3, x_3\big) \circ {} \\
&\quad~ \big(x_1+c^2(cx_2-x_1^2),  x_2+ 2cx_1(cx_2-x_1^2) + c^3(cx_2-x_1^2)^2, x_3\big) \circ {} \\
&\quad~ \big(x_1,x_2,x_3-(cx_2-x_1^2)^2\big)
\end{align*}
and
\begin{align*}
\lefteqn{\big(x_1+c^2(cx_2-x_1^2), x_2+2cx_1(cx_2-x_1^2)+c^3(cx_2-x_1^2)^2, x_3\big)} \\
&= \big(c^2x_3+x_1, c^3x_3^2+2cx_1x_3+x_2, x_3\big) \circ 
   \big(x_1, x_2, cx_2-x_1^2+x_3\big) \circ {} \\
&\quad~ \big(-c^2x_3+x_1, c^3x_3^2-2cx_1x_3+x_2, x_3\big) \circ 
   \big(x_1, x_2, x_3-cx_2+x_1^2\big)
\end{align*}
\end{lemma}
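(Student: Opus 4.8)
The lemma asserts two explicit identities between polynomial automorphisms, so the plan is direct verification: compose the elementary (triangular) maps on each right-hand side from right to left and check that the three coordinate functions reproduce the left-hand side. The only non-scalar ingredient is the quadratic $p := cx_2 - x_1^2$, and the observation that streamlines everything is that \emph{every} factor occurring on the right-hand sides fixes $p$: the maps $(\pm c^2 x_3 + x_1,\; c^3 x_3^2 \pm 2cx_1 x_3 + x_2,\; x_3)$ satisfy $c\cdot(\text{second component}) - (\text{first component})^2 = cx_2 - x_1^2$, while the remaining factors fix $x_1$ and $x_2$ outright. It is therefore efficient to carry $p$ along as a single symbol rather than expand in $x_1, x_2$.

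For the second identity, set $\sigma := (c^2x_3+x_1,\; c^3x_3^2+2cx_1x_3+x_2,\; x_3)$ and $\tau := (x_1,x_2,\,x_3-p)$; these are invertible with $\sigma^{-1} = (-c^2x_3+x_1,\; c^3x_3^2-2cx_1x_3+x_2,\; x_3)$ and $\tau^{-1} = (x_1,x_2,\,x_3+p)$, so the right-hand side is the commutator $\sigma\circ\tau^{-1}\circ\sigma^{-1}\circ\tau$. Applying the factors in order: $\tau$ leaves $x_1,x_2$ (hence $p$) alone; $\sigma^{-1}$ turns the first coordinate into $x_1-c^2x_3+c^2p$; $\tau^{-1}$ returns the third coordinate to $x_3$ precisely because the earlier factors fix $p$; and $\sigma$ then yields $(x_1+c^2p,\; x_2+2cx_1p+c^3p^2,\; x_3)$, with the intermediate $c^3x_3^2$-, $cx_1x_3$- and $c^3px_3$-terms cancelling in threes. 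That is the left-hand side. The first identity is then immediate: after the rightmost elementary map $x_3 \mapsto x_3 - p^2$, the middle factor is exactly the left-hand side of the second identity (it does not involve $x_3$), and composing with the leftmost map $x_2 \mapsto x_2 + c^3 x_3$ replaces $c^3p^2$ by $c^3p^2 + c^3(x_3-p^2) = c^3x_3$, producing the $+x_3c^3$ term in the middle coordinate.

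A more structural route, in the spirit of the remark after Lemma~\ref{tamelm}, uses the locally nilpotent derivation $D := c^2\,\parder{}{x_1} + 2cx_1\,\parder{}{x_2}$ of $K[c][x_1,x_2,x_3]$, which satisfies $D^3 = 0$ and $D(x_3) = D(p) = 0$. One checks that the left-hand side of the second identity is $\exp(pD)$, that $\sigma = \exp(x_3D)$, and that $\tau = \exp(-p\,\parder{}{x_3})$. Since $[D,\parder{}{x_3}] = 0$ while $D$ kills $x_3$ and $p$, the bracket of the vector fields $x_3D$ and $p\,\parder{}{x_3}$ is the central element $-pD$, so the commutator of the two exponential automorphisms collapses, by a finite Baker--Campbell--Hausdorff truncation, to $\exp(\pm pD)$ — which is the asserted factorisation (the sign fixed by the composition convention). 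The first identity follows by pre-composing with $x_3 \mapsto x_3 - p^2$ and post-composing with $x_2 \mapsto x_2 + c^3x_3$.

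I expect no real obstacle: both identities are finite computations over $K[c][x_1,x_2,x_3]$. The only point needing care is the bookkeeping of the quadratic cross-terms — seeing the triple cancellation of the $c^3x_3^2$-, $cx_1x_3$- and $c^3px_3$-contributions in the second identity, and tracking how the $-p^2$ inserted into $x_3$ by the rightmost map of the first identity is carried through the middle factor and recombined by the leftmost map. Treating $p$ as indivisible, equivalently exploiting that every factor preserves it, is what keeps this short; a brute-force expansion (presumably what a computer-algebra check performs) works equally well.
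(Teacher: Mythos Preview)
Your proposal is correct and follows essentially the same approach as the paper: direct verification exploiting that $p = cx_2 - x_1^2$ is preserved by every factor, together with the locally nilpotent derivation $D = c^2\parder{}{x_1} + 2cx_1\parder{}{x_2}$ with $p \in \ker D$, which is precisely the structure invoked via the proposition in \cite{MR1001475}. The only organisational difference is that you verify the second identity first and derive the first from it, whereas the paper suggests doing the first by hand and appealing to \cite{MR1001475} for the second; these are interchangeable.
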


One can verify lemma \ref{tamelm2} with Maple or something, or do
the first equality by hand, and use
the proposition in \cite{MR1001475}, with 
$$
D = c^2\parder{}{x_1} + 2cx_1\parder{}{x_2} \qquad \mbox{and} \qquad  
c x_2 - x_1^2 \in \ker D
$$
to get a proof of the second equality.


\bibliographystyle{cubhmgrk2j}
\bibliography{cubhmgrk2j}

\end{document}